\title{Threshold Progressions in a Variety of Covering and Packing Contexts}
\author{Anant Godbole \\
East Tennessee State University \\
{\tt godbolea@etsu.edu} \and
Thomas Grubb \\
University of California, San Diego \\
{\tt tgrubb@ucsd.edu}\and
Kyutae Han\\
University of California, Los Angeles\\
{\tt kyutae.paul.han@math.ucla.edu}\and
Bill Kay\\
Oak Ridge National Laboratories\\
{\tt kaybw@ornl.gov}}
\begin{document}
\def\tv{d_{\rm TV}}
\def\P{\rm Poi}
\def\l{\lambda}
\def\p{\mathbb P}
\def\v{\mathbb V}
\def\a{\alpha}
\def\e{{\mathbb E}}
\def\lr{\left(}
\def\rr{\right)}
\def\lc{\left\{}
\def\rc{\right\}}
\def\cf{{\cal F}}
\def\cc{{\cal C}}
\def\ca{{\cal A}}
\def\cb{{\cal B}}
\def\cl{{\cal L}}
\def\bfx{\bf x}
\def\nkt{{n-t}\choose{k-t}}
\def\nt{{n \choose t}}
\newcommand{\marginal}[1]{\marginpar{\raggedright\scriptsize #1}}
\newcommand\numberthis{\addtocounter{equation}{1}\tag{\theequation}}
\newcommand{\x}{\mathbf{x}}
\newcommand{\y}{\mathbf{y}}
\newcommand{\red}{\color{red}}
\newcommand{\blue}{\color{blue}}
\newcommand{\beq}{\begin{equation}}
\newcommand{\eeq}{\end{equation}}
\newcommand{\ba}{\mathbf{a}}
\newcommand{\bb}{\mathbf{b}}
\newcommand{\bc}{\mathbf{c}}
\newcommand{\bd}{\mathbf{d}}
\newcommand{\be}{\mathbf{e}}
\newcommand{\boldf}{\mathbf{f}}
\newcommand{\bg}{\mathbf{g}}
\newcommand{\bh}{\mathbf{h}}
\newcommand{\bi}{\mathbf{i}}
\newcommand{\bj}{\mathbf{j}}
\newcommand{\bk}{\mathbf{k}}
\newcommand{\bl}{\mathbf{l}}
\newcommand{\bm}{\mathbf{m}}
\newcommand{\bn}{\mathbf{n}}
\newcommand{\bo}{\mathbf{o}}
\newcommand{\bp}{\mathbf{p}}
\newcommand{\bq}{\mathbf{q}}
\newcommand{\br}{\mathbf{r}}
\newcommand{\bs}{\mathbf{s}}
\newcommand{\bt}{\mathbf{t}}
\newcommand{\bu}{\mathbf{u}}
\newcommand{\bv}{\mathbf{v}}
\newcommand{\bw}{\mathbf{w}}
\newcommand{\bx}{\mathbf{x}}
\newcommand{\by}{\mathbf{y}}
\newcommand{\bz}{\mathbf{z}}
\def\bfa{({\bf a}_1,\ldots,{\bf a}_{g+1})}
\newtheorem{thm}{Theorem}[section]
\newtheorem{cor}[thm]{Corollary}
\newtheorem{lem}[thm]{Lemma}
\newtheorem{claim}[thm]{Claim}
\newtheorem{prop}[thm]{Proposition}
\newtheorem{conj}[thm]{Conjecture}
\newtheorem{fact}[thm]{Fact}
\maketitle
\let\thefootnote\relax\footnote{Notice: This manuscript has been authored in part by UT-Battelle, LLC under Contract No. DE-AC05-00OR22725 with the U.S. Department of Energy. The United States Government retains and the publisher, by accepting the article for publication, acknowledges that the United States Government retains a non-exclusive, paid-up, irrevocable, world-wide license to publish or reproduce the published form of this manuscript, or allow others to do so, for United States Government purposes. The Department of Energy will provide public access to these results of federally sponsored research in accordance with the DOE Public Access Plan (http://energy.gov/downloads/doe-public-access-plan).}
\newpage
\begin{abstract} Using standard methods (due to Janson, Stein-Chen, and Talagrand) from probabilistic combinatorics, we explore the following
general theme: As one progresses from each member of a family of objects
$\ca$ being ``covered" by at most one object in a random collection $\cc$, to being
covered at most $\lambda$ times, to being covered at least once, to being covered at least $\lambda$ times, a hierarchy of thresholds emerge.    
  We will then see how such results vary according to the context, and level of dependence introduced.  
Examples will be from extremal set theory; combinatorics; and additive number theory.

\end{abstract}

\section{Introduction and Motivating Example} 
Suppose $\ca$ is a family of combinatorial objects which can be covered (in some sense) by members of a family $\mathcal{G}$. Suppose further that $\cc\subseteq \mathcal{G}$ is a random subset of $\mathcal{G}$ in which each element of $\mathcal{G}$ is selected for membership in $\cc$ with probability $p$. We examine the following in a variety of contexts:

For which regimes of $p$ do we have with high probability (whp) or with low probability (wlp)?
\begin{enumerate}
\item Each member of $\ca$ is covered by at most one member of $\cc$ (i.e., $\ca$ packs into $\cc$). 
\item Each member of $\ca$ is covered by at most $\lambda$ members of $\cc$ (i.e., $\ca$ $\lambda$-packs into $\cc$).
\item Each member of $\ca$ is covered by at least one member of $\cc$ (i.e. $\ca$ is covered by $\cc$).
\item Each member of $\ca$ is covered by at least $\lambda$ members of $\cc$ (i.e., is $\lambda$-covered by $\cc$). 
\end{enumerate}
Further, in which contexts can we establish a sharp probabilistic threshold?

In this section, we introduce results from the classical theory of the random allocation of balls to boxes.  We then see how and to what extent the results apply to situations such as coverage of sets by other sets (Section 2); of integers by $h$-sets of integers (Section 3); of $n$-permutations by $(n+1)$-permutations (Section 4); and of sets via unions of other sets (Section 5).  It is our hope that the paradigm that emerges will be explored by other researchers in a variety of other contexts.

Suppose that we are trying to pack balls in boxes so that each box contains at most one ball.  This is the so-called ``birthday problem", and it is well-known, e.g.,  \cite{bh}, that if we throw $n$ balls into $N$ boxes uniformly at random, then the threshold for the property to hold whp/wlp is $n=\sqrt{N}$. 
  Throughout this paper, we will usually not mention behavior at the threshold, but this can be derived in most cases.  The Stein-Chen method (\cite{bhj}) is often used to exhibit the threshold $n=N^{\lambda/(\lambda+1)}$ for the property ``each box contains at most $\lambda$ balls", but we rederive this next using    {\em Talagrand's inequality}, Theorem 7.7.1 in \cite{as}.

\begin{thm}[Talagrand's Inequality \cite{as}]\label{Talagrand}
Let $X\ge0$ be determined by $n$ random trials.  We say that $X$ is {\em Lipschitz} if changing the outcome of any one trial affects the value of $X$ by at most $1$. We say that $X$ is {\em $f$-certifiable} if the event $\{X \geq s\}$ can be verified by revealing the outcomes of $f(s)$ trials. Given an $f$-certifiable Lipschitz $X$, for all $b$, $t$, we have:

\[
\p[X \leq b-t\sqrt{f(b)}]\p[X \geq b] \leq e^{-t^2/4}.
\]
\end{thm}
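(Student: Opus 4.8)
\emph{Proof sketch.} Since this inequality is quoted from \cite{as}, I will only indicate the route the argument takes. The statement above is a packaged corollary of Talagrand's concentration inequality for the \emph{convex distance}. Work in a product probability space $\Omega = \prod_{i=1}^{n}\Omega_i$ (so the $n$ trials are independent), and for measurable $A \subseteq \Omega$ and $x \in \Omega$ set
\[
d_T(A,x) \;=\; \min_{\nu \,\in\, \mathrm{conv}\{\mathbf 1[x\neq z]\,:\,z\in A\}} |\nu|_2 \;=\; \sup_{\alpha\ge 0,\ |\alpha|_2=1}\ \inf_{z\in A}\ \sum_{i:\, x_i\neq z_i}\alpha_i,
\]
the two forms agreeing by LP duality. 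The engine is the moment bound $\e\!\left[e^{\,d_T(A,X)^2/4}\right]\le 1/\p[X\in A]$, from which Markov's inequality yields $\p[X\in A]\cdot\p[d_T(A,X)\ge t]\le e^{-t^2/4}$. So the plan is: (i) prove the moment bound; (ii) deduce the stated theorem from it.

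For (i) I would induct on $n$. The base case $n=1$ is the elementary inequality $p+(1-p)e^{1/4}\le 1/p$ for $p\in(0,1]$. For the inductive step, split off the last coordinate, write $x=(y,\omega)$, and let $A_\omega=\{y':(y',\omega)\in A\}$ be the slice through $\omega$ and $B=\{y':\exists\,\omega',\ (y',\omega')\in A\}$ the projection onto the first $n-1$ coordinates. The convex-hull description of $d_T$ yields the convexity bound
\[
d_T(A,x)^2 \;\le\; r\,d_T(A_\omega,y)^2 \;+\; (1-r)\,d_T(B,y)^2 \;+\; (1-r)^2 \qquad(0\le r\le 1),
\]
obtained by interpolating near-optimal weight vectors $(u,0)$ for $A_\omega$ and $(v,1)$ for $B$. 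Exponentiating, integrating over the first $n-1$ coordinates, applying H\"older with exponents $1/r$ and $1/(1-r)$, and using the inductive hypothesis for $A_\omega$ and for $B$ gives, with $\theta_\omega:=\p[A_\omega]/\p[B]\le 1$,
\[
\e_{X'}\!\left[e^{\,d_T(A,(X',\omega))^2/4}\right] \;\le\; \frac{1}{\p[B]}\inf_{0\le r\le 1}\theta_\omega^{-r}e^{(1-r)^2/4}\;\le\;\frac{2-\theta_\omega}{\p[B]},
\]
the last step being the one-variable lemma $\inf_{0\le r\le1}\theta^{-r}e^{(1-r)^2/4}\le 2-\theta$. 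Averaging over $\omega$, using $\e_\omega[\p[A_\omega]]=\p[A]$, and finally $w(2-w)\le 1$ with $w=\p[A]/\p[B]$, closes the induction.

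I expect the convexity bound together with the one-variable optimization lemma to be the main obstacle: that pair of facts is exactly where the constant $1/4$ gets pinned down, and setting up the interpolation of the near-optimal weight vectors correctly is the only genuinely delicate point; everything else is bookkeeping.

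Step (ii) is short. Put $A=\{X\le b-t\sqrt{f(b)}\}$ and suppose $x$ has $X(x)\ge b$. Because $X$ is $f$-certifiable, there is an index set $I$ with $|I|=f(b)$ such that every point agreeing with $x$ on $I$ has value $\ge b$. For any $z\in A$ let $\xi$ agree with $x$ on $I$ and with $z$ off $I$; then $X(\xi)\ge b$, while $\xi$ and $z$ differ only on $I\cap\{i:x_i\neq z_i\}$, so the Lipschitz property gives $b\le X(\xi)\le X(z)+|I\cap\{i:x_i\neq z_i\}|\le b-t\sqrt{f(b)}+|I\cap\{i:x_i\neq z_i\}|$, hence $|I\cap\{i:x_i\neq z_i\}|\ge t\sqrt{f(b)}$. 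Taking $\alpha=\mathbf 1_I/\sqrt{f(b)}$ (a unit vector) in the definition of $d_T$ shows $d_T(A,x)\ge t$. Thus $\{X\ge b\}\subseteq\{d_T(A,X)\ge t\}$, and the convex-distance inequality gives $\p[X\le b-t\sqrt{f(b)}]\,\p[X\ge b]\le e^{-t^2/4}$, as claimed.
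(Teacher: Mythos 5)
The paper states Talagrand's inequality as a quoted result from \cite{as} (Theorem 7.7.1) and supplies no proof, so there is no in-paper argument to compare against; your sketch is a correct reconstruction of the standard Alon--Spencer proof. Specifically, you have the right convex distance $d_T$, the exponential moment bound $\e\bigl[e^{d_T(A,X)^2/4}\bigr]\le 1/\p[A]$ proved by induction on coordinates via the interpolation inequality $d_T(A,x)^2 \le r\,d_T(A_\omega,y)^2 + (1-r)\,d_T(B,y)^2 + (1-r)^2$, H\"older with exponents $1/r$ and $1/(1-r)$, the optimization lemma $\inf_{r\in[0,1]}\theta^{-r}e^{(1-r)^2/4}\le 2-\theta$ and the final $w(2-w)\le 1$ step, followed by the reduction of the certifiable-Lipschitz statement to the set form via the unit vector $\mathbf{1}_I/\sqrt{f(b)}$. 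You have accurately flagged the interpolation bound and the one-variable minimization as the only points requiring real work; filling them in is standard, and I see no gap.
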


\begin{thm} \label{Tballs} When $n$ balls are randomly and uniformly distributed in $N$ boxes, then letting $X=X_\lambda$ denote the number of boxes with $\lambda+1$ or more balls,

$$n\ll N^{\lambda/(\lambda+1)}\Rightarrow \p(X=0)\to1,$$
and 
$$n\gg N^{\lambda/(\lambda+1)}\Rightarrow \p(X=0)\to0,$$
where throughout the paper, given $f(n), g(n)\ge0$, we write $f(n)\ll g(n)$ (or $g(n)\gg f(n)$) if $f(n)=o(g(n))$ as $n\to\infty.$
\end{thm}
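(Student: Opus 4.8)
The plan is to prove the two implications by different means; only the second needs Theorem~\ref{Talagrand}. For the subcritical half $n\ll N^{\lambda/(\lambda+1)}\Rightarrow\p(X=0)\to1$, a first--moment estimate suffices: bounding $\p[\text{a fixed box receives}\ge\lambda+1\text{ balls}]\le\binom{n}{\lambda+1}N^{-(\lambda+1)}$ by a union bound over which $\lambda+1$ of the balls are responsible, linearity gives
\[
\e[X]\;\le\;N\binom{n}{\lambda+1}N^{-(\lambda+1)}\;\le\;\frac{n^{\lambda+1}}{(\lambda+1)!\,N^{\lambda}},
\]
which is $o(1)$ exactly when $n=o(N^{\lambda/(\lambda+1)})$; then $\p(X=0)=1-\p(X\ge1)\ge1-\e[X]\to1$ by Markov's inequality.

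For the supercritical half I would apply Theorem~\ref{Talagrand} to $X=X_\lambda$, regarded as a function of the $n$ independent placements. Two facts make it applicable. First, $X$ is \emph{Lipschitz}: reassigning one ball, say from box $a$ to box $b$, lowers the count of $a$ by one and raises that of $b$ by one, so the indicator ``$\ge\lambda+1$ balls'' of box $a$ changes by $0$ or $-1$, that of box $b$ by $0$ or $+1$, and no other box is touched, for a net change of $X$ in $\{-1,0,1\}$. Second, $\{X\ge s\}$ is \emph{$f$-certifiable} with $f(s)=(\lambda+1)s$, since naming $\lambda+1$ balls inside each of $s$ distinct boxes reveals at most $(\lambda+1)s$ placements. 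Writing $m$ for a median of $X$ and applying Theorem~\ref{Talagrand} with $b=m$ and $t=\sqrt{m/(\lambda+1)}$, so that $b-t\sqrt{f(b)}=m-m=0$, the bound $\p(X\ge m)\ge\tfrac12$ gives $\tfrac12\p(X=0)\le\p(X\le0)\,\p(X\ge m)\le e^{-t^2/4}$, i.e.
\[
\p(X=0)\;\le\;2\,e^{-m/(4(\lambda+1))}.
\]

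So it remains only to show $m\to\infty$ whenever $n\gg N^{\lambda/(\lambda+1)}$. (One may first discard the case of bounded $N$: then $X=N\ge1$ for all large $n$, so $\p(X=0)=0$; hence assume $N\to\infty$.) A routine binomial estimate gives $\e[X]=N\,\p[\mathrm{Bin}(n,1/N)\ge\lambda+1]\to\infty$ in this regime --- it is asymptotic to $n^{\lambda+1}/((\lambda+1)!\,N^{\lambda})$ when $n=o(N)$ and is $\Omega(N)$ when $n=\Omega(N)$. To convert $\e[X]\to\infty$ into $m\to\infty$, apply Theorem~\ref{Talagrand} once more, now with $b=a$ for integers $a>m$ and $t$ chosen so that $a-t\sqrt{f(a)}=m$; since $\p(X\le m)\ge\tfrac12$ this yields $\p(X\ge a)\le 2\exp\!\big(-(a-m)^2/(4(\lambda+1)a)\big)$, and summing over $a$ (the ranges $m<a\le 2m$ and $a>2m$ contribute $O(\sqrt m)$ and $O(1)$ respectively) gives $\e[X]=\sum_{a\ge1}\p(X\ge a)\le m+O(\sqrt m)$. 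Thus $m$ cannot stay bounded, so $m\to\infty$ and, by the display above, $\p(X=0)\to0$. The one genuinely non-mechanical step is this last one: $\e[X]\to\infty$ does not by itself force the median to grow, so the concentration furnished by Talagrand (the second application) is really needed to bound $\e[X]-m$; the Lipschitz and certifiability bookkeeping and the binomial asymptotics are otherwise routine.
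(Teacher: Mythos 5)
Your proof is correct, and the overall strategy matches the paper's exactly: the first implication via Markov's inequality on $\e[X]\le N\binom{n}{\lambda+1}N^{-(\lambda+1)}$, and the second via Talagrand with the same Lipschitz bound ($X$ changes by at most $1$ per reassignment) and the same certificate size $f(s)=(\lambda+1)s$, yielding $\p(X=0)\le 2e^{-{\rm Med}(X)/(4(\lambda+1))}$. The one place you diverge is how you pass from $\e[X]\to\infty$ to ${\rm Med}(X)\to\infty$: the paper simply cites Fact~10.1 of Molloy--Reed (median and mean of a certifiable Lipschitz variable differ by $O(\sqrt{(\lambda+1)\e(X)})$), whereas you rederive the needed one-sided bound from scratch by a second application of Talagrand with $b=a$, $t=(a-m)/\sqrt{(\lambda+1)a}$, and summing $\p(X\ge a)$ over $a$ to get $\e[X]\le m+O(\sqrt m)$. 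That is a self-contained and slightly more economical version of the cited fact (one-sided is all that's needed here), and it is essentially the standard proof of that fact, so it buys independence from the reference at the cost of a few extra lines. Your explicit handling of bounded $N$, and your subsequence-style case split $n=o(N)$ versus $n=\Omega(N)$ in place of the paper's explicit lower bound on $\e[X]$ for $N^{\lambda/(\lambda+1)}\ll n\ll N$ followed by monotonicity, are both fine and equivalent in substance.
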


\begin{proof}  The first half is routine and follows from Markov's inequality and  the fact that if $n\ll N^{\lambda/(\lambda+1)}$,
\[\p(X\ge 1)\le\e(X)\le N{n\choose {\lambda+1}}\lr\frac{1}{N}\rr^{\lambda+1}\to0,\]  and hence $\p(X=0) \rightarrow 1$, as claimed. We seek to employ Talagrand's inequality for the second half.  In this case,  altering the location of any one ball can only affect $X$ by at most one (e.g, $X$ is $1$-Lipschitz). Moreover, the event $\{X\ge s\}$ can be certified by the outomes of $s(\lambda+1)$ trials (e.g., $X$ is $s(\lambda+1)$-certifiable), so that taking $b = {\rm Med } (X)$ and $t = \frac{\sqrt{{\rm Med}(X)}}{\sqrt{\lambda + 1}}$ in  Talagrand's inequality  yields
\[\p(X=0)\le 2\exp\lc-\frac{{\rm Med}(X)}{4(\lambda+1)}\rc,\]
where ${\rm Med}(X)$ is a median of $X$.  Since the median and mean of $X$ differ by at most $40\sqrt{(\lambda+1)\e(X)}$ as per Fact 10.1 in \cite{mr}, we see that $\p(X=0)\to0$ whenever $\e(X)\to\infty$. Noting that $X=\sum_{j=1}^NI_j$ where $I_j$ is the indicator variable of the event that the $j$th box has $\geq \l+1$ balls in it, we see that if $ N^{\lambda/(\lambda+1)} \ll n \ll N$, then:


\begin{align*}
\e(X) &=N\p(I_1=1)\\&\geq N\binom{n}{\lambda+1}\left(\frac{1}{N}\right)^{\lambda+1} \left(1- \frac{1}{N}\right)^{n-\lambda-1}\\
&\geq N\frac{(n-\lambda -1)^{\lambda +1}}{(\lambda+1)!}\frac{1}{N^{\lambda+1}} \exp\lc{-\frac{n-\lambda -1}{N} (1+o(1))}\rc\\
&\rightarrow \infty
\end{align*} 
and hence $\p(X=0) \to 0$ as desired. Moreover, by monotonicity $\e(X)$ is a non-decreasing function of $n$, and so if $n $ is even larger, we have $\e(X) \rightarrow \infty$  and hence $\p(X=0) \to 0$, as desired. \hfill \end{proof}


Note that the thresholds in Theorem 1.1 get close to $n=N$ as $\lambda\to\infty$.
It may still be the case, however, that not all boxes will have a ball in them if $n\gg N$, which leads us to the covering questions.  It is well known (see, e.g., \cite{bh}) that the expected waiting time for each of the boxes to be covered by at least one ball is $N(\ln N+\gamma+o(1)),$ where $\gamma$ is Euler's constant, and that the variance of the waiting time is $\Theta( N^2)$.
Various people, e.g., \cite{h}  have asked about covering each box $\lambda$ or more times. 
Generalizing work of Erd\H os and R\'enyi; and Newman and Shepp, Holst \cite{h} produced the following definitive result:
\begin{thm} (Holst \cite{h})\label{Holst}:  Let $X=X_\lambda$ denote the waiting time until each box has at least $\lambda$ balls. We have: $$\e(X)=N(\ln N+(\lambda-1)
\ln \ln N+\gamma-\ln (\lambda-1)!+o(1)).$$Normalizing by setting
$X^\ast=X/N-\ln N-(\lambda-1)\ln\ln N+\ln(\lambda-1)!$, we have that
$X_1,\ldots, X_\lambda$ are asymptotically independent.  Moreover $$\p(X^\ast \le u)\to\exp\{-e^{-u}\}.$$  
\end{thm}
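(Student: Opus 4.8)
The plan is to Poissonize. Run the allocation in continuous time, letting each of the $N$ boxes receive balls along its own independent rate-$1$ Poisson process; then at time $t$ the counts $N_1(t),\dots,N_N(t)$ are i.i.d.\ $\mathrm{Poi}(t)$, and the $n$th ball overall arrives at a time $T_n$ distributed as $\Gamma(n,1)/N$, with $\e T_n=n/N$ and $\mathrm{Var}\,T_n=n/N^2$. Writing $W_\ell$ for the first continuous time at which every box holds at least $\ell$ balls, we get $W_\ell=\max_{1\le i\le N}S_i^{(\ell)}$, where $S_i^{(\ell)}$, the time of the $\ell$th arrival at box $i$, is i.i.d.\ over $i$ with a $\Gamma(\ell,1)$ law; note $\{S_1^{(\ell)}>t\}=\{N_1(t)\le\ell-1\}$. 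The integer waiting time is $X_\ell=M(W_\ell)$, where $M(t)$ is the number of balls thrown by time $t$; since $M(t)\sim\mathrm{Poi}(Nt)$ and $W_\ell\asymp\ln N$, de-Poissonization gives $X_\ell/N=W_\ell+O_{\p}(\sqrt{\ln N/N}\,)$, an error negligible next to the $O(1)$ post-centering scale of $W_\ell$, so it suffices to prove the limit theorem for the maxima $W_\ell$.

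For the joint law, fix reals $u_1,\dots,u_\l$ and set $t_\ell=\ln N+(\ell-1)\ln\ln N-\ln(\ell-1)!+u_\ell$, so $t_1<\cdots<t_\l$ for $N$ large. By independence across boxes,
\[
\p\bigl(W_\ell\le t_\ell\ \text{for all }\ell\bigr)=\p\Bigl(\bigcap_{\ell=1}^{\l}\{N_1(t_\ell)\ge\ell\}\Bigr)^{\!N}.
\]
I would estimate the inner probability by applying Bonferroni to the complementary events $A_\ell=\{N_1(t_\ell)\le\ell-1\}$. The Poisson lower-tail asymptotic $\p(\mathrm{Poi}(t)\le\ell-1)\sim e^{-t}t^{\ell-1}/(\ell-1)!$ as $t\to\infty$, evaluated at $t=t_\ell$, yields $N\,\p(A_\ell)\to e^{-u_\ell}$; meanwhile $\p(A_\ell\cap A_{\ell'})=o(1/N)$ for $\ell<\ell'$, since conditionally on box $1$ already lagging at the earlier time $t_\ell$ it would then have to gain only $O(1)$ balls over the interval $(t_\ell,t_{\ell'}]$ of length $\sim(\ell'-\ell)\ln\ln N\to\infty$, a large-deviation event of probability $o(1)$. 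Hence the inner probability is $1-N^{-1}\sum_\ell e^{-u_\ell}+o(N^{-1})$, its $N$th power tends to $\prod_{\ell=1}^{\l}\exp(-e^{-u_\ell})$, and so the centered $W_\ell$ --- equivalently the $X_\ell^\ast$ --- converge jointly to independent standard Gumbels; the coordinate $\ell=1$ is exactly the stated $\p(X^\ast\le u)\to\exp\{-e^{-u}\}$, and the full vector statement is the asymptotic independence. (Equivalently one may run a Stein--Chen Poisson approximation for the number of deficient boxes.)

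To get the mean, the same tail estimates (together with a matching doubly-exponential bound on the lower tail of $X_\ell$) give a bound on $\p(|X_\ell^\ast|>u)$ uniform in $N$, hence uniform integrability; combined with $X_\ell^\ast\Rightarrow$ Gumbel and the fact that the standard Gumbel has mean $\gamma$, this gives $\e(X_\ell^\ast)\to\gamma$, i.e.\ $\e(X_\ell)=N(\ln N+(\ell-1)\ln\ln N+\gamma-\ln(\ell-1)!+o(1))$. The step I expect to be the real obstacle is the de-Poissonization: pushing both the limit law and, more delicately, the $o(1)$ error term in the mean from the continuous-time maximum $W_\ell$ back to the integer variable $X_\ell$ requires quantitative control of the coupling between elapsed Poisson time and number of balls thrown --- sharp enough that a ball-count fluctuation of order $\sqrt{N\ln N}$ is provably swamped by the $N\ln N$ scale, uniformly in the regime that matters. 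By comparison, the extreme-value computation for maxima of i.i.d.\ Gamma variables and the Bonferroni bookkeeping are routine.
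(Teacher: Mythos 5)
The paper does not prove Theorem~\ref{Holst}; it is quoted verbatim from Holst~\cite{h} and only its consequence, Corollary~\ref{thresh:Holst}, is derived in-text. So there is no in-paper argument for you to match or diverge from, and I can only assess your outline on its own terms.

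On those terms it is sound, and it is essentially the standard route in the occupancy literature (and close in spirit to Holst's own): embed the allocation in continuous time so that the box counts become i.i.d.\ Poisson processes, reduce the integer waiting times $X_\ell$ to the $\Gamma(\ell,1)$ record maxima $W_\ell$, derive the joint Gumbel limit by inclusion--exclusion on the deficient-box events $A_\ell$, and then de-Poissonize. Your computations $N\,\p(A_\ell)\to e^{-u_\ell}$ and $\p(A_\ell\cap A_{\ell'})=o(1/N)$ are correct and, with $\lambda$ fixed, Bonferroni does the rest; the resulting $\prod_\ell\exp(-e^{-u_\ell})$ is precisely the asymptotic-independence claim plus the marginal Gumbel law. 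You are also right that the mean is where the genuine technical burden sits: weak convergence plus uniform integrability of $X_\ell^\ast$ transfers $\e[X_\ell^\ast]\to\gamma$, and the needed upper-exponential and lower-doubly-exponential tails come from the same Poisson estimates, but pushing them through the de-Poissonization requires some care since $M(\cdot)$ and $W_\ell$ are not independent ($W_\ell$ is a stopping time of the very arrival process that $M$ counts); one typically handles this by comparing at deterministic times on a grid, or by computing $\e[X_\ell]=\sum_n\p(X_\ell>n)$ and Poissonizing termwise. Two small slips worth fixing: it is the $\ell=\lambda$ coordinate of your joint limit (not $\ell=1$) that reproduces the displayed one-dimensional statement for $X^\ast=X_\lambda^\ast$, and the de-Poissonization error on the post-centering $O(1)$ scale should be stated as $N^{-1}(M(W_\ell)-NW_\ell)=O_\p(\sqrt{\ln N/N})$, which is what you mean but is cleaner to justify once the nonindependence caveat above is in view.
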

Theorem~\ref{Holst} implies the following threshold result:

\begin{cor}\label{thresh:Holst}
Let $r = r(n)$ be arbitrary. Let $X=X_\lambda$ denote the waiting time until each box has at least $\lambda$ balls. We have:

\[
r \rightarrow +\infty \Rightarrow \p(X\le N\lc\ln N+(\lambda-1)\ln \ln N+r\rc)\to 1,
\]
and
\[
r \rightarrow -\infty \Rightarrow 
\p(X\le N\lc\ln N+(\lambda-1)\ln \ln N+r\rc)\to 0.\]

\end{cor}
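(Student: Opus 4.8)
The plan is to reduce the corollary to Holst's distributional limit (Theorem~\ref{Holst}) by an exact rescaling of events, followed by a sandwiching argument that upgrades the pointwise convergence of a distribution function to the regime in which the evaluation point drifts off to $\pm\infty$.

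First I would translate the event. Dividing through by $N$, the event $\{X \le N(\ln N + (\lambda-1)\ln\ln N + r)\}$ is literally the event $\{X/N - \ln N - (\lambda-1)\ln\ln N + \ln(\lambda-1)! \le r + \ln(\lambda-1)!\}$, i.e. $\{X^\ast \le u\}$ with $u := r + \ln(\lambda-1)!$, in the notation of Theorem~\ref{Holst}. Note this is an exact equivalence of events, so no $o(1)$ terms are introduced. Hence it suffices to prove $\p(X^\ast \le u) \to 1$ whenever $r \to +\infty$ (so $u \to +\infty$) and $\p(X^\ast \le u) \to 0$ whenever $r \to -\infty$ (so $u \to -\infty$).

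For the first case, fix an arbitrary constant $M$. Since $u \to +\infty$ we have $u \ge M$ for all large $N$, and since $t \mapsto \p(X^\ast \le t)$ is nondecreasing, $\p(X^\ast \le u) \ge \p(X^\ast \le M)$ for all large $N$; combined with $\p(X^\ast \le M) \to \exp\{-e^{-M}\}$ from Theorem~\ref{Holst} this gives $\liminf \p(X^\ast \le u) \ge \exp\{-e^{-M}\}$. Letting $M \to +\infty$ forces the liminf to $1$. The second case is symmetric: with $M$ fixed we have $u \le M$ eventually, so $\p(X^\ast \le u) \le \p(X^\ast \le M) \to \exp\{-e^{-M}\}$, whence $\limsup \p(X^\ast \le u) \le \exp\{-e^{-M}\}$, and letting $M \to -\infty$ drives the bound to $0$.

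The only genuine subtlety, and the step I would be most careful to spell out, is that Theorem~\ref{Holst} is a convergence-in-distribution statement, a priori valid only at a fixed argument $u$, whereas here $u = u(n)$ is moving; sandwiching by a fixed constant $M$ (and using that the Gumbel limit $\exp\{-e^{-u}\}$ is continuous, so no atoms obstruct the argument) is exactly what bridges that gap, and the monotonicity of the distribution function is what makes the sandwich work. Everything else is bookkeeping.
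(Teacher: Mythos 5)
Your proof is correct and follows the same route as the paper: rewrite the event exactly as $\{X^\ast \le r + \ln(\lambda-1)!\}$ and invoke Theorem~\ref{Holst}. The paper compresses the final step to the informal "$\p(X^\ast \le r + O(1)) \to \exp\{-e^{-(r+O(1))}\}$," whereas you spell out the sandwiching (monotonicity of the CDF plus continuity of the Gumbel limit) needed to handle the drifting argument $u(n)$ rigorously; that extra care is sound and welcome, but it is the same argument.
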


\begin{proof}
Let $X^\ast$ be as in Theorem~\ref{Holst}. Then we have:
\begin{align*}
\p(X \leq N(\ln n + (\lambda-1) \ln \ln n + r)) & = \p(X^\ast \leq r+O(1))\\
& \rightarrow \exp \lc -e^{-(r+O(1))} \rc,
\end{align*}
which establishes the desired result.\hfill\end{proof}

Of particular note is the linearity (in $\ln\ln N$) for coverings beyond the first, showing that an additional iterated logarithmic fraction suffices for each subsequent covering (which are asymptotically independent!)  We shall show that many of these features stay intact even as dependence is introduced into the covering context.  The main results of this paper are Theorems 2.3 and 2.4 on combinatorial designs; Theorem 3.4 on Sidon sets; Theorems 4.5 and 4.6 on permutations; and Theorem 5.1 on weakly union free set systems.  We emphasize that while our proofs use techniques that are now considered a standard part of the random combinatorial methods toolkit, there is a great deal of variety in which the ``balls in boxes paradigm" is shown to be valid, further begging some of the Open Questions mentioned in Section 6.

\section {Combinatorial Designs} What is the smallest number $C_1(n,k,t)$ of $k$-sets of $[n]$ that must be picked so that each $t$-set is contained in at at least one $k$-set?  In this area, extremal behavior has been well-studied:  The Erd\H os-Hanani Conjecture, its first proof by R\"odl (see \cite{as}), and the branching processes/greedy random algorithm proof of Spencer \cite{s} are all well-known.  The result is that the obvious lower bound of 
$$C_1(n,k,t)\ge\frac{{n\choose t}}{{k\choose t}}=\frac{{n\choose k}}{{{n-t}\choose {k-t}}}$$ is asymptotically correct as $n\to\infty$ with $k,t$ being held fixed.  These statements get reversed if we study the packing problem of having each $t$ set being contained in at most one $k$-set.  
We also have the general upper bound of Erd\H os and Spencer \cite{es}:  
$$C_1(n,k,t)\le \frac{{n\choose t}}{{k\choose t}}\lr1+\ln {k\choose t}\rr,$$
and the generalization from \cite{gtv}, which states that  $$C_\lambda(n,k,t)\le \frac{{n\choose t}}{{k\choose t}}\lr\ln(\lambda-1)!+\ln {k\choose t}+(\lambda-1)\ln\ln{k\choose t}\rr,$$ where $C_\lambda(n,k,t)$ is the smallest number of of $k$-sets of $[n]$ that must be picked so that each $t$-set is contained in at least $\lambda$ $k$-sets.

  Theorem~\ref{Tballs} was stated as a threshold result with respect to the cardinality of selected sets. As in \cite{gj} and \cite{gjlr}, frequently it is convenient to restate such results as probabilistic thresholds, which we will do throughout this paper, remarking that the so-called cardinality threshold results often go hand in hand.

We have the following result from \cite{gj}:

\begin{thm}\label{TDesign}
Let $r = r(n)$ be arbitrary. Let $\cc\subseteq \binom{[n]}{k}$ be a random subset of $\binom{[n]}{k}$ in which each element of $\binom{[n]}{k}$ is selected for membership in $\cc$ with probability $p:=\frac{1}{\binom{n-t}{k-t}} \left (\ln \binom{n}{t} +r \right)  $. Let $X$ denote the number of elements of $\binom{[n]}{t}$ which are not subsets of any member of $\cc$.   We have:

\[
r \rightarrow + \infty  \Rightarrow \p(X = 0) \to 1,
\]
and
\[ 
r \rightarrow - \infty\Rightarrow \p(X = 0) \to 0.\]

\end{thm}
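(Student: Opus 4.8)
The plan is to use a first-moment argument for the "wlp" direction and a second-moment (or Stein--Chen / Janson) argument for the "whp" direction, exploiting the fact that $X = \sum_{T \in \binom{[n]}{t}} I_T$, where $I_T$ is the indicator of the event that the $t$-set $T$ is not contained in any selected $k$-set. For a fixed $T$, the $k$-sets containing $T$ number exactly $\binom{n-t}{k-t}$, and each is excluded from $\cc$ independently with probability $1-p$, so $\p(I_T = 1) = (1-p)^{\binom{n-t}{k-t}}$. With the given choice of $p = \frac{1}{\binom{n-t}{k-t}}\left(\ln\binom{n}{t} + r\right)$, we get $\e(X) = \binom{n}{t}(1-p)^{\binom{n-t}{k-t}} = \binom{n}{t}\exp\{-\binom{n-t}{k-t}p(1+o(1))\} = \binom{n}{t}\exp\{-(\ln\binom{n}{t}+r)(1+o(1))\}$, which tends to $0$ when $r \to +\infty$ and to $\infty$ when $r \to -\infty$ (here one must be slightly careful that $p \to 0$, so $k, t$ fixed is being used, and that the $(1+o(1))$ in the exponent does not swamp $r$; since $\binom{n-t}{k-t}p = \Theta(\ln n)$ and $-\ln(1-p) = p + O(p^2)$ with $p^2 \cdot \binom{n-t}{k-t} = O((\ln n)^2 / \binom{n-t}{k-t}) = o(1)$, this is fine). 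By Markov's inequality, $\p(X \geq 1) \leq \e(X) \to 0$ in the first case, giving $\p(X=0) \to 1$.

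For the second direction, $r \to -\infty$, I would bound $\p(X = 0)$ away from $1$ via the second moment method: $\p(X = 0) \leq \frac{\v(X)}{\e(X)^2} = \frac{\e(X^2) - \e(X)^2}{\e(X)^2}$, so it suffices to show $\e(X^2)/\e(X)^2 \to 1$. Expanding, $\e(X^2) = \sum_{T, T'} \p(I_T = 1, I_{T'} = 1)$, and one splits the sum according to $|T \cap T'| = i$ for $i = 0, 1, \ldots, t$. When $|T \cap T'| = i < t$, the sets of $k$-sets containing $T$ and containing $T'$ overlap in those $k$-sets containing $T \cup T'$, of which there are $\binom{n - (2t - i)}{k - (2t-i)}$; by inclusion-exclusion on the "bad" $k$-sets, $\p(I_T = 1, I_{T'}=1) = (1-p)^{2\binom{n-t}{k-t} - \binom{n-2t+i}{k-2t+i}}$. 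The key estimate is that the correlation correction factor $(1-p)^{-\binom{n-2t+i}{k-2t+i}}$ is $\exp\{O(p \cdot n^{k - 2t + i})\}$, and since for $i < t$ we have $k - 2t + i \leq k - t - 1$, this is $\exp\{O((\ln n) \cdot n^{-1})\} = 1 + o(1)$, uniformly. Combined with the fact that the number of pairs with $|T \cap T'| = i$ is $O(n^{2t - i})$ and hence (for $i \geq 1$) of lower order than $\binom{n}{t}^2 \sim n^{2t}/(t!)^2$, the off-diagonal-with-$i\geq 1$ terms and the $i = t$ diagonal term are both $o(\e(X)^2)$ provided $\e(X) \to \infty$, which holds here. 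Thus $\e(X^2)/\e(X)^2 \to 1$ and $\p(X = 0) \to 0$.

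The main obstacle I anticipate is bookkeeping the dependence structure carefully enough in the second-moment computation: one must track that the "overlap in $k$-sets containing $T \cup T'$" correction stays $1+o(1)$ uniformly over all intersection patterns, and handle the borderline contribution from pairs with large intersection ($i$ close to $t$) where the count $n^{2t-i}$ is only polynomially smaller than $n^{2t}$ but still beaten by the requirement $\e(X) \to \infty$. An alternative that sidesteps some of this is to invoke the Stein--Chen method (as the paper advertises): since $X$ is a sum of indicators with local dependence — $I_T$ and $I_{T'}$ are independent precisely when no $k$-set contains both $T$ and $T'$, i.e. when $|T \cap T'| < 2t - k$ — one can bound $\tv(X, \P(\e X))$ by the standard $b_1 + b_2$ terms and conclude $\p(X = 0) \to e^{-\e(X)} + o(1) \to 0$; this also recovers a Poisson limit at the threshold window, matching the $\exp\{-e^{-u}\}$ flavor of Corollary~\ref{thresh:Holst}. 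Either route works; I would present the second-moment version for self-containedness and remark on the Stein--Chen refinement.
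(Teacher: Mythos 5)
The paper does not contain a proof of Theorem~\ref{TDesign}: it is stated as an imported result from \cite{gj} (Godbole and Janson, ``Random covering designs''), so there is no in-paper argument to compare against. Your proof is correct, however. For the first-moment direction, with $M:=\binom{n-t}{k-t}$ each $t$-set lies in exactly $M$ of the $k$-sets, giving $\e(X)=\binom{n}{t}(1-p)^M$; since $Mp^2=O\!\left((\ln n)^2/M\right)=o(1)$ the exponent is $-Mp(1+o(1))=-(\ln\binom{n}{t}+r)(1+o(1))$, so $\e(X)\to 0$ when $r\to+\infty$ and Markov finishes. For the converse, your second-moment computation is sound: for $|T\cap T'|=i<t$ the correction factor $(1-p)^{-R_i}$ with $R_i=\binom{n-2t+i}{k-2t+i}\le\binom{n-t-1}{k-t-1}$ satisfies $pR_i=O(\ln n/n)=o(1)$ uniformly, so $\e(X^2)=\e(X)^2(1+o(1))+O(\e(X))$, and Chebyshev yields $\p(X=0)\to 0$ whenever $\e(X)\to\infty$, which holds for $r\to-\infty$.

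A comparison worth making: the method of the cited source \cite{gj}, and the method the present paper uses for the $\lambda$-covering generalization (Theorem~\ref{TGDesign}), is the Stein--Chen coupling of Lemma~\ref{SC}, not the second-moment method. You anticipate this alternative at the end of your proposal. For the bare zero--one statement your Chebyshev route is shorter and self-contained. The Stein--Chen route costs more bookkeeping but delivers the Poisson approximation $\p(X=0)=e^{-\e(X)}+o(1)$, hence the sharper limit $\p(X=0)\to\exp\{-e^{-r}\}$ across the critical window (matching the shape seen in Corollary~\ref{thresh:Holst}); this is why the paper adopts it for $\lambda\ge 2$, where one wants the $\ln\ln$ correction term pinned down exactly. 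Both arguments are valid; the tradeoff is the limiting distribution versus brevity.
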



In this section we offer two results  which complement Theorem~\ref{TDesign}; a $\lambda$-packing threshold (Theorem~\ref{PDesign}) and a $\lambda$-covering threshold (Theorem~\ref{TGDesign}). First, however, we mention the following  simplified version of Lemma A.2.5 in \cite{bhj}, which deals with tail sums of the binomial distribution, and which will be used frequently through the paper.   

\begin{lem}\label{tails}
Let $p = p_n$ be arbitrary. We have:

\[
np  \rightarrow 0   \Rightarrow \sum_{j = t_0}^{t_1} \binom{n}{j} p^j(1-p)^{n-j} =\binom{n}{t_0} p^{t_0}(1-p)^{n-t_0}(1+o(1)),
\]
for any fixed $0 \leq t_0 < t_1 \leq n$.   Moreover, for $0 \leq t_0 < t_1 =O(1)$, 
\[ 
np  \rightarrow \infty   \Rightarrow \sum_{j = t_0}^{t_1} \binom{n}{j} p^j(1-p)^{n-j} =\binom{n}{t_1} p^{t_1}(1-p)^{n-t_1}(1+o(1)) .\]

\end{lem}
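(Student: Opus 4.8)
The plan is to treat the two halves separately, in each case comparing consecutive terms of the sum to identify which term dominates. For the first half, suppose $np\to0$. I would look at the ratio of the $(j+1)$-st term to the $j$-th term in the sum, namely
\[
\frac{\binom{n}{j+1}p^{j+1}(1-p)^{n-j-1}}{\binom{n}{j}p^{j}(1-p)^{n-j}} = \frac{(n-j)p}{(j+1)(1-p)}.
\]
Since $t_1 = O(1)$ is fixed and $np\to0$, this ratio is $o(1)$ uniformly over the finitely many values $t_0\le j<t_1$; in particular it is eventually bounded above by, say, $1/2$. Hence the sum is bounded by a geometric series with first term $\binom{n}{t_0}p^{t_0}(1-p)^{n-t_0}$ and ratio $o(1)$, so the $j=t_0$ term dominates and the sum equals that term times $(1+o(1))$. (Here I am using that $t_1-t_0$ is a fixed constant, so there is no issue with summing only finitely many ratios.)

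For the second half, suppose $np\to\infty$ but still $t_1=O(1)$. Now the same ratio $\frac{(n-j)p}{(j+1)(1-p)}\to\infty$ for each fixed $j<t_1$, so consecutive terms grow without bound and the last term $j=t_1$ dominates. Reversing the indexing, I would bound the sum above by the $j=t_1$ term times $\sum_{i\ge0}(\text{ratio})^{-i}$, a convergent geometric series with ratio $o(1)$, giving the sum as $\binom{n}{t_1}p^{t_1}(1-p)^{n-t_1}(1+o(1))$. The hypothesis $t_1=O(1)$ is again what lets me pass from ``each individual ratio tends to a limit'' to ``the relevant geometric bound holds uniformly,'' since only finitely many ratios are involved.

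I do not expect a serious obstacle here, as this is essentially the standard observation that a binomial tail is governed by its extreme term when the terms are monotone in the relevant range; the only point requiring care is making sure the comparison of consecutive terms is uniform over the summation range, which is guaranteed by $t_1 = O(1)$ (and note the first half does not even need $t_1$ bounded, only $np\to0$ and $t_1\le n$, since then the ratio $\frac{(n-j)p}{(j+1)(1-p)}\le np/(1-p)\to 0$ for every $j\ge0$). One can alternatively cite Lemma A.2.5 of \cite{bhj} directly; the above is the short self-contained argument.
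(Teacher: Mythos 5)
Your proposal is correct. Worth noting up front: the paper does not supply a proof of this lemma at all; it simply states it as ``the following simplified version of Lemma~A.2.5 in \cite{bhj}'' and leaves the verification to the cited reference. So you cannot be compared against an in-paper argument, and your self-contained ratio-of-consecutive-terms proof is a genuine (and welcome) addition rather than a rederivation. Both halves of your argument are sound: the ratio $(n-j)p/((j+1)(1-p))$ is at most $np/(1-p)\to 0$ uniformly in $j$ when $np\to0$, so the sum is pinched between its first term and that term times a geometric series with ratio $o(1)$; and when $np\to\infty$ with $t_1=O(1)$ the same ratio tends to $\infty$ for each of the finitely many $j<t_1$, so the $j=t_1$ term dominates by the mirror-image bound. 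One small cosmetic point: in the first half you open by invoking ``$t_1=O(1)$ is fixed'' and then note parenthetically that this hypothesis is not actually needed there; it would be cleaner to lead with the uniform bound $np/(1-p)\to 0$ and dispense with the boundedness of $t_1$ from the start for that half (the statement's word ``fixed'' does give it to you, but your stronger observation is correct and could be stated as such). Also, strictly speaking the successive ratios are not all equal, so ``geometric series'' should be read as ``majorized by a geometric series with ratio equal to the maximum of the $o(1)$ ratios,'' which is fine because there are only finitely many of them; you essentially say this, but it is worth being explicit.
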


Informally, Lemma~\ref{tails} provides conditions under which a cumulative binomial sum can be well-approximated by its first (or last) included term.  We are now ready to state and prove Theorem~\ref{PDesign}

\begin{thm}\label{PDesign}
Let $\lambda \geq 1$. Let $\cc\subseteq \binom{[n]}{k}$ be a random subset of $\binom{[n]}{k}$ in which each element of $\binom{[n]}{k}$ is selected for membership in $\cc$ with probability $p$. Let $X= X_\lambda$ denote the number of elements of $\binom{[n]}{t}$ which are  subsets of at least $\lambda+1$ members  of $\cc$.  We have:
 
\[
p  \ll \frac{1}{n^{(k-t) + t/(\lambda+1)}} \Rightarrow \p(X = 0) \to 1,
\]
and
\[ 
p  \gg \frac{1}{n^{(k-t) + t/(\lambda+1)}} \Rightarrow \p(X = 0) \to 0.\]

\end{thm}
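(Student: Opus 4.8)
The plan is to follow the proof of Theorem~\ref{Tballs} almost verbatim: Markov's inequality handles the $\p(X=0)\to1$ direction, and Talagrand's inequality (Theorem~\ref{Talagrand}) handles $\p(X=0)\to0$. The one genuinely new feature is the choice of trials. Here the natural $\binom{n}{k}$ trials are the indicators ``is this $k$-set a member of $\cc$'', and changing one such trial alters the covering multiplicity of exactly the $\binom{k}{t}$ many $t$-sets contained in that $k$-set, so the relevant Lipschitz constant is the \emph{constant} $\binom{k}{t}$, not $1$. Throughout, write $X=\sum_{T}I_T$ over $T\in\binom{[n]}{t}$, where $I_T$ indicates that $T$ lies in at least $\lambda+1$ members of $\cc$; for fixed $T$ the number of members of $\cc$ containing $T$ is distributed as $\mathrm{Bin}(M,p)$ with $M:=\binom{n-t}{k-t}=(1+o(1))\,n^{k-t}/(k-t)!$.

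For the first direction, a union bound over $(\lambda+1)$-subsets gives $\e I_T\le\binom{M}{\lambda+1}p^{\lambda+1}$, so $\e(X)\le\binom{n}{t}\binom{M}{\lambda+1}p^{\lambda+1}\le\bigl(n^{(k-t)+t/(\lambda+1)}\,p\bigr)^{\lambda+1}$, using $\binom{n}{t}\le n^t$ and $M\le n^{k-t}$; when $p\ll n^{-(k-t)-t/(\lambda+1)}$ this tends to $0$, and $\p(X=0)\to1$ by Markov. For the second direction, first note $\p(X=0)$ is non-increasing in $p$ (couple the random collections so that a larger $p$ yields a superset $\cc$, making $X$ pointwise non-decreasing), so it suffices to treat $p$ with $p\gg n^{-(k-t)-t/(\lambda+1)}$ that \emph{also} satisfies $Mp\to0$; replacing $p$ by $p':=\min\{p,\,n^{-(k-t)-t/(2(\lambda+1))}\}$ does exactly this, since the ``slack'' exponent only shrinks from $t/(\lambda+1)$ to $t/(2(\lambda+1))$ while $Mp'\to0$ is forced. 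In that regime, bounding the binomial tail below by its $j=\lambda+1$ term and using $(1-p)^{M}\to1$ gives $\e I_T=(1+o(1))(Mp)^{\lambda+1}/(\lambda+1)!$, whence $\e(X)$ is of order $\bigl(n^{(k-t)+t/(\lambda+1)}p\bigr)^{\lambda+1}$ and so $\e(X)\to\infty$.

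Now invoke Talagrand: $X$ is $\binom{k}{t}$-Lipschitz as noted, and $\{X\ge s\}$ is certified by naming, for each of $s$ over-covered $t$-sets, $\lambda+1$ members of $\cc$ containing it, so $X$ is $f$-certifiable with $f(s)=(\lambda+1)s$. Rescaling $X$ by $1/\binom{k}{t}$ to make it $1$-Lipschitz and then taking $b=\mathrm{Med}(X)$ with the free parameter of Theorem~\ref{Talagrand} equal to $\sqrt{\mathrm{Med}(X)}\big/\bigl(\binom{k}{t}\sqrt{\lambda+1}\bigr)$ yields $\p(X=0)\le 2\exp\{-\mathrm{Med}(X)/(4\binom{k}{t}^{2}(\lambda+1))\}$, exactly as in the proof of Theorem~\ref{Tballs}. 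Since $\binom{k}{t}$ and $\lambda+1$ are constants, Fact 10.1 in \cite{mr} gives $|\e(X)-\mathrm{Med}(X)|=O(\sqrt{\e(X)})$, so $\e(X)\to\infty$ forces $\mathrm{Med}(X)\to\infty$ and thus $\p(X=0)\to0$. The two expectation estimates are routine; the only steps requiring care are the monotonicity reduction that lands us in the regime $Mp\to0$ (where $\e(X)$ has its clean closed form) without losing $p\gg n^{-(k-t)-t/(\lambda+1)}$, and carrying the constant $\binom{k}{t}$ through Talagrand's inequality and the mean--median comparison. I expect no real obstacle: once the trials and the Lipschitz constant are pinned down, this is the balls-in-boxes argument of Theorem~\ref{Tballs} with that extra constant along for the ride.
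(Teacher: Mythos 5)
Your proposal is correct and follows essentially the same route as the paper: Markov's inequality (with Lemma~\ref{tails}-type tail estimates) for the first implication, Talagrand's inequality with Lipschitz constant $\binom{k}{t}$ and certifiability $f(s)=(\lambda+1)s$ for the second, and the same monotonicity reduction to the regime $p\binom{n-t}{k-t}\to0$ so that $\e(X)$ has a clean lower bound tending to infinity. The only cosmetic slip is writing $b=\mathrm{Med}(X)$ after rescaling by $1/\binom{k}{t}$ (it should be the median of the rescaled variable), but your choice of the free parameter and the resulting bound $\p(X=0)\le 2\exp\{-\mathrm{Med}(X)/(4\binom{k}{t}^2(\lambda+1))\}$ are exactly right.
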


\begin{proof}

By Markov's inequality and Lemma~\ref{tails}, we have:

\begin{eqnarray}\p(X\ge 1)&\le& \e(X)={n\choose t}\sum_{r=\lambda+1}^{{n-t}\choose{k-t}}{{\nkt}\choose r}p^r(1-p)^{{\nkt}-r}\nonumber\\
&\le& {n\choose t}{{{{n}\choose{k-t}}}\choose{\lambda+1}}p^{\lambda+1}(1-p)^{{\nkt}-\lambda-1}(1+o(1))\nonumber\\ 
&\le& \Lambda_{k,t,\lambda}{n^t}n^{(k-t)(\lambda+1)}p^{\lambda+1}.\end{eqnarray}
where $\Lambda_{k,t,\lambda}$ is a constant. Hence, if $p \ll \frac{1}{n^{(k-t) + t/(\lambda+1)}}$, we have $\p(X \ge 1) \rightarrow 0,$ proving the first part. 
 
  For the second part, we seek to employ Talagrand's inequality. $X$ is $\binom{k}{t}$-Lipschitz, as reflipping the coin to determine membership of any $k$-set in $\cc$ can affect the value of $X$ by at most $\binom{k}{t}$. Moreover, $X$ is $s(\lambda+1$)-certifiable as the event $\{X\ge s\}$ can be certified by the outomes of $s(\lambda+1)$ trials, so that (as in the proof of Theorem~\ref{Holst}), if $\e(X) \rightarrow \infty$, we have $\p(X = 0) \rightarrow 0$. Applying standard inequalities for the expression we have derived for $\e(X)$, we see that
  
  \[ 
  \e(X)\ge\Gamma_{k,t,\lambda}{n^t}n^{(k-t)(\lambda+1)}p^{\lambda+1}\exp\lr-\frac{p}{1+o(1)}{\nkt}\rr(1-o(1)),
  \]
where $\Gamma_{k,t,\lambda}$ is constant. Assume $\frac{1}{n^{(k-t) + t/(\lambda+1)}}\ll p \ll \frac{1}{n^{k-t}}$ so that $p\binom{n-t}{k-t} \rightarrow 0$. For this choice of $p$, $\e(X) \rightarrow \infty$. As in the proof of Theorem~\ref{Tballs} we are done by monotonicity. \hfill
 \end{proof}

Theorem~\ref{TGDesign} extends Theorem~\ref{TDesign} to $\lambda$-coverings for $\l\ge2$:

\begin{thm}\label{TGDesign}
Let $r = r(n)$ be arbitrary. Let $\cc\subseteq \binom{[n]}{k}$ be a random subset of $\binom{[n]}{k}$ in which each element of $\binom{[n]}{k}$ is selected for membership in $\cc$ with probability $p:=\frac{1}{\binom{n-t}{k-t}} \left (\ln \binom{n}{t} +(\lambda -1) \ln \ln \binom{n}{t} +r \right)  $. Let $X$ denote the number of elements of $\binom{[n]}{t}$ which are subsets of at most $\lambda-1$ members of $\cc$.   Then,

\[
r \rightarrow + \infty  \Rightarrow \p(X = 0) \to 1,
\]
and
\[ 
r \rightarrow - \infty\Rightarrow \p(X = 0) \to 0.\]
\end{thm}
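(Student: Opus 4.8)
The plan is to realise $X$ as a sum of indicators over $t$-sets and combine a first-moment bound with a second-moment bound, in the spirit of the proof of Theorem~\ref{Tballs}. For $T\in\binom{[n]}{t}$ let $D_T$ be the number of members of $\cc$ containing $T$. Exactly $M:=\binom{n-t}{k-t}$ of the $k$-sets contain $T$, so $D_T\sim\mathrm{Bin}(M,p)$ with $\e(D_T)=Mp=:\mu=\ln\binom{n}{t}+(\lambda-1)\ln\ln\binom{n}{t}+r$, and $X=\sum_{T}\mathbf{1}[D_T\le\lambda-1]$. Since $X$ is a non-increasing function of $\cc$, both $\e(X)$ and $\p(X=0)$ are monotone in $p$; as in Theorems~\ref{Tballs} and \ref{PDesign} this reduces matters to the case $|r|=o(\ln\binom{n}{t})$, so that $\mu\to\infty$. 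Then Lemma~\ref{tails} gives $\p(D_T\le\lambda-1)=\binom{M}{\lambda-1}p^{\lambda-1}(1-p)^{M-\lambda+1}(1+o(1))$, and the estimates $\binom{M}{\lambda-1}p^{\lambda-1}\sim\mu^{\lambda-1}/(\lambda-1)!$, $(1-p)^{M}=e^{-\mu}(1+o(1))$ (valid since $\mu^2/M\to0$), and $\mu^{\lambda-1}\sim(\ln\binom{n}{t})^{\lambda-1}$ combine to yield
\[
\e(X)=\binom{n}{t}\,\p(D_T\le\lambda-1)=(1+o(1))\,\frac{e^{-r}}{(\lambda-1)!}.
\]
In particular $r\to+\infty$ forces $\e(X)\to0$, so $\p(X\ge1)\le\e(X)\to0$ by Markov's inequality, which is the first assertion.

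For the second assertion, assume $r\to-\infty$, so (after the above reduction) $\e(X)\to\infty$, and I would run the second-moment method. Write $\e(X^2)=\sum_{T,T'}\p(D_T\le\lambda-1,\,D_{T'}\le\lambda-1)$; the diagonal terms sum to $\e(X)=o(\e(X)^2)$. For $T\ne T'$ with $i:=|T\cap T'|\in\{0,\dots,t-1\}$, decompose $D_T=U+W$ and $D_{T'}=U'+W$, where $W$ counts members of $\cc$ containing $T\cup T'$ and $U,U'$ count members containing exactly one of $T,T'$; these are independent with $W\sim\mathrm{Bin}(M_i,p)$, $M_i:=\binom{n-(2t-i)}{k-(2t-i)}$ (taken to be $0$ if $k<2t-i$), and $U,U'\sim\mathrm{Bin}(M-M_i,p)$. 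Since $\{D_T\le\lambda-1\}\subseteq\{U\le\lambda-1\}$ and likewise for $T'$, independence gives $\p(D_T\le\lambda-1,\,D_{T'}\le\lambda-1)\le\p(U\le\lambda-1)^2$.

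The crux is to show $\p(U\le\lambda-1)=(1+o(1))\,\p(D_T\le\lambda-1)$ uniformly over $i$. Here $\e(U)=\mu(1-M_i/M)\to\infty$, so Lemma~\ref{tails} applies to $U$ as well, and the ratio of the resulting leading terms is $\bigl(\binom{M-M_i}{\lambda-1}/\binom{M}{\lambda-1}\bigr)(1-p)^{-M_i}$; both factors tend to $1$ because $M_i/M=\Theta(n^{i-t})\to0$ and hence $M_ip=\mu\,M_i/M=\Theta(n^{i-t}\ln n)\to0$ for $i\le t-1$. Summing, the $i=0$ pairs contribute $(1+o(1))\binom{n}{t}^2(\e(X)/\binom{n}{t})^2=(1+o(1))\e(X)^2$ while each $i\ge1$ contributes $\Theta(n^{-i})\e(X)^2=o(\e(X)^2)$, so $\e(X^2)\le\e(X)+(1+o(1))\e(X)^2=(1+o(1))\e(X)^2$. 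Thus $\mathrm{Var}(X)=o(\e(X)^2)$, and Chebyshev's inequality gives $\p(X=0)\le\mathrm{Var}(X)/\e(X)^2\to0$, completing the proof. The main obstacle is exactly this uniform control of the two-point correlations across all overlap sizes $i$; the key point is that the independent splitting $D_T=U+W$ reduces the dependence to the single scalar quantity $M_ip$, which is forced to $0$ by the fixed-$k,t$ regime. (One could instead obtain $\p(X=0)\to0$ from a Stein--Chen Poisson approximation, but, unlike the $\lambda=1$ case, Janson's inequality does not apply directly, since $\{D_T\le\lambda-1\}$ is not of the form ``a fixed family of $k$-sets is avoided''.)
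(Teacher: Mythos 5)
Your proof is correct, and it takes a genuinely different route from the paper's. For the second (harder) assertion the paper applies the Stein--Chen method: it builds the deselection coupling of Lemma~\ref{SC}, observes that the indicators are positively related, bounds the total-variation distance to $\mathrm{Poi}(\mu)$ by estimating $\sum_{j\ne1}\e(I_1I_j)$ through an explicit expansion over triples $(s,u,v)$ (covers of the first $t$-set, covers of the second, joint covers), and concludes $\p(X=0)=e^{-\mu}+o(1)$. You instead run the second-moment method, and the work that replaces the paper's $\rho_{r,s,u,v}$ bookkeeping is your clean decomposition $D_T=U+W$, $D_{T'}=U'+W$ with $U,U',W$ independent and $W\sim\mathrm{Bin}(M_i,p)$: the bound $\p(D_T\le\lambda-1,D_{T'}\le\lambda-1)\le\p(U\le\lambda-1)^2$ together with $M_ip\to0$ packages the dependence into one vanishing scalar, and Lemma~\ref{tails} applied to $U$ and to $D_T$ gives the ratio $\to1$. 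That is exactly parallel to the paper's identification of the dominant $(s,u,v)=(\lambda-1,\lambda-1,0)$ term, but reached by an upper bound rather than an exact expansion. What the paper's approach buys is the full Poisson limit (and hence the limiting probability $\exp\{-e^{-A}/(\lambda-1)!\}$ at $r\to A$ finite, not addressed in the stated theorem); what yours buys is elementariness, avoiding both the coupling construction and the Poisson-approximation machinery, and your parenthetical remark that Janson's inequality does not apply directly here (the bad events are not of ``fixed family present'' form) correctly explains why some variance-type argument is needed in this section even though Janson works in Section~3. One small point worth making explicit: the uniformity ``over $i$'' in applying Lemma~\ref{tails} to $U$ is automatic because $i$ ranges over the fixed finite set $\{0,\dots,t-1\}$ with $t$ held constant, so the $(1+o(1))$ factors can be maximized over $i$.
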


We will prove Theorem~\ref{TGDesign} via the Stein-Chen method, using the following result from \cite{bhj}.

\begin{lem}(\cite{bhj})\label{SC}
Let $\{I_i\}_{i=1}^n$ be a collection of indicator random variables. Suppose that for each $j \in [n]$ there exists a sequence of random variables $\{J_{j,i}\}_{i=1}^n$ on the same probability space with:

\[
\cl\lr J_{j,1}, \ldots, J_{j,n}\rr =\cl\lr I_1, \ldots, I_n \vert I_j = 1  \rr,
\]
where $\cl(Z)$ denotes the distribution of $Z$. We call such a collection a {\em coupling}. Let $X = \sum_{i=1}^n I_i$, and let $\mu = \e(X)$.  Then, with $\P(\mu)$ denoting the Poisson distribution with mean $\mu$,

\begin{enumerate}
\item If $J_{j,i} \leq I_i \ \forall\  i \in [n]\setminus\{j\}$ (i.e., the $\{I_i\}_{i=1}^n$ are {\em negatively related}), 

\[
\tv(\cl(X), \P(\mu)) \leq (1 - e^{-\mu})\left(1-\frac{\v(X)}{\mu}\right).
\] 

\item If $J_{j,i} \geq I_i \  \forall \ i \in [n]\setminus\{j\}$ (i.e., the $\{I_i\}_{i=1}^n$ are {\em positively related}), 

\[
\tv(\cl(X), \P(\mu)) \leq \frac{1-e^{-\mu}}{\mu} \left(\v(X) -\mu +2\sum_{i=1}^n \p^2(I_i =1)\right).
\]
\end{enumerate}

\end{lem}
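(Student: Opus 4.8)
The plan is to prove this by the Stein--Chen method exactly as in \cite{bhj}: encode the Poisson approximation error through the Stein equation for $\P(\mu)$, invoke the standard bound on the Stein solution, and then let the coupling $\{J_{j,i}\}$ do the work, with the sign of $J_{j,i}-I_i$ deciding which of the two bounds we land on.

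First I would fix a set $A\subseteq\mathbb{Z}_{\ge0}$ and recall the Poisson Stein equation: there is a bounded $g=g_A$ with $\mu g(k+1)-kg(k)=\mathbf 1_A(k)-\P(\mu)(A)$ for every $k\ge0$, and --- this is the one analytic input, which I would simply quote from \cite{bhj} --- its forward difference satisfies $\|\Delta g\|:=\sup_k|g(k+1)-g(k)|\le(1-e^{-\mu})/\mu$. Plugging in $k=X$ and taking expectations gives $\P(\mu)(A)-\p(X\in A)=\e[\mu g(X+1)-Xg(X)]$. Next I would rewrite the right-hand side with the coupling: writing $p_i=\p(I_i=1)$ so that $\mu=\sum_i p_i$, one has $\e[\mu g(X+1)]=\sum_i p_i\,\e[g(X+1)]$, while the identity $\cl(J_{i,1},\dots,J_{i,n})=\cl(I_1,\dots,I_n\mid I_i=1)$ turns $\e[Xg(X)]=\sum_i\e[I_ig(X)]$ into $\sum_i p_i\,\e[g(U_i)]$, where $U_i:=\sum_k J_{i,k}$. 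Hence the approximation error equals $\sum_i p_i\big(\e[g(U_i)]-\e[g(X+1)]\big)$.

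The last and most delicate step is to estimate each difference $\e[g(X+1)]-\e[g(U_i)]$ by telescoping forward differences of $g$. Setting $W_i=X-I_i$ and $V_i=\sum_{k\ne i}J_{i,k}$, we have $X+1=W_i+I_i+1$ and $U_i=V_i+1$, and from the marginals of the coupling $\e[W_i]=\mu-p_i$ and $\e[V_i]=p_i^{-1}\e[I_iX]-1$. In the negatively related case $V_i\le W_i$ pointwise, so $g(X+1)-g(U_i)$ is a sum of $W_i+I_i-V_i\ge0$ forward differences; bounding each by $\|\Delta g\|$, summing on $i$, and using $\sum_i p_i=\mu$ together with $\sum_i\e[I_iX]=\e[X^2]$ collapses the estimate to $\|\Delta g\|\,(\mu-\v(X))$, which with $\|\Delta g\|\le(1-e^{-\mu})/\mu$ is precisely (1). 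In the positively related case $V_i\ge W_i$, so I would instead split $g(X+1)-g(U_i)$ as $\big(g(W_i+I_i+1)-g(W_i+1)\big)+\big(g(W_i+1)-g(V_i+1)\big)$; the first piece is $I_i\big(g(W_i+2)-g(W_i+1)\big)$ and costs at most $p_i\|\Delta g\|$ in expectation, the second is again a telescoped sum, of $V_i-W_i\ge0$ differences, costing at most $\|\Delta g\|\,\e[V_i-W_i]$, and bookkeeping the same way produces the additional $2\sum_i p_i^2$ and hence (2).

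I expect the main obstacle to be organizational rather than conceptual: one must track the $I_i$-term carefully --- it is harmless in the negatively related case, where it merely adds to an already-nonnegative count of forward differences, but it must be peeled off separately in the positively related case where $V_i$ and $W_i+I_i$ are not comparable --- and one must get the elementary identities $\sum_i p_i=\mu$, $\sum_i\e[I_iX]=\e[X^2]$ and the formula for $\e[V_i]$ exactly right so that the sums telescope down to variances. The genuinely hard estimate, the ``magic factor'' bound $\|\Delta g\|\le(1-e^{-\mu})/\mu$ on the Stein solution, is classical and is quoted wholesale from \cite{bhj}, so it is not something I would reprove here.
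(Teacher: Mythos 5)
Your proof is correct. The paper states this lemma as a quoted result from Barbour, Holst and Janson and gives no proof of its own, and your argument is exactly the standard coupling proof from that source: the Poisson Stein equation, the quoted magic-factor bound $\|\Delta g\|\le(1-e^{-\mu})/\mu$, the identity $\e[Xg(X)]=\sum_i p_i\,\e[g(U_i)]$ coming from the coupling, and the telescoping of forward differences whose count is controlled by the sign of $J_{j,i}-I_i$, with the bookkeeping identities $\sum_i p_i=\mu$, $\e[V_i]=p_i^{-1}\e[I_iX]-1$ and $\sum_i\e[I_iX]=\e[X^2]$ collapsing the sums to $\mu-\v(X)$ in the negatively related case and to $\v(X)-\mu+2\sum_i p_i^2$ in the positively related case.
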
 
\noindent {\it Proof of Theorem 2.4.}
We have $X=\sum_{j=1}^{n\choose t}I_j,$
where $I_j=1$  if the $j$th $t$-set is covered $\lambda-1$ or fewer times ($I_j=0$ otherwise).  
  We next (partially) exhibit the coupling from Lemma 2.5:  If $I_j=1$, i.e., if the $j$th $t$-set is covered by at most $\lambda-1$ $k$-sets, we let $J_{ji}=I_i$ for each $i$.  On the other hand, if the $j$th $t$-set is covered $\lambda$ or more times, we deselect a certain number of $k$-sets (according to the appropriate distribution) so as to achieve a sample outcome corresponding to $I_j=1$.  We then set $J_{ji}=1$ if the $i$th $t$ set is covered $\lambda-1$ or fewer times after this is done.  Since the conditional distribution is attained by a process of deselection, we  must have $J_{ji}\ge I_i$ for each $i\ne j$ (since a set that is covered at most $ \lambda-1$ times cannot be  covered at least $ \lambda$ times after some $k$-sets are deselected), so that the indicators $I$ are positively related, and we get via Lemma~\ref{SC} that
\begin{eqnarray}\tv(\cl(X),\P(\mu))&\le&\frac{1-e^{-\mu}}{\mu}\lr\v(X)-\mu+2\sum\p^2(I_j=1)\rr\nonumber\\
&\le&\p(I_1=1)+\frac{1}{\mu}\lr\sum_{i\ne j}[\e(I_iI_j)-\e(I_i)\e(I_j)]\rr\nonumber\\
&\le&\frac{\sum_{j\ne 1}\e(I_1I_j)}{\p(I_1=1)}-(N-2)\p(I_1=1),\end{eqnarray}
where $N=\nt$.  To calculate $\rho=\sum_{j\ne 1}\e(I_1I_j)$, consider the case that the $1$st and $j$th $t$-sets have an intersection of size $r$.  Letting $\rho_r$ equal $\e(I_1I_j)$ for such sets, so that $\rho=\sum_r{t\choose r}{{n-t}\choose{t-r}}\rho_r$, we have
\begin{equation}
\rho_r
=\sum_{s\le \lambda-1}\sum_{u\le \lambda-1}\sum_{v\le \min{\{s,u\}}}\rho_{r,s,u,v},\end{equation}
where $\rho_{r,s,u,v}$ is the probability that two $t$-sets that overlap in $r$ elements are both covered by $v$ $k$-sets, and individually by a total of $s$ and $u$ sets.  With 
$$M={\nkt}$$ and 
\[R:={{n-2t+r}\choose{k-2t+r}}\le{{n-t-1}\choose{k-t-1}}=:P\le M,\] we see that 

\begin{eqnarray}\rho_{r,s,u,v}&=&{R\choose v}{{M-R}\choose{s-v}}{{M-R}\choose {u-v}}p^{(s-v)+(u-v)+v}(1-p)^{2M-R-(s+u-v)}\nonumber\\
&=&{R\choose v}{{M-R}\choose{s-v}}{{M-R}\choose {u-v}}p^{(s-v)+(u-v)+v}(1-p)^{2M-R}(1+o(1)).\nonumber\\
\end{eqnarray} Now the quantity ${{M-R}\choose{x}}p^x$ is increasing in $x$ since we may assume without loss that $Mp=\omega(1)$, so that the sum in (3) is dominated by the $s=u=\lambda-1$ terms and thus 
\begin{equation}\rho_r=\sum_{v\le \lambda-1}{R\choose v}{{M-R}\choose {\lambda-1-v}}^2p^{2\lambda-2-v}(1-p)^{2M-R}(1+o(1)).\end{equation}  Consider the summand in (5).  We have:
\[{R\choose v}= O\left( n^{(k-2t+r)v}\right);\]
\[{{M-R}\choose{\lambda-v-1}}^2= O\left(n^{2(k-t)(\lambda-1-v)}\right) ;\]and assuming without loss of generality (again, by monotonicity) that 
\[p^{2\lambda-2-v}\le A_{n,k,t}\frac{\ln^{2\lambda-2-v} n} {n^{(k-t)(2\lambda-2-v)}},\]
we have that \[\rho_r=O\lr\ln^{2\lambda-2}n\sum_v n^{v(r-t)}(1-p)^{2M-R}\rr\]
is dominated by its $v=0$ term, as $r < t$.   Returning to (5), we see thus that 
\[\rho_r={{M-R}\choose{\lambda-1}}^2p^{2\lambda-2}(1-p)^{2M-R}(1+o(1)),\] so that
\begin{eqnarray}\rho&=&\sum_r{t\choose r}{{n-t}\choose{t-r}}{{M-R}\choose{\lambda-1}}^2p^{2\lambda-2}(1-p)^{2M-R}(1+o(1))\nonumber\\
&\le&{n\choose t}\max_r{{M-R}\choose{\lambda-1}}^2p^{2\lambda-2}(1-p)^{2M-R},\end{eqnarray}
and hence by (2),
\begin{eqnarray}
\tv(\cl(X),\P(\mu))&\le&\frac{{n\choose t}\max_r{{M-R}\choose{\lambda-1}}^2p^{2\lambda-2}(1-p)^{2M-R}}{{{M}\choose{\lambda-1}}p^{\lambda-1}(1-p)^{M-\lambda+1}}(1+o(1))\nonumber
\\&&\qquad -{n\choose t}{{{M}\choose{\lambda-1}}p^{\lambda-1}(1-p)^{M-\lambda+1}}(1+o(1))\nonumber\\
&=&\mu\lr \max_r \frac{{{M-R}\choose{\lambda-1}}^2}{{{M}\choose{\lambda-1}}^2}(1-p)^{2\lambda-2-R}-1\rr\nonumber\\
&=&\max_r \mu pR(1+o(1))\nonumber\\
&\le&\frac{B_{n,k,t}\mu\ln n}{n},
\end{eqnarray}
assuming that $p=O(\ln n/n^{k-t})$.  We thus have that the total variation distance tends to $0$ as long as $\mu$ is not too large.  In particular, there exists $\epsilon_n = o(1)$ so that 
\[e^{-\mu}-\epsilon_n\le \p(X=0)\le e^{-\mu}+\epsilon_n,\]  holds, and (by monotonicity) $\p(X=0)$ tends to $0$ or $1$ whenever $\e(X)$ tends to $\infty$ or 0 respectively.  All that remains is to figure out when this occurs.  Since $p \binom{n-t}{k-t} \rightarrow \infty$, we can apply Lemma~\ref{tails} to our computation of $\e(X)$ to see:

\begin{eqnarray*}
\e(X)&=&{n\choose t}\sum_{j=0}^{\lambda-1}{{\nkt}\choose{j}}p^j(1-p)^{{\nkt}-j}\\
&=&{n\choose t}{{\nkt}\choose{\lambda-1}}p^{\lambda-1}(1-p)^{{\nkt}-\lambda+1}(1+o(1))\\
&=&{n\choose t}\frac{{\nkt}^{\lambda-1}}{(\lambda-1)!}p^{\lambda-1}e^{-p{\nkt}}(1+o(1)).
\end{eqnarray*}

Plugging in $p$ as in the statement of the Theorem yields the desired results.  Specifically, we see that for any constant $K$, $$p:=\frac{1}{\binom{n-t}{k-t}} \left (\ln \binom{n}{t} +(\lambda -1) \ln \ln \binom{n}{t} +K\right)$$ gives that $\e(X)=\Theta(1)$.
\hfill\qed

\section {Sidon Sets and Additive Bases} 

A set $\ca\subseteq[n]$ is said to be a $B_h$ set (the totality of these for all $h\ge 2$ are known as Sidon sets) if each of the ${{\vert\ca\vert+h-1}\choose {h}}$ sums of elements drawn with replacement from $\ca$ are distinct.  A set $\ca\subseteq[n]\cup\{0\}$ is said to be an $h$-additive basis if each $j\in [n]$ can be written as the sum of $h$ elements in $\ca$.  Thus, a set is $h$-Sidon or an $h$-additive basis if each element in the potential sumset can be obtained in at most one or at least one way using elements of $\ca$.  We clearly thus have a packing/covering analogy, but as in previous sections, we will not use the word ``packing".  It is known that maximal Sidon sets and minimal additive bases are both of order $n^{1/h}$; for example minimal 2-additive bases have size $1.463\sqrt{n}\le\vert\ca\vert\le1.871\sqrt{n}$. See \cite{gjlr} and \cite{gllt} for details.  

We are interested, however, in random versions of these results, and three basic facts along these lines are as follows:

\begin{thm} (\cite{gjlr}) Consider a subset $\ca=\ca_n$ of size $k_n$ chosen
at random from the ${{n}\choose{k_n}}$ such subsets of $[n]$.
Then for any $h\ge2$,
$$k_n=o(n^{1/2h})\Rightarrow\p(\ca_n\ {\rm is\ }
B_h)\to1\quad(n\to\infty)$$
and
$$n^{1/2h}=o(k_n)\Rightarrow\p(\ca_n\ {\rm is\ }
B_h)\to0\quad(n\to\infty).$$
\end{thm}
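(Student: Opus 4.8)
The plan is to prove the two implications separately: the first by a first–moment (union bound) estimate and the second by the second–moment method. The common observation is that $\ca$ fails to be $B_h$ precisely when $\ca$ contains a \emph{bad configuration}, i.e.\ a set $S\subseteq[n]$ carrying a nontrivial additive relation
\[
a_1+\cdots+a_h=b_1+\cdots+b_h,\qquad a_i,b_i\in S,\ \{a_i\}\neq\{b_i\}\ \text{as multisets}.
\]
After cancelling common terms we may assume the two multisets are disjoint, and a short algebraic check (the case $|S|\le2$ forces the two multisets to coincide) then shows $3\le |S|\le 2h$ and that $S$ satisfies a linear equation $\sum_{v\in S}c_v v=0$ with integer coefficients $|c_v|\le h$, $\sum_v c_v=0$, not all $c_v$ zero.

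For the first implication, fix $j$ with $3\le j\le 2h$. The number of admissible coefficient vectors $(c_v)_{v\in S}$ is bounded by a constant depending only on $h$, and each nontrivial integer equation in $j$ unknowns from $[n]$ has at most $n^{j-1}$ solutions, so at most $O(n^{j-1})$ $j$-subsets of $[n]$ support a bad relation. Since $\p(S\subseteq\ca)=\binom{n-j}{k-j}/\binom{n}{k}\le (k/n)^{j}$ for $|S|=j$, a union bound gives
\[
\p(\ca\text{ is not }B_h)\le\sum_{j=3}^{2h}O\!\left(n^{j-1}\right)\left(\frac kn\right)^{j}=\sum_{j=3}^{2h}O\!\left(\frac{k^{j}}{n}\right).
\]
If $k=o(n^{1/2h})$ then $k^{j}=o(n^{j/2h})=o(n)$ for every $j\le 2h$, so the right side is $o(1)$, proving $\p(\ca\text{ is }B_h)\to 1$.

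For the second implication we may, by monotonicity of the decreasing property ``is $B_h$'', assume $k=k_n$ is only slightly larger than $n^{1/2h}$, in particular $k=o(n)$. Let $Y$ be the number of ordered $2h$-tuples $(a_1,\dots,a_h,b_1,\dots,b_h)$ of \emph{distinct} elements of $\ca$ with $\sum a_i=\sum b_i$; then $Y>0$ forces $\ca$ not $B_h$. A routine count shows that $[n]$ contains $\Theta(n^{2h-1})$ such tuples for every $h\ge2$ (for instance, choose $a_1,\dots,a_{h-1},b_1,\dots,b_{h-1}$ and $b_h$ freely in suitable subintervals and solve for $a_h$), so
\[
\e(Y)=\Theta\!\left(n^{2h-1}\right)\left(\frac kn\right)^{2h}(1+o(1))=\Theta\!\left(\frac{k^{2h}}{n}\right)\longrightarrow\infty .
\]
For the variance I would write $\e(Y^2)=\sum_{\ell=0}^{2h}S_\ell$, where $S_\ell$ collects the ordered pairs of tuples sharing exactly $\ell$ values. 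The term $S_0$ equals $(1+o(1))\e(Y)^2$. For $1\le\ell<2h$ the second tuple contains a value not present in the first, so its defining linear form is independent of that of the first; hence there are $O(n^{4h-\ell-2})$ such pairs and $S_\ell=O(n^{4h-\ell-2})(k/n)^{4h-\ell}=O(k^{4h-\ell}/n^{2})=O(k^{-\ell}\,\e(Y)^2)=o(\e(Y)^2)$. For $\ell=2h$ the second tuple is one of at most $(2h)!$ rearrangements of the first, giving $S_{2h}=O(n^{2h-1})(k/n)^{2h}=O(\e(Y))=o(\e(Y)^2)$. Thus $\mathrm{Var}(Y)=o(\e(Y)^2)$, and Chebyshev's inequality yields $\p(Y=0)\to 0$, so $\p(\ca\text{ is }B_h)\to 0$.

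The step I expect to be the main obstacle is the second–moment bookkeeping: organizing the pairs of $2h$-configurations by their overlap and, crucially, exploiting the fact that two such configurations sharing fewer than $2h$ values have linearly independent defining relations — this is what supplies the extra factor $n^{-1}$ (indeed $n^{-2}$) that beats $\e(Y)^2$. A secondary technical point is confirming that the number of nontrivial $2h$-term relations inside $[n]$ is of order exactly $n^{2h-1}$ with a positive leading constant, which is what makes $\e(Y)\to\infty$ hold throughout the stated range.
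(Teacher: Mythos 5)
The paper states this theorem as a citation to \cite{gjlr} and does not reproduce a proof, so there is no in-paper argument for this exact statement to match against line by line. The closest the paper comes is Theorem~3.4, which generalizes the result to the $B_h[g]$ threshold $n^{g/(h(g+1))}$ (recovering $n^{1/2h}$ at $g=1$, though in the Bernoulli model $p=k/n$ rather than the uniform $k$-subset model) and is proved via a first-moment bound for the $\to 1$ direction and Janson's inequality, with the overlap estimate of Lemma~3.6, for the $\to 0$ direction. Your proposal is correct and takes the more elementary second-moment route: the first-moment half is essentially the same as what the paper does for Theorem~3.4, and for the hard half you replace Janson with Chebyshev. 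Your decomposition of $\e(Y^2)$ by overlap size $\ell=|T\cap T'|$, together with the key observation that for $1\le\ell<2h$ the second tuple's defining relation involves a variable absent from the first and is therefore linearly independent (giving the extra $n^{-1}$), is precisely the combinatorial content of the paper's bound on $\Delta$ via $\mathcal D_{h,g}(\ell)$; you simply package it as $\mathrm{Var}(Y)=o(\e(Y)^2)$ instead of $\Delta=o(\e(Y))$. Two small points you glide past but that hold up: in the uniform $k$-subset model the indicator events are negatively associated, so $S_0\le\e(Y)^2$ holds outright, and $\binom{n-j}{k-j}/\binom{n}{k}=(1+o(1))(k/n)^j$ requires $k\to\infty$ and $k=o(n)$, which your monotonicity reduction (via the standard coupling realizing a uniform random $k$-set inside a uniform random $k'$-set for $k\le k'$) provides. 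Janson would yield the stronger exponential bound $\p(Y=0)\le e^{-\Omega(\e(Y))}$, but for a pure threshold statement Chebyshev is enough, so your argument is a legitimate and self-contained alternative.
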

We say that $\ca$ is an $\alpha$-truncated $h$-basis, if each element of $[\alpha n,(h-\alpha)n]$ can be expressed as an $h$-sum of elements in $\ca$.  
\begin{thm} (\cite{gllt})  
For $h\ge 2$, if we choose elements of $\{0\}\cup[n]$ to be in $\ca$ with probability  
$$p=\sqrt[h]{\frac{K\log n-K \log{\log{n}}+A_n}{n^{h-1}}},$$ where $K=K_{\a,h}=\frac{h!(h-1)!}{\a^{h-1}}$, 
then
$$\p(\ca\ {\it is\ an}\ \alpha-{\it truncated}\ h-{\it basis})\rightarrow\begin{cases} 0 & \mbox{if}\ A_n\rightarrow-\infty\\  1 & \mbox{if}\ A_n\rightarrow \infty \\ \exp\{- \frac{2\alpha}{h-1}e^{-A/K}\} & \mbox{if}\ A_n\rightarrow A\in{\mathbb R}\end{cases}.$$
\end{thm}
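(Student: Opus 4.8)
\emph{Proof proposal.} The idea is to treat this as a coverage problem and run a Poisson/Stein-Chen analysis in the spirit of Corollary~\ref{thresh:Holst} and the proof of Theorem~\ref{TGDesign}. Let $I$ be the set of integers in $[\alpha n,(h-\alpha)n]$, and for $m\in I$ let $I_m$ be the indicator that $m$ is \emph{not} a sum of $h$ elements of $\ca$, so that $\ca$ is an $\alpha$-truncated $h$-basis exactly when $X:=\sum_{m\in I}I_m=0$. The plan is: (i) pin down $\e(X)$ up to a factor $1+o(1)$; (ii) show $\sum_{m\ne m'}\mathrm{Cov}(I_m,I_{m'})$ is negligibly small; and then conclude by Markov's inequality when $A_n\to+\infty$, by Chebyshev's inequality when $A_n\to-\infty$, and by Lemma~\ref{SC} when $A_n\to A$.

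For (i), fix $m$, let $R(m)$ be the number of $h$-element multisets of $\{0\}\cup[n]$ with pairwise distinct entries summing to $m$, and let $Y_m$ count those representations that lie entirely inside $\ca$. Since $\alpha<1$, the number of ordered solutions of $x_1+\cdots+x_h=m$ in $\{0,\dots,n\}$ needs no truncation correction for $m<n$, so near the left endpoint $R(m)=\tfrac{m^{h-1}}{h!(h-1)!}(1+O(1/n))$ (and symmetrically near $(h-\alpha)n$), while unimodality of this count forces $R(m)\ge R(\alpha n)$ for every $m\in I$; moreover the $O(n^{h-2})$ multisets with a repeated entry add only $o(1)$ to $\e(Y_m)$, since $n^{h-2}p^{h-1}=n^{-1/h+o(1)}$. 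A Janson upper bound together with the FKG (Harris) lower bound --- each event $\{S\not\subseteq\ca\}$ being decreasing in $\ca$ --- then gives $\p(I_m=1)=\exp(-R(m)p^h)(1+o(1))$, the Janson overlap term being $\Delta_m=\sum_{S\ne S',\,S\cap S'\ne\emptyset}\p(S\cup S'\subseteq\ca)=\sum_{i=1}^{h-2}O\!\lr n^{2h-i-2}p^{2h-i}\rr=n^{-1/h+o(1)}$, where one uses that two distinct $h$-representations of $m$ share at most $h-2$ entries. Substituting $p^h=\lr K\log n-K\log\log n+A_n\rr\!/n^{h-1}$ with $K=h!(h-1)!/\alpha^{h-1}$, the $\log n$ and $\log\log n$ terms cancel against $R(\alpha n)=\tfrac{(\alpha n)^{h-1}}{h!(h-1)!}(1+o(1))$, leaving $R(\alpha n)p^h=\log(n/\log n)+A_n/K+o(1)$, so $\p(I_{\alpha n}=1)=\tfrac{\log n}{n}\,e^{-A_n/K}(1+o(1))$. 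Summing, $\e(X)=\sum_m\p(I_m=1)$ is dominated by the two endpoint windows: on $[(\alpha+\epsilon)n,(h-\alpha-\epsilon)n]$ one has $R(m)p^h\ge(1+\delta)\log n$, contributing $o(1)$; and near the left endpoint $R(\alpha n+j)p^h=R(\alpha n)p^h+\tfrac{(h-1)j\log n}{\alpha n}(1+o(1))$, so the geometric series gives $\sum_{j\ge0}e^{-R(\alpha n+j)p^h}=e^{-R(\alpha n)p^h}\cdot\tfrac{\alpha n}{(h-1)\log n}(1+o(1))=\tfrac{\alpha}{h-1}e^{-A_n/K}(1+o(1))$, with the right endpoint window contributing the same by symmetry. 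Hence $\e(X)=\tfrac{2\alpha}{h-1}e^{-A_n/K}(1+o(1))+o(1)$.

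For (ii), the same two ingredients applied to a pair $m\ne m'$ give $\e(I_mI_{m'})\le\exp\lr-\e(Y_m)-\e(Y_{m'})+2\Delta_{m,m'}\rr$, where $\Delta_{m,m'}$ adds to $\Delta_m+\Delta_{m'}$ the cross term $\sum\p(S\cup S'\subseteq\ca)$ over representations $S$ of $m$ and $S'$ of $m'$ sharing an entry, and the same counting yields $\Delta_{m,m'}=n^{-1/h+o(1)}$ uniformly in $m,m'$; combined with $\e(I_m)\e(I_{m'})=e^{-\e(Y_m)-\e(Y_{m'})}(1+o(1))$ this gives $\mathrm{Cov}(I_m,I_{m'})\le\e(I_m)\e(I_{m'})\cdot n^{-1/h+o(1)}$ uniformly, and hence $\sum_{m\ne m'}\mathrm{Cov}(I_m,I_{m'})\le n^{-1/h+o(1)}(\e X)^2$. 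Now: if $A_n\to+\infty$ then $\e(X)\to0$ and $\p(X=0)\ge1-\e(X)\to1$; if $A_n\to-\infty$ then $\e(X)\to\infty$ while $\v(X)=\e(X)+\sum_{m\ne m'}\mathrm{Cov}(I_m,I_{m'})-\sum_m\p^2(I_m=1)\le\e(X)+n^{-1/h+o(1)}(\e X)^2$, so $\p(X=0)\le\v(X)/(\e X)^2\to0$; and if $A_n\to A$ then $\mu:=\e(X)\to\tfrac{2\alpha}{h-1}e^{-A/K}$ is bounded, the $I_m$ are positively related (conditioning on the decreasing event $\{I_m=1\}$ makes $\ca$ stochastically smaller, yielding a monotone coupling $\ca^{(m)}\subseteq\ca$ with $\ca^{(m)}$ distributed as $\ca$ given $I_m=1$, so $J_{m,m'}:=I_{m'}(\ca^{(m)})\ge I_{m'}$), and Lemma~\ref{SC}(2) bounds $\tv(\cl(X),\P(\mu))\le\tfrac{1-e^{-\mu}}{\mu}\lr\sum_{m\ne m'}\mathrm{Cov}(I_m,I_{m'})+\sum_m\p^2(I_m=1)\rr$; here $\sum_m\p^2(I_m=1)\le(\max_m\p(I_m=1))\,\e(X)=O(\log n/n)$ and $\sum_{m\ne m'}\mathrm{Cov}(I_m,I_{m'})=o(1)$, so $\tv(\cl(X),\P(\mu))\to0$ and $\p(X=0)=e^{-\mu}+o(1)\to\exp\{-\tfrac{2\alpha}{h-1}e^{-A/K}\}$.

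The main obstacle I anticipate is keeping the error terms small enough. In contrast with the proof of Theorem~\ref{TGDesign}, where a relative error $1+o(1)$ in $\e(X)$ did no harm, here $\e(Y_m)$ is of order $\log n$ and sits in an exponent, so one must verify that the Janson overlap terms $\Delta_m,\Delta_{m,m'}$ and the repeated-summand corrections are $o(1)$ in \emph{absolute} terms --- which is exactly what the identity $n^{2h-i-2}p^{2h-i}=n^{-i/h+o(1)}$ provides for $i\ge1$. The accompanying bookkeeping in step (i) --- tracking the cancellation of $\log n$ and $\log\log n$ to additive precision $o(1)$ before the endpoint geometric series is summed, and checking that the two endpoint windows and the bulk partition $I$ with no overlap (using $\alpha<h/2$) --- is routine but must be carried out with care.
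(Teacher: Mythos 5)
The paper does not prove Theorem~3.2; it is quoted as a cited result from \cite{gllt}, so there is no in-text argument for me to compare your proposal against. Judged on its own terms, your sketch hews closely to the toolkit the paper uses for its new results: Janson's inequality for an upper bound and FKG/Harris for a matching lower bound pin $\p(I_m=1)$ to $e^{-R(m)p^h}(1+o(1))$; an endpoint geometric series yields $\e X$; Chebyshev handles $A_n\to-\infty$; and Stein--Chen with a positively related (monotone) coupling gives the Poisson limit, exactly in parallel with the proofs of Theorems~\ref{TGDesign} and~\ref{TLCPerm}. The exponent bookkeeping --- $n^{2h-i-2}p^{2h-i}=n^{-i/h+o(1)}$ for $i\ge 1$, $n^{h-2}p^{h-1}=n^{-1/h+o(1)}$ --- is correct, and the cancellation $R(\alpha n)p^h=\log n-\log\log n+A_n/K$ against $K=h!(h-1)!/\alpha^{h-1}$ reproduces the claimed constant $\tfrac{2\alpha}{h-1}$.

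Two points would need to be made explicit in a full write-up. First, your step $R(\alpha n+j)p^h=R(\alpha n)p^h+\tfrac{(h-1)j\log n}{\alpha n}(1+o(1))$ tacitly replaces $R(\alpha n)p^h$ by $\log n$, which holds only when $A_n=o(\log n)$; so the exact asymptotic $\e X=\tfrac{2\alpha}{h-1}e^{-A_n/K}(1+o(1))$ should be stated for that regime, with the boundary cases $A_n\to\pm\infty$ handled by the crude fact that $\e X\to 0$ or $\infty$ there (or by monotonicity in $p$, the device the paper invokes repeatedly). Second, the uniform covariance estimate is really the chain $\mathrm{Cov}(I_m,I_{m'})\le\bigl(\prod_S\p(S\not\subseteq\ca)\bigr)\lr e^{\Delta_{m,m'}}-1\rr\le\e(I_m)\e(I_{m'})\lr e^{\Delta_{m,m'}}-1\rr$, where the second inequality uses FKG again to compare the Janson product with $\e(I_m)\e(I_{m'})$; you assert the conclusion but should display this intermediate step. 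Neither of these is a structural gap --- the approach is correct and is the natural one given the rest of the paper.
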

The case $h=2$ is studied in greater detail in the next result, which addresses coverage of each sum $g$ times.  (For historical reasons, we use $g$ in the place of $\lambda$ when studying Sidon sets.)
\begin{thm} (\cite{ghk}) If we choose elements of $\{0\}\cup[n]$ to be in $\ca$ with probability  
$$p=\sqrt{\frac{\frac{2}{\a}\log n+(g-2)\frac{2}{\a} \log{\log{n}}+A_n}{n}},$$ then
$$\p(\ca\ {\it is\ an}\ \alpha-{\it truncated}\ (2-g)-{\it basis})\rightarrow\begin{cases} 0 & \mbox{if}\ A_n\rightarrow-\infty\\  1 & \mbox{if}\ A_n\rightarrow \infty \\ \exp\{- {2\alpha}e^{-A\a/2}\} & \mbox{if}\ A_n\rightarrow A\in{\mathbb R}\end{cases},$$ where an $\alpha$-truncated $2$-$g$ basis is one for which each integer in the target set $[\alpha n,(2-\alpha)n]$ can be written as a 2-sum in at least $g$ ways.
\end{thm}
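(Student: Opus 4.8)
The plan is to let $X=X_g$ denote the number of integers $m\in[\alpha n,(2-\alpha)n]$ that can be written as a $2$-sum of elements of $\ca$ (an unordered pair $a\le b$ with $a,b\in\ca$ and $a+b=m$) in at most $g-1$ ways, so that $\{X=0\}$ is exactly the event that $\ca$ is an $\alpha$-truncated $2$-$g$ basis. Writing $X=\sum_m I_m$ with $I_m$ the corresponding indicator, the goal is to show $X$ is asymptotically Poisson with mean $\mu=\e(X)$. Since the property ``$\ca$ is an $\alpha$-truncated $2$-$g$ basis'' is monotone increasing in $\ca$, and $\ca$ is stochastically increasing in $p$ and hence in $A_n$, it suffices---exactly as in the proof of Theorem~\ref{TGDesign}---to treat the case of constant $A_n\equiv A$ and establish $\p(X=0)\to e^{-\mu}$ with $\mu\to 2\alpha e^{-A\alpha/2}$; the cases $A_n\to\pm\infty$ then follow by sandwiching between constant values of $A$ and letting that constant tend to $\pm\infty$ (for $A_n\to+\infty$ one may alternatively note $\e(X)\to 0$ and apply Markov's inequality).

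\emph{Computing $\mu$.} Write $\xi_i=\mathbf 1\{i\in\ca\}$ and, for fixed $m$, set $R_m=\sum_{a+b=m,\,a\le b}\xi_a\xi_b$. The key elementary observation is that the distinct unordered pairs $\{a,m-a\}$ summing to $m$ are pairwise disjoint as subsets of $\{0\}\cup[n]$, so $R_m$ is an honest sum of independent indicators---each present with probability $p^2$, aside from the one possible diagonal term $\{m/2,m/2\}$ of probability $p$, which is negligible---with mean $\mu_m=r(m)p^2(1+o(1))$, where $r(m)=\tfrac12\min(m,2n-m)+O(1)$. Because $\mu_m\to\infty$ uniformly on the target interval, Lemma~\ref{tails} gives
\[
\p(I_m=1)=\p(R_m\le g-1)=\frac{\mu_m^{g-1}}{(g-1)!}\,e^{-\mu_m}(1+o(1)).
\]
Substituting the stated $p$ and writing $m=\gamma n$ yields $\mu_m=\tfrac{\gamma}{\alpha}\bigl(\log n+(g-2)\log\log n\bigr)+\tfrac{\gamma}{2}A+o(1)$, so each summand decays like $n^{-\gamma/\alpha}$ up to polylogarithmic factors. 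Hence $\mu=\sum_m\p(I_m=1)$ is, to within $o(1)$, controlled by the two windows of width $\Theta(n/\log n)$ around the endpoints $m=\alpha n$ and $m=(2-\alpha)n$, where $\gamma\approx\alpha$; writing $m=\alpha n+j$ turns each window into a geometric-type sum $\sum_j e^{-j\log n/(\alpha n)}\sim\alpha n/\log n$, and adding the two contributions gives $\mu\to 2\alpha e^{-A\alpha/2}$ (and $\mu\to 0$, resp.\ $\infty$, when $A_n\to+\infty$, resp.\ $-\infty$). The linear growth of $r(m)$ near the endpoints is precisely what forces the coefficient $g-2$, rather than $g-1$, on $\log\log n$ in the choice of $p$.

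\emph{Poisson approximation.} Here I would apply Lemma~\ref{SC}. Conditioning on the decreasing event $\{R_m\le g-1\}$ can be realized by a coupling that only \emph{removes} elements of $\ca$ (deselecting a suitable number according to the appropriate distribution when $R_m\ge g$); since removal only decreases every $R_{m'}$, we obtain $J_{m,m'}\ge I_{m'}$ for all $m'\ne m$, so the $I_m$ are positively related and Lemma~\ref{SC}(2) yields
\[
\tv(\cl(X),\P(\mu))\le\frac{1-e^{-\mu}}{\mu}\lr\v(X)-\mu+2\sum_m\p^2(I_m=1)\rr.
\]
Squaring the endpoint estimate $\p(I_m=1)=\Theta(\log n/n)$ and summing over the $\Theta(n/\log n)$ relevant $m$ gives $\sum_m\p^2(I_m=1)=O(\log n/n)=o(1)$, so the task reduces to proving $\v(X)-\mu=\sum_{m\ne m'}\mathrm{Cov}(I_m,I_{m'})-\sum_m\p^2(I_m=1)=o(1)$, i.e.\ to controlling the pairwise covariances---\emph{this is the step I expect to be the main obstacle}. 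Since $\{R_m\le g-1\}$ is decreasing, Harris's inequality gives $\mathrm{Cov}(I_m,I_{m'})\ge 0$; for the matching upper bound one must quantify how the (large-deviation) scarcity of representations of $m$ is inflated by that of $m'$ through the summands they share---a representation pair of $m$ and one of $m'$ meet in at most one element, so the dependence runs through $\Theta(n)$ one-point threads. I would estimate $\p(R_m\le g-1,\ R_{m'}\le g-1)$ by expanding over the $O(1)$ ways in which each count can fall below $g$ and isolating the contribution of representation pairs with a common summand; since each such shared summand costs a factor $p$ and $np^3\to 0$, one should obtain $\p(R_m\le g-1,\ R_{m'}\le g-1)=\p(R_m\le g-1)\,\p(R_{m'}\le g-1)(1+o(1))$ uniformly, and summing the $o(1)$ corrections over all pairs $m,m'$ (the dominant ones lying in a common endpoint window) gives the required $o(1)$. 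Then $\tv(\cl(X),\P(\mu))\to 0$, so $\p(X=0)=e^{-\mu}+o(1)$, which together with the first paragraph completes the proof.
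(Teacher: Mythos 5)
This theorem is not proved in the paper at all: it is quoted from \cite{ghk}, a submitted manuscript by Godbole, Higgins, and Koch, so there is no in-paper proof to compare your attempt against. What the paper does supply are two structural analogues---Theorem~\ref{TGDesign} (covering designs) and Theorem~\ref{TLCPerm} (permutations)---both proved via the Stein--Chen positive-relation machinery (Lemma~\ref{SC}) followed by a variance/covariance bound. Your proposal follows exactly that template, and it is a sensible one: the decomposition $X=\sum_m I_m$ with $I_m=\mathbf 1\{R_m\le g-1\}$ is the right object, the observation that the unordered pairs $\{a,m-a\}$ are pairwise disjoint (so $R_m$ is a sum of independent indicators and Lemma~\ref{tails} applies cleanly) is the key structural fact, and the deselection coupling establishing positive relation is the same device the paper uses in Theorems~\ref{TGDesign} and~\ref{TLCPerm}, correctly adapted to the fact that the events $\{R_m\le g-1\}$ and $\{R_{m'}\le g-1\}$ are all decreasing in $\ca$.

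Two caveats. First, the hard step---showing $\sum_{m\ne m'}\mathrm{Cov}(I_m,I_{m'})=o(1)$---you flag but do not actually execute; the heuristic ``each shared summand costs a factor $p$ and $np^3\to0$'' is the right instinct, but this is precisely where the analogy with Section~2 breaks, since deselecting a single integer $a$ perturbs $R_{m'}$ for $\Theta(n)$ values of $m'$ at once (rather than $O(\binom{k}{t})$ fixed objects), and a careful conditioning or factorial-moment argument is needed to get the uniform $(1+o(1))$. Second, there is a small inconsistency in your computation of $\mu$: you correctly write $\p(I_m=1)=\frac{\mu_m^{g-1}}{(g-1)!}e^{-\mu_m}(1+o(1))$ with the $(g-1)!$, but then report $\mu\to 2\alpha e^{-A\alpha/2}$; carrying the geometric sum through gives $\mu\to \frac{2\alpha}{(g-1)!}e^{-A\alpha/2}$. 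This matches the stated theorem only for $g\le 2$; for $g\ge 3$ a $-\frac{2}{\alpha}\ln(g-1)!$ term belongs in $p$ (compare the $-\ln(\lambda-1)!$ term in Theorem~\ref{Holst} and the constant $K$ absorbed at the end of the proof of Theorem~\ref{TGDesign}), so either the paper's transcription of \cite{ghk} omits that term or $A_n$ is implicitly shifted---but your sketch should make the $(g-1)!$ factor explicit rather than quietly dropping it.
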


Theorems 3.2 and 3.3 are finite representability versions of the key result in \cite{et}, where a variable input probability was used and the focus was on representing each integer as a sum in {\it logarithmically many} ways; see also \cite{gglz}.

Much of our canonical format for covering threshold progressions can already be seen to be valid; in particular for $h=2$ and $\a=1/2$, Theorem 3.3 reveals that an extra input component of $4\ln\ln n$ yields an extra representation as a sum for each element in $[n/2, 3n/2]$.  Other than improving Theorem 3.3 so as to be valid for all $h$ (which we do not attempt here), all that remains is to address the question of when, wlp/whp, we have the generalized Sidon property of each element in a sumset being represented at most $g\ge 2$ times. 
For $h\ge2; g\ge 1$, we say that $\ca\subseteq[n]$ satisfies the $B_h[g]$ property if for all integers $k\in[h,nh]$, the equation

$$a_1+a_2+\ldots+a_h=k; a_1\le a_2\ldots\le a_h; a_i\in\ca, i=1,\ldots,n$$ has at most $g$ solutions.

\begin{thm}\label{Sidon}
Let $k = k(n)$ be arbitrary. Let $\cc\subseteq [n]$ be a random subset of $[n]$ in which each element of $[n]$ is selected for membership in $\cc$ with probability $p:=\frac{k}{n}$. Then for any $h \geq 2$, $g \geq 1$ we have:

\[
k = o\left ( n^{\frac{g}{h(g+1)}} \right ) \Rightarrow \p(\cc \text{ is } B_h[g]) \to1\quad(n\to\infty),
\]
and
\[ 
n^{\frac{g}{h(g+1)}} = o\left ( k \right ) \Rightarrow \p(\cc \text{ is } B_h[g]) \to0\quad(n\to\infty).\]

\end{thm}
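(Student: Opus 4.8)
The plan is to mirror the structure of Theorem~\ref{PDesign}, treating the $B_h[g]$ property as a $g$-packing problem in which the ``objects to be covered'' are the target integers $k\in[h,nh]$ and the ``covering sets'' are the $h$-subsets (with repetition) of $\cc$ that sum to $k$. For the wlp/upper direction, define $X = X_g$ to be the number of integers $k\in[h,nh]$ that admit at least $g+1$ representations as an ordered-nondecreasing $h$-sum of elements of $\cc$. A representation of $k$ is a multiset $\{a_1\le\cdots\le a_h\}$ with $\sum a_i = k$, and each such representation is ``present'' iff all the (at most $h$) distinct values among the $a_i$ lie in $\cc$, which has probability at most $p^{\,\lceil h/?\rceil}$ — more carefully, if the representation uses $d$ distinct values it is present with probability $p^d$, and since we are after an upper bound on $\e(X)$ we will want to be slightly careful here and split by the number of distinct values. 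First I would bound $\e(X)$ by summing over $k$, over all $(g+1)$-tuples of distinct representations of $k$, of the probability that all of them are present simultaneously; the union of the supports of $g+1$ distinct representations of the same $k$ has size at least roughly $\tfrac{h(g+1)}{?}$ — the key combinatorial point is that $g+1$ distinct multisets of size $h$ summing to the same value cannot share too many common elements, so their union of distinct values has size $\gtrsim \tfrac{g+1}{2}\cdot\frac{h}{?}$; I will need the clean statement that the union has at least $h + g$ or so distinct elements, giving $\p(\text{all present})\le p^{\,h+g}$ up to constants, while the number of ways to choose $k$ and the tuple is $O(n^{\,h+g-1})$ (choose the supports freely, $n^{h}$ for the first representation fixing $k$, then $n^{g-1}$ for the rest). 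Then $\e(X) = O(n^{h+g-1}p^{h+g}) = O(n^{h+g-1}(k/n)^{h+g}) = O(k^{h+g}/n)$, and $k = o(n^{g/(h(g+1))})$ forces this to $0$; Markov's inequality finishes this direction.

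For the whp/lower direction I would again invoke Talagrand's inequality exactly as in the proofs of Theorems~\ref{Tballs} and \ref{PDesign}: $X$ is Lipschitz with constant a function of $h$ only (adding or removing one integer from $\cc$ changes each count by a bounded amount, hence changes $X$ by $O(n^{h-1})$ in the worst case — wait, that is not $O(1)$, so one must instead phrase the trials appropriately or rescale; the cleaner route is to note that flipping one coin changes $X$ by at most the number of $k$'s whose representation-count crosses the threshold $g+1$, which is $O(n^{h-1})$, so one applies Talagrand with Lipschitz constant $c = c(n)$ and $f$-certifiability $f(s) = s(g+1)$, tracking the $c$ through the bound). Alternatively, and more in the spirit of the paper, one shows directly via a second-moment / Stein--Chen argument that $\p(X=0)\to 0$ once $\e(X)\to\infty$. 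So the real work is the matching lower bound $\e(X)\to\infty$ when $k\gg n^{g/(h(g+1))}$ (equivalently $k^{h+g}/n\to\infty$), which requires that the dominant contribution to $\e(X)$ genuinely is of order $n^{h+g-1}p^{h+g}$, i.e. that there are $\Omega(n^{h+g-1})$ choices of an integer $k$ together with $g+1$ distinct representations whose distinct-value supports are pairwise as disjoint as possible and of total size exactly $h+g$ (or whatever the true exponent is) — and then a second-moment computation on this restricted family, analogous to the $\rho_{r,s,u,v}$ bookkeeping in the proof of Theorem~\ref{TGDesign}, to show the variance is $o(\e(X)^2)$.

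I expect the main obstacle to be pinning down the exact exponent in the threshold, which in the statement is $\tfrac{g}{h(g+1)}$, i.e. the claim that the critical scale is $p \asymp n^{-(h-1) - 1/(g+1)}\cdot(\log\text{-free})$, equivalently $\e(X)\asymp n^{h+g-1}p^{h+g}$. Getting the lower bound on $\e(X)$ right means exhibiting, for a positive fraction of integers $k$, a family of $g+1$ representations of $k$ using collectively only $h+g$ distinct values of $[n]$ (this is an additive-combinatorics construction: e.g. start from a single representation and repeatedly apply a two-term ``swap'' $a_i + a_j \mapsto (a_i-1)+(a_j+1)$, each swap introducing at most one new value), and checking that the count of such configurations is $\Theta(n^{h+g-1})$ rather than a lower-order term — and symmetrically, for the upper bound, verifying that no family of $g+1$ distinct representations of a common $k$ can have union of distinct values smaller than $h+g$. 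Handling the multiplicity bookkeeping (distinct values versus multiset size, and the truncation of the target interval to $[h,nh]$ which only costs constants) is routine once this combinatorial lemma is in place; the Talagrand Lipschitz-constant subtlety above is also routine but should be stated carefully. The remaining steps — Lemma~\ref{tails} to collapse the binomial tail $\sum_{r\ge g+1}$ to its first term, Markov for the easy direction, and the monotonicity-in-$k$ argument to pass from the critical window to all larger $k$ — are identical to what is already done for Theorems~\ref{Tballs} and \ref{PDesign}.
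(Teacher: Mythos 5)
Your plan has the wrong dominant term in $\e(X)$, and therefore the wrong threshold exponent. You pair the \emph{minimum} possible union size of a $(g+1)$-tuple of representations (around $h+g$) with a count of $O(n^{h+g-1})$ to get $\e(X)=O(n^{h+g-1}p^{h+g})=O(k^{h+g}/n)$. But those two extremes belong to different configurations: a tuple whose union of distinct values has size $\ell$ is present with probability $p^{\ell}$, and there are $O(n^{\ell-g})$ such tuples (this is Proposition~\ref{Bhg}). The expectation is thus a sum $\sum_{\ell}O(n^{\ell-g})p^{\ell}$, whose consecutive-term ratio is $np=k\to\infty$, so the \emph{largest} $\ell$, namely $\ell=h(g+1)$, dominates. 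That term is $n^{h(g+1)-g}p^{h(g+1)}=k^{h(g+1)}/n^{g}$, and setting it to $\Theta(1)$ gives the correct threshold $k\asymp n^{g/(h(g+1))}$; your exponent $1/(h+g)$ coincides with $g/(h(g+1))$ only when $h=g^2$. To see concretely that your bound fails, take $h=2$, $g=1$: you would have $\e(X)\to0$ whenever $k\ll n^{1/3}$, yet the theorem asserts $\p(X=0)\to0$ already for $k\gg n^{1/4}$, so $\e(X)=O(k^{h+g}/n)$ cannot hold. The tuples of $g+1$ pairwise element-disjoint representations, which your count of $O(n^{h+g-1})$ omits, overwhelm the minimal-union tuples.

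For the lower bound, you correctly flag that Talagrand's inequality does not apply off the shelf: flipping one coin can change the representation counts of $\Theta(n^{h-1})$ target sums at once, and near the threshold $\e(X)$ can be as small as any slowly-growing function, far below the $c^{2}=n^{2(h-1)}$ that a rescaled Talagrand bound would demand, so tracking a non-constant Lipschitz constant is a dead end rather than a ``routine subtlety.'' But your suggested replacement (a generic second-moment or Stein--Chen argument on $X$) is left as a gesture. The paper instead restricts to the indicator sum $Y$ over the $\ell=h(g+1)$ configurations only, observes $\p(X=0)\le\p(Y=0)$, and applies Janson's inequality, which is the right tool because each $I_{\bx}$ is the up-set ``a fixed $h(g+1)$-element set lies in $\cc$.'' The content you anticipate as a variance computation then appears as the correlation bound $\Delta=o(n^{h(g+1)-g}p^{h(g+1)})$ (Lemma~\ref{Dhg}), organized by the combined support size and by how many blocks of the second tuple fall inside the support of the first; your proposed ``swap'' construction for exhibiting many configurations is superfluous once Proposition~\ref{Bhg} at $\ell=h(g+1)$ is in place, since the dominant tuples are simply $h(g+1)$ free symbols subject to $g$ independent linear constraints.
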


\begin{proof}

 Define
\[\ca_h=\{{\bf a}=(a_1,\ldots, a_h): 1\le a_1\le a_2\le \dots\le a_h\le n\}.\]
We will write $\bf{a}$ in vector form to ensure we have an ordering on the elements, but will also use standard set operations in the obvious way, i.e. ${\bf{a}}\cup {\bf{a'}}=\{a:a\in {\bf{a}}\text{ or }a\in{\bf{a'}}\}$. Next, define

\begin{align*}\cb_{h,g}=\{({\bf a}_1,\ldots,{\bf a}_{g+1})\in \ca_h^{g+1}: a_{1,1}+\ldots a_{1,h}=\ldots=&a_{g+1,1}+\ldots a_{g+1,h}\ \\&{\rm and}\ {\bf a}_1<\ldots<{\bf a}_{g+1}\},\end{align*}
where $<$ denotes the lexicographic order on $\ca_h$.  Finally, set 
\[\cb_{h,g}(l)=\{({\bf a}_1,\ldots,{\bf a}_{g+1})\in\cb_{h,g}:\vert{\bf a}_1\cup\ldots\cup{\bf a}_{g+1}\vert=l\}.\]
Given $\bx \in \cb_{h,g}(l)$, it is convenient to write $\cup \bx:= \{a : \ba \in \bx \textit { and } a \in \ba\}$. For a given $h$ and $g$, the maximum value of $l$ for which $\cb_{h,g}(l)$ is nonempty is $l=h(g+1)$.  Also, notice that $\cc$ satisfies the $B_h[g]$ property if and only if  
it does not contain $\cup \bx$ for any 
$\bx \in\cb_{h,g}$.  Accordingly, for any ${\bf x}
\in\cb_{h,g}$, set 

$$I_{\bf x}=\begin{cases}1&\mbox{if}\ 
\cup\bx\subseteq \cc\\ 0& \mbox{otherwise,}
\end{cases}$$ 
and let 
\[X=\sum_{{\bf x}\in\cb_{h,g}}I_{\bf x}.\]

An element $\bx=(\ba_1,\dots,\ba_{g+1}) \in \mathcal{B}_{h,g}(l)$ is determined by its $l$ distinct elements and a redundancy pattern determining which elements $a_{ij}$ and $a_{i'j'}$ are equal. The number of such redundancy patterns is a constant depending solely on $h$ and $g$. For example, since each of the $\ba_i$ are listed in non-decreasing order such a redundancy pattern could be realized as $g+1$ non-decreasing strings of length $h$ on $\ell$ symbols. Thus, we focus on the number of ways to select the $l$ distinct elements.
%

\begin{prop}\label{Bhg} For $h\geq 2$, $g\geq 1$, and $g+1\leq l\leq h(g+1)$,  $|\mathcal{B}_{h,g}(l)|= O(n^{l-g})$.
\end{prop}
\begin{proof} We have $g$ nondegenerate linear equations to solve, namely 
\begin{align*}
a_{11}+\dots+a_{1h}&=a_{21}+\dots+a_{2h}\\
&\;\;\vdots\\
a_{11}+\dots+a_{1h}&=a_{(g+1)1}+\dots+a_{(g+1)h}.
\end{align*}  

If $l \leq g$, then there are more equations than symbols and the system of equations is thus determined, establishing Proposition \ref{Bhg} in this case. Assume $g+1 \leq l \leq h(g+1)$. Let $S:= \{{\bf a_i}\}_{i=1}^{g+1}$. To prove Proposition $\ref{Bhg}$, we now provide a partition $\{P_1, P_2, \ldots, P_t\}$ of $S$ such that the equations above are determined by  fixing the symbols in precisely one representative of each $P_i$. Intuitively, the procedure is as follows:

Take an arbitrary member of $S$, say $s^1_1$, and initialize $P_1$ with it. Then, if there are any systems of equations given by sums in  $S$   which are determined when we fix the symbols of $s^1_1$, we move one such system, say $S_2$, to $P_1$. Now, if there are any similar systems of equations in $S$ remaining which are determined when we fix the symbols of $s^1_1$ {\em and} $S_2$, we move one, say $S_3$ to $P_1$. Otherwise, we build $P_2$ in the same fashion until $S$ is empty.

Formally, we use the following procedure:

\begin{itemize}
\item[Step 0:] Initialize $i = 1$, $S^{(1)} := S$, and $P_i = \emptyset$.  
\item[Step 1:] If $S^{(i)} = \emptyset$, stop. Otherwise, choose ${\bf a_j} \in S^{(i)}$ and set $P_i := \{{\bf a_j}\}$, $s^i_1:={\bf a_j},$  and remove ${\bf a_j}$ from $S^{(i)}$. 
\item[Step 2:] We have chosen $P_i = \{s^i_1, \ldots, s^i_k\}$. There are two cases:
\begin{itemize}
\item[Case 1:] There is some $S_j \subseteq  S^{(i)}$  such that $S_j = \{\bf{a_j^1},\bf{a_j^2}, \ldots , \bf{a_j^{t_j}}\} $ and the 
system of equations given by $S_j$  is determined by fixing the symbols in $s^1_1, s^2_1, \ldots, s^i_1$ subject to the equations 
\[\sum s^1_1 = \sum s^i_1 =\sum {\bf a_j^1} = \ldots \sum{\bf a_j^{t_j}}.\] 
In this case, set $s^i_{k+1} := {\bf a^1_j}$, $s^i_{k+2} :={\bf a^2_j}, \ldots, s^i_{k+{t_j}} := {\bf a^{t_j}_j} $    and add these elements  to $P_i$. Remove $S_j$ from $S^{(i)}$. Return to Step 2. 
\item[Case 2:] There is no $S_j$ as in Case 1.  Increment $i$ and initialize $P_i:= \emptyset$. Return to Step 1.
\end{itemize}
\end{itemize}

By construction, if we (sequentially) fix the symbols in $s^1_1$, $s^2_1$, $\ldots$, $s^t_1$, then we determine every member of $S$, and so the partition produced by the above procedure has the property we desire.  Let $l_i$ denote the number of free symbols in $s^i_1$ when the symbols in $s^1_1$, $s^2_1$,  $\ldots$, $s^{(i-1)}_1$ have been fixed. Noting that fixing the symbols in $s^1_1$ determines the common sum of each of the $\{\bf a_j\}$, we have 

\[n^{l_1+\sum_{j=2}^r(l_j-1)}=n^{\sum_{j=1}^rl_j-(r-1)}\]
choices for the free symbols. Let $|P_i|= g_i$ and let the number of free symbols  (once the preceeding symbols are fixed) in $s^i_1$ be $l_i$. By construction (critically) the number of sums ($g_j-1$) which can be determined by fixing the symbols in $s^j_1$ is the same as the number of symbols that can be determined by fixing the symbols in $s_1^j$. Thus  
\[\sum_{j=1}^rl_j+(g_j-1)= l ,\] and the number of choices overall is at most
\[n^{l-\sum_{j=1}^r(g_j-1)-(r-1)}=n^{l-(g+1)+r-(r-1)}=n^{l-g},\]
as desired.
\hfill\end{proof}
  Since $\cc$ is $B_h[g]$ if and only if $I_\bx=0$ for all $\bx\in \mathcal{B}_{h,g}$, i.e. if and only if $X=0$, we  apply Markov's inequality to get

$$
\p(\cc\text{ is not }B_h[g])=\p(X\geq 1)\le \e[X].$$ Furthermore for all $\bx\in \mathcal{B}_{h,g}(l)$,
$$
\p(I_\bx=1)= \left(\frac{k}{n}\right)^l,
$$
and thus 
$$
\e[X]= \sum_{l=g+1}^{h(g+1)}|\mathcal{B}_{h,g}(l)|\left(\frac{k}{n}\right)^l\preceq \sum_{l=g+1}^{h(g+1)}n^{l-g}\left(\frac{k}{n}\right)^l\to0
$$
if $k\ll n^{\frac{g}{h(g+1)}},$ proving the first part of Theorem~\ref{Sidon}.  

The proof of the second part begins by setting 
$$
Y=\sum_{\bx\in \mathcal{B}_{h,g}(h(g+1))}I_\bx,
$$
so that
$$
\p(\cc\text{ is }B_{h}[g])=\p(X=0)\leq \p(Y=0).
$$

Define  a relation $\sim$ on $\mathcal{B}_{h,g}(h(g+1))$ as follows: For $\bx
$,  $\by
\in \mathcal{B}_{h,g}(h(g+1))$, we have 
$$
\bx\sim\by\iff \bx\neq\by \text{ and } 
(\cup \bx) \cap (\cup \by )\neq \emptyset.
$$
Applying Janson's Inequality (Theorem~8.1.1 in~\cite{as}) we see:
\beq \label{JISidon}
\p(Y=0)\leq \left(\prod_{\bx\in \mathcal{B}_{h,g}(h(g+1))}\p(I_\bx=0)\right)\exp(\Delta),
\eeq
with
\beq \label{JIDSidon}
\Delta=\sum_{\bx\sim\by}\p(I_\bx I_\by=1).
\eeq

With a view towards bounding $\Delta$, 
for $(g+1)h\leq l\leq 2(g+1)h-1$, define 
$$
\mathcal{D}_{h,g}(l):=\{(\bx,\by)\in \mathcal{B}_{h,g}(h(g+1))\times\mathcal{B}_{h,g}(h(g+1)):\bx\sim\by\text{ and }|\bx\cup\by|=l\},
$$
so that 
\beq \label{JID2Sidon}
\Delta=\sum_{l=h(g+1)}^{2h(g+1)-1}|\mathcal{D}_{h,g}(l)|p^l.
\eeq

\begin{lem}\label{Dhg}
For $h \geq 2$, $g \geq 1$ and $h(g+1) \leq l \leq 2h(g+1) -1$, we have $|\mathcal{D}_{h,g}(l)|p^l = o(n^{h(g+1) -g}p^{h(g+1)})$. 
\end{lem}

\begin{proof}
Our aim is to approximate the number of pairs $(\bx, \by)\in \mathcal{D}_{h,g}(l)$. Given $(\bx, \by) \in \mathcal{D}_{h,g}(l)$ with $\bx = (\ba_1, \ba_2, \ldots, \ba_{g+1})$ and $\by = ({\bf b}_1, {\bf b}_2, \ldots, {\bf b}_{g+1})$ (with each entry indexed in the natural way), let $r = r(\bx, \by)$ denote the number of indices $i$ so that ${\bf b}_i \subseteq \cup \bx$. First, we remark that by Proposition~\ref{Bhg}, we have $O(n^{h(g+1) -g})$ choices for $\bx$, since $\bx$ contains $h(g+1)$ distinct symbols drawn from $[n]$ subject to the same $g$ linear restrictions as before.
%

While $\bx \in B_{h,g}(h(g+1))$ has all distinct symbols (and therefore no redundancy pattern), $(\cup \bx) \cap (\cup \by) \neq \emptyset$ and so we will use a redundancy pattern for $\by$ to tell us which variables it shares with $\bx$. However, there are only constantly many redundancy patterns for $\by$ given the choices for $\bx$. We thus focus on how many choices we have for the $\ell - h(g+1)$ elements of $\cup \by$ disjoint from $\cup \bx$. We split into three cases, when $1 \leq r \leq g$, and  the extreme cases  $r = 0$ and $r = g+1$.    

\vskip .5cm
 \noindent \textit{Case 1: $1 \leq r \leq g$}:

$1 \leq r \leq g$ means that there are  $r$ indices $j$ so that ${\bf b}_j \subseteq \cup \bx$, and that this is not the totality of $[g+1]$. 
Suppose without loss of generality that $1$ is one such index and throw away all $r$ of these indices so that  $\{i_j\}_{j=1}^{g+1-r}$ is the collection of indices for which ${\bf b}_{i_j} \not \subseteq \cup \bx$ for each $j \in [g+1-r]$. Noting that since $\by \in \mathcal{B}_{h,g} (h(g+1))$ means precisely that $\by$ has all distinct symbols, the $g+1-r$ linear equations given by:  

\begin{align*}
b_{11}+\dots+b_{1h}&=b_{{i_1} 1}+\dots+b_{{i_1},h}\\
&\;\;\vdots\\
b_{11}+\dots+b_{1h}&=b_{i_{g+1-r} 1}+\dots+b_{i_{g+1-r}h}
\end{align*}

\noindent each have a variable not contained in any other equation, and are hence non-degenerate. Hence by Proposition \ref{Bhg} there are at most $O(n^{l - h(g+1) -(g+1-r)})$ ways to pick $\by$ in this case, and thus $O(n^{l + r -2g -1})$ pairs $(\bx, \by)$. Thus we have established Lemma~\ref{Dhg}, Case 1 if we show $n^{l +r -2g -1}p^l \ll n^{h(g+1)-g}p^{h(g+1)}$. Rearranging, we need

\begin{equation}\label{npeq}
(np)^{l-h(g+1)} \ll n^{g+1-r}.
 \end{equation}
 We remark that since $np = k$, $r \le g$, and we are assuming $k \gg n^{\frac{g}{h(g+1)}}$,  we can write $np = \phi(n) n^{\frac{g}{h(g+1)}}$ for some $\phi(n) \rightarrow \infty$. We will show that we can produce $\phi^\prime(n) \rightarrow \infty$ so that 

\begin{equation} \label{phiprime}
(\phi^\prime(n) n^{\frac{g}{h(g+1)}})^{l-h(g+1)} \ll n^{g+1-r}
\end{equation}

\noindent holds, and that  Equation~\ref{phiprime} is sufficient to imply Lemma~\ref{Dhg}, Case 1. We can find a $\phi^\prime(n)$ which satisfies Equation~\ref{phiprime} whenever $n^{\frac{g(l-h(g+1))}{h(g+1)}} \ll n^{g+1-r}$. In other words, if 

\[
\frac{g(l-h(g+1))}{h(g+1)} < g+1-r.
\] 

Since $l = |\cup \bx \bigcup \cup \by|$, and $r$ is the number of indices $j$ for which all $h$ of the symbols of ${\bf b}_j$ occur in $\cup \bx$, we get the elementary bound $l \leq 2h(g+1) -rh$. Further, $r \leq g$ and the above inequality follows readily. Hence, we can find $\phi^\prime(n)$ so that Equation~\ref{phiprime} holds. To see that this implies Lemma~\ref{Dhg}, Case~1 note that if $\phi(n) = O(\phi^\prime(n))$ then Equation~\ref{npeq} follows from Equation~\ref{phiprime} by simple substitution. If $\phi(n) \ll \phi^\prime(n)$, then we have  
\[
(np)^{l-h(g+1)}= (\phi(n) n^{\frac{g}{h(g+1)}})^{l-h(g+1)}  \ll (\phi^\prime(n) n^{\frac{g}{h(g+1)}})^{l-h(g+1)} \ll n^{g+1-r}.
\]

\noindent by Equation~\ref{phiprime}, as desired. On the other hand, if $\phi(n) \gg \phi^\prime(n)$, we have:
\[
p = \phi(n) n^{\frac{g}{h(g+1)}-1} \ge \phi^\prime(n) n^{\frac{g}{h(g+1)}-1} = p^\prime
\]

\noindent and the property  ``$\cc$ is $B_h[g]$'' is monotone in $p$. This concludes the proof of Lemma~\ref{Dhg}, Case 1.

\vskip .5cm

\noindent \textit{Case 2: $r = 0$}:

$r = 0$ means that ${\bf b}_i \not \subseteq  \cup \bx$ for all $1 \leq i \leq g+1$. Since $\by \in \mathcal{B}_{h,g} (h(g+1))$,  $\by$ has all distinct symbols and the $g$ linear equations given by:  

\begin{align*}
b_{11}+\dots+b_{1h}&=b_{21}+\dots+b_{2h}\\
&\;\;\vdots\\
b_{11}+\dots+b_{1h}&=b_{(g+1)1}+\dots+b_{(g+1)h}
\end{align*}
each have a variable not contained in any other equation, and are hence non-degenerate. Thus,  there are $O(n^{l -h(g+1) -g})$ ways to pick $\by$ and  $O(n^{l - 2g})$ pairs $(\bx, \by)$ for which $r = 0$.  For $1 \leq r \leq g$, $l +r - 2g - 1 \geq l - 2g$, and so we are done by Case 1.

\vskip .5cm
\noindent \textit{Case 3: $r = g+1$}:

$r=g+1$ means that for each index $i$, ${\bf b}_i \subseteq \cup \bx$, i.e., $\cup \bx = \cup \by$ and $l = h(g+1)$. Notice that, a priori,  choosing $\bx$  arbitrarily in  $n^{h(g+1) - g}$ ways subject to the same linear constraints as in Proposition~\ref{Bhg} completely determines $\by$, and $|D_{h,g}(h(g+1))| p^{h(g+1)} = O(n^{h(g+1) - g}) p^{h(g+1)}$, contrary to the conclusion of Lemma~\ref{Dhg}. However, not every choice of $\bx$ satisfies the linear constraints imposed by $\by$. We want to show that these constraints are non-trivial. More precisely, we wish to show that we can find $i$, $j$ so that the $g+1$  linear equations given by:

\begin{align*}
a_{11}+\dots+a_{1h}&=a_{21}+\dots+a_{2h}\\
&\;\;\vdots\\
a_{11}+\dots+a_{1h}&=a_{(g+1)1}+\dots+a_{(g+1)h}\\
b_{i1} + \ldots + b_{ih} &= b_{j1} + \ldots + b_{jh}
\end{align*}

\noindent are non-degenerate. Then we will have at most $O(n^{h(g+1) - g-1})$ choices for $(\bx, \by)$ and thus
 \[|D_{h,g}(h(g+1))| p^{h(g+1)} = O(n^{h(g+1) - g-1}) p^{h(g+1)} = o(n^{h(g+1) - g}) p^{h(g+1)}\]
\noindent as desired. We now produce such an $i$ and $j$. 

First,  there are at least two indices (say $1$ and $2$) so that neither  ${\bf b}_1$ or ${\bf b}_2$ is any member of $\bx$, for if there is at most one such index the lexicographic ordering on $\bx$ and $\by$ implies that $\bx = \by$. To see that the equations
\begin{align*}
a_{11}+\dots+a_{1h}&=a_{21}+\dots+a_{2h}\\
&\;\;\vdots\\
a_{11}+\dots+a_{1h}&=a_{(g+1)1}+\dots+a_{(g+1)h}\\
b_{11} + \ldots + b_{1h} &= b_{21} + \ldots + b_{2h}
\end{align*}
\noindent 
are non-degenerate, we argue formally as follows:  Suppose we have a linear combination
\[ 
\sum_{i = 1}^g c_i(a_{11}+\dots+a_{1h} - (a_{i1}+\dots+a_{ih})) =   b_{11} + \ldots + b_{1h} - (b_{21} + \ldots + b_{2h}).
\]
For each $i$, $c_i \in \{-1,1,0\}$ as the symbols in $\bx$ are all distinct (and hence show up in at most one equation) while the coefficients on the right are unitary. Moreover, there is exactly one $j$ for which $c_j \neq 0$, as otherwise we have more symbols on the left hand side than the right. Finally, the equation 
\[\pm (a_{11}+\dots+a_{1h} - (a_{j1}+\dots+a_{jh})) = b_{11} + \ldots + b_{1h} - (b_{21} + \ldots + b_{2h})
\]
\noindent forces ${\bf b}_1$, ${\bf b}_2 \in \{\ba_1, \ba_j\}$ by matching coefficients. This contradicts our assertion about ${\bf b}_1$ and  ${\bf b}_2$, and so we have found the indices we desired. Thus, Lemma~\ref{Dhg}, Case 3 is finished.  
\end{proof}

Lemma~\ref{Dhg}  along with Equation~\ref{JID2Sidon}, now yield:

\[
\Delta=\sum_{l=h(g+1)}^{2h(g+1)-1}|\mathcal{D}_{h,g}(l)|p^l = o(n^{h(g+1) - g} p^{h(g+1)}).
\] 
\noindent Hence this estimate for  $\Delta$, together with Proposition~\ref{Bhg} (for $l = h(g+1)$) yields, when substituted into Equation~\ref{JISidon}:
\begin{align*}
\p(Y=0)&\leq \left(\prod_{\bx\in \mathcal{B}_{h,g}(h(g+1))}\p(I_\bx=0)\right)\exp(\Delta)\\
& \leq (1- p^{h(g+1)})^{|\mathcal{B}_{h,g}(h(g+1))|} \exp(\Delta)\\
& \preceq \exp(-n^{h(g+1) - g}p^{h(g+1)})\exp(\Delta)\\
& \rightarrow 0,
\end{align*} 
as desired. 
\end{proof}
\section {Permutations} 
What is the minimum number $C_1(n,n+1)$ of $(n+1)$-permutations needed to cover each $n$-permutation as an embedded order-isomorphic subsequence?  We have from \cite{aghk} that a bound is
$$C_1(n,n+1)\leq\frac{(n+1)!}{n^2}\lr 1+{\log n}\rr(1+o(1)).$$
Also, it was shown in the same paper that
$$C_\lambda(n,n+1)\leq\frac{(n+1)!}{n^2}\lr \frac{\lambda}{(\lambda-1)!}(1+o(1))+{\log n}+(\lambda-1)\log\log n\rr,$$
once again exhibiting the $\log\log$ phenomenon in the context of bounds.
Denote by $S_n$ the collection of permutations on $n$ symbols. The authors of \cite{aghk} provide the threshold  for the property that each $\pi\in S_n$ is covered by at least one permutation in $S_{n+1}$. This statement is made precise in Theorem~\ref{TCPerm}:

\begin{thm}(\cite{aghk})\label{TCPerm}
Let $r = r(n)$ be arbitrary.  Let $\cc\subseteq S_{n+1}$ be a random subset of $S_{n+1}$ in which each element of $S_{n+1}$ is selected for membership in $\cc$ with probability $p=\frac{\lr\log n-1+\frac{\log n}{2n}+ \frac{r}{n}\rr}{n}$. Let $X$ denote the number of elements of $S_n$ which are  not contained as order-isomorphic patterns of least one member  of $\cc$.  We have:
 
\[
r \rightarrow + \infty  \Rightarrow \p(X = 0) \to 1,
\]
and
\[ 
r \rightarrow - \infty\Rightarrow \p(X = 0) \to 0.\]

\end{thm}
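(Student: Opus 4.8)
\textit{Proof proposal.}
The plan is to write $X=\sum_{\pi\in S_n}I_\pi$, where $I_\pi$ is the indicator of the event that $\pi$ is not an order-isomorphic pattern of any member of $\cc$, set $\mu=\e(X)$, and run the by-now-standard two-sided scheme: Markov's inequality for the $r\to+\infty$ half (so that $\p(X=0)\ge 1-\mu$), and Janson's inequality --- exactly as in the proof of Theorem~\ref{Sidon} --- for the $r\to-\infty$ half, to force $\p(X=0)\to0$.

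The backbone is a uniform counting lemma: for \emph{every} $\pi\in S_n$, the number $N$ of $\sigma\in S_{n+1}$ containing $\pi$ is the same, namely $N=n^2+1$. This is seen by noting that each such $\sigma$ is obtained from $\pi$ by inserting one new symbol --- one of $n+1$ ``slots'' (including the two ends) together with one of $n+1$ possible ranks for the new value, so $(n+1)^2$ insertion operations in all --- and a short count (which I would record as a separate lemma; cf.\ \cite{aghk}) shows these $(n+1)^2$ insertions produce exactly $n^2+1$ distinct permutations regardless of $\pi$, the redundancy coming from pairs of insertions that differ only by moving an inserted value past an adjacent entry of $\pi$ of consecutive rank. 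Since $\cc$ is obtained by independent $p$-coin flips, $\p(I_\pi=1)=(1-p)^{n^2+1}$, hence $\mu=n!\,(1-p)^{n^2+1}$; plugging in the prescribed $p$ and estimating with Stirling's formula then shows that $p$ is calibrated exactly so that $\mu\to0$ when $r\to+\infty$ and $\mu\to\infty$ when $r\to-\infty$. The first half of the theorem follows at once from $\p(X=0)\ge 1-\mu$.

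For the second half, note that $I_\pi=1$ says precisely that the family $\cf_\pi:=\{\sigma\in S_{n+1}:\sigma\supseteq\pi\}$ is avoided by $\cc$ --- an increasing event in the complement $S_{n+1}\setminus\cc$, whose elements are present independently with probability $1-p$ --- so Janson's inequality applies after passing to complements. Declaring $\pi\sim\pi'$ when $\pi\ne\pi'$ and $\cf_\pi\cap\cf_{\pi'}\ne\emptyset$, Janson gives
\[\p(X=0)\ \le\ \Big(\prod_{\pi\in S_n}\bigl(1-(1-p)^{n^2+1}\bigr)\Big)e^{\Delta}\ \le\ e^{-\mu+\Delta},\qquad \Delta=\sum_{\pi\sim\pi'}\p(I_\pi I_{\pi'}=1),\]
so it suffices to prove $\Delta=o(\mu)$. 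Writing $N_{\pi,\pi'}=|\cf_\pi\cap\cf_{\pi'}|$, the avoided family $\cf_\pi\cup\cf_{\pi'}$ has size $2(n^2+1)-N_{\pi,\pi'}$, so $\p(I_\pi I_{\pi'}=1)=(1-p)^{2(n^2+1)-N_{\pi,\pi'}}$ and
\[\Delta=\frac{(1-p)^{n^2+1}}{n!}\,\mu\sum_{\pi\sim\pi'}(1-p)^{-N_{\pi,\pi'}}.\]

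Controlling this last sum is the step I expect to be the main obstacle, and it rests on two combinatorial inputs. First, each $\pi$ has at most $(n^2+1)n=O(n^3)$ neighbours, since each of its $n^2+1$ extensions contains at most $n$ other $n$-patterns. Second --- the delicate point --- one needs a uniform bound $N_{\pi,\pi'}=o(n^2)$ (in fact $O(n)$ should hold) for distinct $\pi,\pi'$: a permutation $\sigma\in S_{n+1}$ simultaneously witnessing two \emph{different} $n$-patterns is heavily constrained, because the $n-1$ positions of $\sigma$ outside the two deleted ones must be compatible with both $\pi$ and $\pi'$, so two genuinely different $n$-permutations cannot share more than a vanishing fraction of the $n^2+1$ extensions of either one. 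Granting these, $(1-p)^{-N_{\pi,\pi'}}=e^{(1+o(1))pN_{\pi,\pi'}}=n^{O(1)}$ since $p=O(\log n/n)$, whence $\sum_{\pi\sim\pi'}(1-p)^{-N_{\pi,\pi'}}\le n^{O(1)}\,n!$, and therefore $\Delta\le n^{O(1)}(1-p)^{n^2+1}\mu=o(\mu)$, because $(1-p)^{n^2+1}$ decays faster than any power of $n$. Then $\p(X=0)\le e^{-\mu(1-o(1))}\to0$ as $r\to-\infty$, completing the proof. Making the bound on $N_{\pi,\pi'}$ precise --- in effect, counting how many $(n+1)$-permutations can contain two prescribed distinct $n$-permutations --- is where the real work lies.
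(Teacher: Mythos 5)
The paper does not reprove Theorem~\ref{TCPerm}; it cites it from \cite{aghk} and restates the three combinatorial lemmas (Lemmas~\ref{nsqr}, \ref{ncube}, \ref{four}) it relies on. Your proposal is nevertheless a sound and essentially complete derivation, and it takes a genuinely different probabilistic route from the one the source paper (and the related Theorem~\ref{TLCPerm} here) uses: you apply Janson's inequality after passing to the complement $S_{n+1}\setminus\cc$, whereas \cite{aghk} uses the Stein--Chen coupling of Lemma~\ref{SC}. Janson is simpler and suffices for the one-sided $r\to-\infty$ conclusion; Stein--Chen additionally yields the full Poisson limit $\p(X=0)\to \exp\{-e^{-r_0}\}$ at the threshold $r\to r_0\in\mathbb R$, which is the bonus the source paper gets.

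Your setup is correct: $\mu=n!(1-p)^{n^2+1}$ by Lemma~\ref{nsqr}, and Stirling plus the prescribed $p$ gives $\log\mu = \tfrac12\log(2\pi)-r+o(1)$, so $\mu\to0$ or $\mu\to\infty$ as $r\to\pm\infty$; Markov handles the first half. The Janson computation is also set up correctly, and the degree bound $|\{\pi':\pi'\sim\pi\}|\le n^3$ you derive is exactly Lemma~\ref{ncube}. The one place you flag a gap --- a uniform bound on $N_{\pi,\pi'}=|\cf_\pi\cap\cf_{\pi'}|$ --- is in fact much better than the $O(n)$ you conjecture: Lemma~\ref{four} (from \cite{aghk}) gives $N_{\pi,\pi'}\le 4$ for all distinct $\pi,\pi'\in S_n$. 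With that, $(1-p)^{-N_{\pi,\pi'}}=1+O(p)$, so your $\Delta/\mu \le n^3(1-p)^{n^2+1}(1+o(1))$, which vanishes super-polynomially since $(1-p)^{n^2+1}\asymp e^{-n\log n+n}/\sqrt{n}$. You should cite or prove that co-covering bound rather than leave it as an expectation, but the structure of your argument --- and the fact that you only need a bound like $N_{\pi,\pi'}=O(1)$ or even $O(n)$ --- is sound.
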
 

In the main new results of this section, Theorem~\ref{TLCPerm} extends Theorem~\ref{TCPerm} to  $\lambda$-coverings, and Theorem~\ref{TPPerm} provides the complementary $\lambda$-packing result.  First, however, we state Lemmas~\ref{nsqr},~\ref{ncube}, and ~\ref{four} from \cite{aghk}, as they will each be useful for us.

\begin{lem}(\cite{aghk})\label{nsqr}
Let $c(n,\pi)$ denote the number of permutations in $S_{n+1}$ which cover a fixed $\pi \in S_n$. We have $c(n,\pi) = c(n,\pi^\prime) = n^2+1$ for all $\pi, \pi^\prime \in S_{n+1}$. 
\end{lem}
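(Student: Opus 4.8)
Lemma~\ref{nsqr} asserts that every $\pi \in S_n$ is covered by exactly $n^2+1$ permutations in $S_{n+1}$, a count that is independent of $\pi$. Here "cover" means: $\sigma \in S_{n+1}$ covers $\pi \in S_n$ if $\pi$ appears as an order-isomorphic subsequence of $\sigma$, i.e., deleting one entry of $\sigma$ and relabeling the remaining $n$ values order-isomorphically yields $\pi$.

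**The plan.** The natural approach is to count ordered pairs $(\sigma, i)$ where $\sigma \in S_{n+1}$ and deleting the entry in position $i$ of $\sigma$ produces (a pattern order-isomorphic to) $\pi$, and then to understand the fibers of the map $(\sigma,i) \mapsto \sigma$. Equivalently — and more cleanly — I would count from the $\pi$ side: build every $\sigma$ that covers $\pi$ by choosing \emph{where to insert a new entry}. Starting from $\pi \in S_n$ written as a word $\pi(1)\pi(2)\cdots\pi(n)$ on the value set $\{1,\dots,n\}$, one obtains an $(n+1)$-permutation covering $\pi$ by (a) choosing a position $j \in \{1,\dots,n+1\}$ at which to insert a new entry, and (b) choosing the \emph{rank} $r \in \{1,\dots,n+1\}$ of that new entry among all $n+1$ values; inserting a value of rank $r$ means bumping up by one all original values that were $\ge r$. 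This gives $(n+1)^2$ constructions. The key step is then to determine exactly how many distinct permutations $\sigma$ arise, by identifying which of these $(n+1)^2$ (position, rank) pairs yield the same $\sigma$ — and, crucially, also to check that every $\sigma$ covering $\pi$ is obtained (so deletions and insertions are genuinely inverse to each other up to this overcounting).

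**The key steps, in order.** First, set up the insertion construction precisely and verify surjectivity: if $\sigma$ covers $\pi$, then $\sigma$ is obtained by inserting some entry into $\pi$, so it arises in our list. Second, analyze the overcounting. Two pairs $(j,r)$ and $(j',r')$ give the same $\sigma$ exactly when the inserted entry is ``adjacent in value and position'' to an existing entry in a way that makes its placement ambiguous — concretely, inserting a new minimum just before the current position of value $1$ is the same as inserting it just after, and similarly for a new entry placed next to a consecutive-valued neighbor. I expect the clean way to see the count is: the number of collisions is exactly $n$ (one ambiguity for each of the $n$ entries of $\pi$, coming from the two ways to place a new entry of rank adjacent to and in a position adjacent to that entry), so the number of distinct $\sigma$ is $(n+1)^2 - n = n^2 + n + 1 - n = \ldots$ — wait, that gives $n^2+1$ only if the number of collisions is $n$ counted as pairs-merged, i.e. $(n+1)^2 - n = n^2 + n + 1$; so in fact the right bookkeeping must be that there are $2n$ ambiguous pairs forming $n$ merged classes of size $2$, removing $n$ from the total: $(n+1)^2 - n = n^2+n+1$. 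Since the target is $n^2+1$, the correct statement is that there are $2n$ pairs collapsing in coincidences that remove $n$, \emph{plus} one further global coincidence is \emph{not} present — so I would instead recount: the honest identity to nail down is $(n+1)^2 - n = n^2+1$ would require $n+1$ wait that is false. So the real content of the lemma is that the number of (position, rank) pairs that are ``wasted'' is exactly $n \cdot 1 + \binom{?}{}$; I would carefully enumerate: for each $i \in \{1,\dots,n\}$ there are exactly two representations of the permutation that inserts a value immediately adjacent in rank to $\pi(i)$ immediately adjacent in position, but consecutive such ambiguities chain together, so the $2(n+1)$ ``boundary'' insertions around the sorted sequence of values collapse pairwise. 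The cleanest route, which I would adopt, is a direct bijective argument: show the map sending $(\sigma, \text{deleted position})$ to $\sigma$ has total domain size $(n+1) \cdot (\text{\# deletions of }\sigma\text{ giving }\pi)$ summed over $\sigma$, equal to $n+1$ times something; alternatively, use the known formula for the number of ways $\pi \in S_n$ embeds into $S_{n+1}$ and invert.

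**Main obstacle.** The genuine difficulty is the collision analysis — pinning down exactly which of the $(n+1)^2$ insertion operations coincide, and organizing the chained coincidences at the extreme ranks/positions so that the final count is precisely $n^2+1$ rather than $n^2+n+1$ or similar. I would handle this by working with the ``staircase'' picture: place $\pi$ as lattice points $(k,\pi(k))$ in an $n\times n$ grid, so that inserting a new entry corresponds to choosing one of the cells of the $(n+1)\times(n+1)$ refined grid \emph{not} on a line through an existing point, and two cells give the same $\sigma$ iff they lie in the same connected region of the grid after removing the horizontal and vertical lines through the $n$ points $(k,\pi(k))$ — the number of such regions is a standard planar count, $(n+1)^2 - (\text{interior crossings})$, and for a permutation matrix the arrangement of $n$ horizontal and $n$ vertical lines inside the square has exactly $n^2$ bounded plus boundary cells... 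I expect the correct bookkeeping to yield exactly $n^2+1$ regions (this is the well-known fact that $n$ horizontal and $n$ vertical lines in general position partition the plane, and restricting to the relevant square with the permutation structure gives $n^2+1$ feasible insertion classes), and that surjectivity/injectivity onto covering permutations is then routine. I would present the region-counting argument as the heart of the proof, with the insertion/deletion correspondence stated as a preliminary observation and the extremal-case collisions checked explicitly.
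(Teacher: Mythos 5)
Your insertion framework and the observation that surjectivity is automatic (every $\sigma$ covering $\pi$ arises by inserting an entry into $\pi$) are the right start, but the collision analysis — which you correctly identify as the heart of the matter — is never actually carried out, and your running arithmetic betrays this: you repeatedly compute $(n+1)^2 - n = n^2+n+1$ and notice it is wrong without resolving the discrepancy. The correct excess is $2n$, not $n$. Concretely, index insertions by pairs $(j,r)$ with $j\in\{1,\dots,n+1\}$ (gap position) and $r\in\{1,\dots,n+1\}$ (rank of the new value). A short computation shows that two insertions $(j,r)$ and $(j',r')$ with $j<j'$ produce the same $\sigma$ only if $j'=j+1$, and that $(j,r)\sim(j+1,r')$ happens if and only if $\{r,r'\}=\{\pi(j),\pi(j)+1\}$ — and \emph{both} orderings work, i.e.\ $(j,\pi(j))\sim(j+1,\pi(j)+1)$ and $(j,\pi(j)+1)\sim(j+1,\pi(j))$. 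That gives exactly two collision edges per $j\in\{1,\dots,n\}$, hence $2n$ edges in total on the vertex set of $(n+1)^2$ cells. Since each edge strictly increases the $j$-coordinate, the collision graph is a disjoint union of paths (no cycles), so the number of connected components — which is the number of distinct covering permutations — is $(n+1)^2 - 2n = n^2+1$. Note that the collisions genuinely chain (for $\pi = 12\cdots n$, the cells $(1,1),(2,2),\dots,(n+1,n+1)$ all merge into a single class), so the ``merged classes of size $2$'' bookkeeping you float at one point is also wrong; what saves the count is the forest structure, $\#\mathrm{components} = V - E$.

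The ``staircase'' pivot as you describe it does not repair this. You say two cells coincide iff they lie in the same connected region after removing the horizontal and vertical lines through the $n$ points $(k,\pi(k))$ — but those $2n$ lines are precisely what carve the square into the $(n+1)^2$ cells in the first place, so ``same region'' is the trivial relation and gives $(n+1)^2$, not $n^2+1$. The geometric picture that does work is the one above: cells $(j,r)$ and $(j+1,r')$ are identified exactly when they touch the point $(k,\pi(k))$ for $k=j$ at opposite corners (diagonally across it, in either orientation), so you are gluing pairs of cells across the $n$ plotted points, twice per point, and the forest/component count finishes it. Until you write out this collision criterion explicitly and verify there are no cycles, the proof has a genuine gap rather than a routine omission.
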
  

\begin{lem}(\cite{aghk})\label{ncube}
For any $\pi \in S_n$, the set:

\[
\mathcal{J}_\pi:= \{\pi^\prime \in S_n: \pi \text{ and } \pi^\prime \text{ can be jointly covered by some } \rho\in S_{n+1}\}
\]

has cardinality at most $n^3$. 
\end{lem}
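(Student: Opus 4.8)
The plan is to bound $|\mathcal{J}_\pi|$ by counting, for each fixed $\pi \in S_n$, the pairs $(\rho, \pi')$ with $\rho \in S_{n+1}$ covering both $\pi$ and $\pi'$, and then observing that this overcounts $|\mathcal{J}_\pi|$ by at most a constant factor. First I would fix $\pi \in S_n$. By Lemma~\ref{nsqr}, the number of $\rho \in S_{n+1}$ which cover $\pi$ is exactly $n^2+1$. Next, for each such fixed $\rho$, I would count how many $\pi' \in S_n$ are covered by that particular $\rho$: since $\rho$ has $n+1$ symbols and covering $\pi'$ means deleting one symbol from $\rho$ and taking the order-isomorphic pattern, there are at most $n+1$ distinct $n$-permutations obtainable as patterns of a single $(n+1)$-permutation $\rho$. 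Therefore the number of pairs $(\rho, \pi')$ with $\rho$ covering both $\pi$ and $\pi'$ is at most $(n^2+1)(n+1)$.

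Every $\pi' \in \mathcal{J}_\pi$ arises as the second coordinate of at least one such pair (by definition of $\mathcal{J}_\pi$ there is some $\rho$ jointly covering $\pi$ and $\pi'$), so $|\mathcal{J}_\pi| \leq (n^2+1)(n+1)$. This is slightly larger than $n^3$, so to land exactly on the stated bound $n^3$ I would tighten one of the two counts: for instance, observe that among the $n+1$ patterns obtained by deleting a symbol from $\rho$, one of them is $\pi$ itself, so each fixed $\rho$ covering $\pi$ contributes at most $n$ permutations $\pi' \neq \pi$ to $\mathcal{J}_\pi$, giving $|\mathcal{J}_\pi| \leq (n^2+1)\cdot n = n^3 + n$; alternatively one notes $\mathcal{J}_\pi$ as defined may or may not include $\pi$ itself, and a marginally sharper accounting of collisions (distinct $\rho$'s yielding the same $\pi'$) absorbs the lower-order terms. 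Since the paper only needs an $O(n^3)$-type bound for the Stein--Chen estimates downstream, I would not belabor the exact constant; the clean statement is that the crude pair-counting bound $(n^2+1)(n+1)$ is $O(n^3)$, and a one-line refinement removes the lower-order slack to reach $n^3$ if the literal inequality is wanted.

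The main obstacle, such as it is, is purely bookkeeping: making sure the count "at most $n+1$ patterns per $\rho$" is justified (it is immediate — there are only $n+1$ positions to delete) and then deciding how much care to spend recovering the exact constant $n^3$ versus the asymptotically equivalent $O(n^3)$. There is no real combinatorial difficulty here; the lemma is a union bound over the $\rho$'s covering $\pi$ combined with the trivial observation that one $(n+1)$-permutation covers only boundedly (linearly) many $n$-permutations.
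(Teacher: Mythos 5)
The paper does not reprove this lemma; it cites it directly from \cite{aghk}, so there is no in-text proof to compare against. Your counting strategy --- sum over the $n^2+1$ covers $\rho$ of $\pi$, bound the number of $\pi'$ each $\rho$ can yield --- is natural and certainly delivers the $O(n^3)$ bound the paper actually uses in its Stein--Chen estimates. However, the final step of your argument is hand-waved and does not, as stated, reach the literal bound $n^3$. You correctly observe that each $\rho$ covering $\pi$ yields at most $n$ permutations $\pi'\neq\pi$, giving $|\mathcal{J}_\pi|\le (n^2+1)\cdot n = n^3+n$, but the assertion that ``a marginally sharper accounting of collisions absorbs the lower-order terms'' is not demonstrated and is not automatic: you would need to exhibit at least $n$ repeated pairs $(\rho,\pi')$, and nothing in your set-up forces that.

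A cleaner route, which I suspect is the one in \cite{aghk}, counts via the shared \emph{lower} pattern rather than the shared cover. If some $\rho\in S_{n+1}$ covers both $\pi$ and $\pi'$ (via deletions at positions $i\ne j$), then deleting both positions from $\rho$ gives an $(n-1)$-permutation $\sigma$ that is a pattern of both $\pi$ and $\pi'$. Thus every $\pi'\in\mathcal{J}_\pi$ shares an $(n-1)$-pattern with $\pi$. There are at most $n$ choices for $\sigma$ (delete one of the $n$ entries of $\pi$), and by Lemma~\ref{nsqr} applied one level down, each $\sigma$ extends to at most $(n-1)^2+1$ permutations of $S_n$. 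This gives $|\mathcal{J}_\pi|\le n\bigl((n-1)^2+1\bigr)=n^3-2n^2+2n\le n^3$ outright, with no slack to absorb. I would either switch to this decomposition, or explicitly downgrade your claim to $|\mathcal{J}_\pi|=O(n^3)$ and note that this suffices for the paper's application.
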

 
 \begin{lem}(\cite{aghk})\label{four}
 For any $\pi, \pi^\prime \in S_{n}$, the set:
 
 \[
 C_{\pi, \pi^\prime} := \{ \rho \in S_{n+1}: \rho \text{ covers } \pi \text{ and } \pi^\prime \text{ jointly}\}.
 \]
 has cardinality at most $4$. 
 \end{lem}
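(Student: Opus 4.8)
The plan is to prove Lemma~\ref{four} by a direct structural argument about how an $(n+1)$-permutation $\rho$ can cover a fixed $n$-permutation $\pi$. Recall that $\rho \in S_{n+1}$ covers $\pi \in S_n$ precisely when deleting one of the $n+1$ positions of $\rho$ and reading off the relative order of the remaining $n$ entries yields $\pi$. So for each $\rho$ there are at most $n+1$ ``deletion positions'' that witness coverage of a given permutation, but the real content is that once we demand $\rho$ cover \emph{two} prescribed permutations $\pi$ and $\pi'$ simultaneously, the witnessing data is extremely rigid.

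First I would set up notation: write $\rho = (\rho_1, \ldots, \rho_{n+1})$ and, for a position $i \in [n+1]$, let $\rho^{(i)} \in S_n$ denote the pattern obtained by deleting $\rho_i$ and renormalizing. Then $\rho \in C_{\pi,\pi'}$ means there exist positions $i$ and $j$ with $\rho^{(i)} = \pi$ and $\rho^{(j)} = \pi'$. The key observation is that $\rho$ is \emph{determined} by the pair $(\pi, i)$ together with the value $\rho_i$ relative to the others — in fact, knowing $\pi$ and which position $i$ was deleted and where $\rho_i$ ranks among $\{1, \ldots, n+1\}$ pins down $\rho$ completely (it is $\pi$ with a new symbol inserted at a known slot with a known rank). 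So I would count pairs $(i,j)$ of distinct deletion positions: since $\rho^{(i)} = \pi$ already determines $\rho$ up to the choice of rank of $\rho_i$, and then requiring $\rho^{(j)} = \pi'$ for a \emph{second} position imposes a strong compatibility condition. The plan is to argue that the set of admissible $(i,j)$ pairs — and hence the set of resulting $\rho$ — has size at most $4$; heuristically $\rho$ is obtained from $\pi$ by inserting a symbol in one slot and from $\pi'$ by inserting a symbol in another slot, and the two insertions must ``agree,'' which typically leaves only the two insertion positions as near the ends or adjacent, giving a bounded count.

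The cleanest route I would actually pursue: observe that if $\rho$ covers both $\pi$ and $\pi'$ via deletions at positions $i < j$, then deleting \emph{both} positions $i$ and $j$ from $\rho$ gives a common $(n-1)$-pattern $\sigma$ that is simultaneously a one-point deletion of $\pi$ and of $\pi'$. So $\pi$ and $\pi'$ must share a common $(n-1)$-subpattern $\sigma$, and $\rho$ is recovered from $\sigma$ by inserting two symbols. For fixed $\pi, \pi'$, I would bound the number of such common $\sigma$ and, for each, the number of ways to reinsert two symbols consistently; the upshot is the bound $4$. The main obstacle — and the step I expect to require the most care — is the bookkeeping of ranks and positions under these insertions/deletions (handling the renormalization of values correctly, and the case analysis of whether the two inserted symbols land in the same relative region), to verify that the count really is capped at $4$ and not merely at some larger constant; this is a finite but fiddly combinatorial case check rather than anything conceptually deep. (Since Lemma~\ref{four} is quoted from \cite{aghk}, in the paper itself one would simply cite it, but the above is how I would reconstruct the proof.)
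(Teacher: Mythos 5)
The paper does not prove Lemma~\ref{four}; it simply quotes it from~\cite{aghk}, so there is no in-paper argument to compare against. Taking your sketch on its own terms: the structural observation you lead with is correct and is the right object to look at --- if $\rho$ covers $\pi$ by deleting position $i$ and covers $\pi'$ by deleting position $j \neq i$ (and $i \neq j$ is forced once $\pi \neq \pi'$), then deleting both positions from $\rho$ produces an $(n-1)$-pattern $\sigma$ that is simultaneously a one-point deletion of $\pi$ and of $\pi'$, and $\rho$ is recovered from $\sigma$ by reinserting two symbols whose positions and relative ranks are tightly constrained.

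But the entire content of the lemma is the exact constant $4$, and your sketch nowhere establishes it; you defer it to ``a finite but fiddly combinatorial case check'' that you do not carry out. That is not a small bookkeeping gap --- it is the theorem. Moreover, the decomposition you propose does not obviously deliver $4$ by multiplying two small factors. The number of position pairs $(p,q)$ with $\pi$ minus position $p$ order-isomorphic to $\pi'$ minus position $q$ is not itself bounded by a small constant in any evident way: already for $\pi = 12$, $\pi' = 21$ every one of the four pairs $(p,q) \in \{1,2\}^2$ qualifies, since every single-entry deletion of a $2$-permutation is the trivial pattern. On top of that, a fixed $(p,q)$ can give rise to two distinct $\rho$'s (depending on the relative rank of the two reinserted values, and on whether $a<b$ or $a>b$), while the same $\rho$ can also be reached from several different $(p,q)$'s --- e.g.\ $\rho = 132$ covers $12$ via deletion at position $2$ or $3$, so it is charged to more than one pair. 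So the bound $4$ has to come from an argument about how these overcounts collapse, not from either factor being small; until you actually produce that argument, what you have is a plan rather than a proof.
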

 
 We are now ready to state Theorem~\ref{TLCPerm}:
 
\begin{thm}\label{TLCPerm}

Let $r = r(n)$ be arbitrary and let $\lambda \geq 1$.  Let $\cc\subseteq S_{n+1}$ be a random subset of $S_{n+1}$ in which each element of $S_{n+1}$ is selected for membership in $\cc$ with probability $$p=\frac{1}{n^2} \lc n\ln n - n +(\lambda-1) \ln n + (\lambda-1) \ln \ln n - \ln(\lambda-1)!+\frac{\ln n}{2}+r \rc.$$ Let $X=X_\lambda$ denote the number of elements of $S_n$ which are  not covered  by at least $\lambda$ members  of $\cc$.  We have:
 
\[
r \rightarrow + \infty  \Rightarrow \p(X = 0) \to 1,
\]
and
\[ 
r \rightarrow - \infty\Rightarrow \p(X = 0) \to 0.\]

\end{thm}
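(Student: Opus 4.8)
The plan is to follow the Stein-Chen strategy used for Theorem~\ref{TGDesign}, since this is again a $\lambda$-covering statement of exactly that shape. Write $X=\sum_{\pi\in S_n}I_\pi$, where $I_\pi=1$ iff $\pi$ is covered by at most $\lambda-1$ members of $\cc$. The indicators are positively related by the same deselection coupling: given that some $\pi$ is covered $\ge\lambda$ times, deselect a suitable (random) number of the covering $(n+1)$-permutations to land in the event $I_\pi=1$; deselection can only decrease coverage counts, so $J_{\pi,\sigma}\ge I_\sigma$ for $\sigma\ne\pi$. By Lemma~\ref{SC}(2) this gives
\[
\tv(\cl(X),\P(\mu))\le\frac{1-e^{-\mu}}{\mu}\lr\v(X)-\mu+2\sum_\pi\p^2(I_\pi=1)\rr,
\]
and as in Theorem~\ref{TGDesign} it suffices to show the right-hand side is $o(1)$, since then $\p(X=0)=e^{-\mu}+o(1)$ and the conclusion follows from monotonicity once we locate the mean.

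First I would compute $\mu=\e(X)$. By Lemma~\ref{nsqr}, each fixed $\pi\in S_n$ is covered by exactly $n^2+1$ members of $S_{n+1}$, so the coverage count of $\pi$ is $\mathrm{Bin}(n^2+1,p)$ and
\[
\mu=n!\sum_{j=0}^{\lambda-1}\binom{n^2+1}{j}p^j(1-p)^{n^2+1-j}.
\]
With the stated $p$ we have $p(n^2+1)=n\ln n-n+(\lambda-1)\ln n+(\lambda-1)\ln\ln n-\ln(\lambda-1)!+\tfrac12\ln n+r+o(1)\to\infty$, so Lemma~\ref{tails} collapses the sum to its last term, giving $\mu=n!\,\frac{(n^2)^{\lambda-1}}{(\lambda-1)!}p^{\lambda-1}e^{-p(n^2+1)}(1+o(1))$. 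Using Stirling on $n!$ (this is the source of the $n\ln n-n$ and $\tfrac12\ln n$ pieces, matching Theorem~\ref{TCPerm}) one checks that the stated $p$ makes $\mu=\Theta(e^{-r})$, so $r\to+\infty\Rightarrow\mu\to0$ and $r\to-\infty\Rightarrow\mu\to\infty$, which is exactly what we need.

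The bulk of the work, as in Theorem~2.4, is bounding $\v(X)-\mu+2\sum\p^2(I_\pi=1)$, equivalently $\sum_{\pi\ne\pi'}[\e(I_\pi I_{\pi'})-\e(I_\pi)\e(I_{\pi'})]$. Two permutations $\pi,\pi'$ contribute a genuinely positive covariance only when they can be jointly covered by a common $\rho\in S_{n+1}$; by Lemma~\ref{ncube} each $\pi$ has at most $n^3$ such partners $\pi'$, and by Lemma~\ref{four} the set $C_{\pi,\pi'}$ of common covers has size at most $4$. So I would split the coverage count of $\pi$ into the (at most $4$) permutations in $C_{\pi,\pi'}$ plus the remaining $\ge n^2-3$ private ones, expand $\e(I_\pi I_{\pi'})$ as a finite sum over how many common covers and how many private covers each gets (the analogue of $\rho_{r,s,u,v}$), and use that $\binom{n^2+O(1)}{x}p^x$ is increasing in $x$ (since $n^2p=\omega(1)$) to see the dominant term has both $\pi,\pi'$ covered exactly $\lambda-1$ times with no shared cover selected. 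This yields a per-pair covariance of order $\p^2(I_1=1)\cdot(\text{correction})$ where the correction, summed against the $\le n^3$ partners and divided by $\mu$, contributes a factor like $n^3\cdot p\cdot(\text{ratio of binomials})=O\!\lr\frac{n^3 p}{n^2}\rr=O\!\lr\frac{\ln n}{1}\rr$ times $\mu$... and here is the main obstacle: one must verify the numerology actually closes, i.e. that $n^3$ (from Lemma~\ref{ncube}) times the single-cover ``penalty'' $p\approx \frac{\ln n}{n}$ times $\mu$, after dividing by $\mu$, still tends to $0$ — in the design case the analogous product was $\frac{\mu\ln n}{n}$, which needed $\mu$ not too large. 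I expect the same caveat: the variance estimate forces $\mu$ to grow slower than some power of $n$, and one then argues (exactly as in the last paragraph of the proof of Theorem~2.4, via monotonicity) that it is enough to treat $\mu=\Theta(1)$, i.e. $r=O(1)$, because larger/smaller $r$ only pushes $\p(X=0)$ further toward $0$/$1$. The first half ($r\to+\infty$) is the easy direction: Markov with $\p(X\ge1)\le\mu\to0$. So the plan is: (i) set up the coupling and invoke Lemma~\ref{SC}(2); (ii) compute $\mu$ via Lemmas~\ref{nsqr},~\ref{tails} and Stirling; (iii) bound the covariance sum using Lemmas~\ref{ncube},~\ref{four} exactly as $\rho$ was handled in Theorem~2.4; (iv) conclude $\p(X=0)=e^{-\mu}+o(1)$ and finish by monotonicity in $r$.
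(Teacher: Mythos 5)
Your proposal is correct and matches the paper's proof of Theorem~\ref{TLCPerm} essentially step for step: first half by Markov, Lemma~\ref{tails}, and Lemma~\ref{nsqr}; second half by the Stein--Chen deselection coupling giving positive relation, followed by a covariance bound built from Lemmas~\ref{ncube} and~\ref{four}. The one place where your rough estimate is more pessimistic than necessary is the ``does the numerology close?'' worry. In the design case (Theorem~\ref{TGDesign}) the bound was of the form $\mu\ln n/n$, which genuinely forces $\mu$ to be controlled and requires the monotonicity argument to extend to all $r$. Here the cancellation is stronger: the paper shows $\max_{i\sim j}\p(I_iI_j=1)=\rho^2(1+o(1))$ with $\rho=\binom{n^2+1}{\lambda-1}p^{\lambda-1}(1-p)^{n^2+2-\lambda}$, so the numerator of the Stein--Chen bound is $o(1)\rho^2$ rather than $O(\rho^2)$, and after dividing by $\rho$ and multiplying by the $n^3$ from Lemma~\ref{ncube} one gets $o(1)\,n^3\rho$. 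Since $\rho\approx\mu/n!$ is super-polynomially small whenever $n^2p=\omega(\log n)$ (which holds for the stated $p$ unless $r$ is so negative as to be comparable to $-n\ln n$), the total-variation distance tends to $0$ across the whole practically relevant range, and monotonicity is needed only as a final safety net for extreme $r$ --- exactly as the paper handles it, though somewhat implicitly. So your four-step plan (coupling, compute $\mu$, bound covariances via $C_{\pi,\pi'}$, conclude via $e^{-\mu}$ and monotonicity) is the paper's proof; you'd simply find on carrying out step (iii) that the bound is much more forgiving than in Theorem~\ref{TGDesign}.
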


\begin{proof}
Lemma~\ref{nsqr} states that each member of $S_n$ is covered by precisely $n^2+1$ members of $S_{n+1}$. Moreover, $n!p \rightarrow \infty$. Hence,  by Markov's inequality, Lemma~\ref{tails}, and Lemma~\ref{nsqr} we have:

\begin{align*}
\p(X \geq 1) &\leq \e(X)\\
& = n! \sum_{j =0}^{\lambda-1}\binom{n^2+1}{j} p^j (1-p)^{n^2+1-j}\\
& = n!\binom{n^2+1}{\lambda-1}p^{\lambda-1}{(1-p)}^{n^2 -\lambda +2}(1+o(1))\\
& = \sqrt{2 \pi n} \left(\frac{n}{e}\right)^n \frac{(n^2+1)^{\lambda-1}}{(\lambda-1)!}p^{\lambda-1} \exp\lc -pn^2 (1+o(1)) \rc (1+o(1))\\
&\to0\quad(r\to\infty), \numberthis \label{cpermex}
\end{align*}
proving the first part of the result. For the second part of the theorem, we employ the Stein-Chen method, noting first that $\e(X)\to\infty$ with $p$ as above and $r\to-\infty$.   Following the process in   \cite{aghk}, we begin by setting $$X=\sum_{j=1}^{n!}I_j,$$
where for each $j \in [n!]$, we set  $I_j=1$  if the $\pi_j$  is covered $\lambda-1$ or fewer times ($I_j=0$ otherwise). As before, for each $j\in [n!]$, we seek a coupling $\{J_{ji}\}_{1\le i\le{n!}}$ that satisfies:

\[\cl\lr J_{j1},\ldots, J_{j{n!}}\rr=\cl\lr I_1,\ldots, I_{n!}\vert I_j=1\rr,\]  

We (partially) exhibit this coupling as follows:  If $I_j=1$, i.e., if $\pi_j$ is covered by at most $\lambda-1$ $(n+1)$-permutations, we let $J_{ji}=I_i$ for each $i$. On the other hand, if the $\pi_j$ is covered $\lambda$ or more times, we deselect a certain number of $(n+1)$-permutations (according to the appropriate distribution) so as to achieve a sample outcome corresponding to $I_j=1$.  We then set $J_{ji}=1$ if the $\pi_i$  is covered $\lambda-1$ or fewer times after this is done.  As before,  the conditional distribution is attained by a process of deselection, so we  must have $J_{ji}\ge I_i$ for each $i\ne j$ so that the indicators are positively related. Via Lemma~\ref{SC}, we see that:

\begin{eqnarray}
\tv(\cl(X), \P(\mu))
&\leq&\frac{1}{\mu}\left(\v(X) -\mu +2\sum_{i=1}^{n!} \p^2(I_i =1)\right)\label{tv}
\end{eqnarray}
For $i,j \in [n!]$ write $i\sim j$ whenever $I_i$ and $I_j$ are not independent events, so:
\[
\v(X) = \sum_j \lr\e(I_j) - \e^2(I_j)\rr + \sum_{i \sim j} \lr\e(I_iI_j) - \e(I_i) \e(I_j)\rr.
\]
By Lemma~\ref{ncube}, $I_i$ is not independent of only the members of $\mathcal{J}_{I_i}$, which has cardinality at most $n^3$. Moreover, by Lemma~\ref{nsqr}, $\pi_i$ is covered by precisely $n^2+1$ $(n+1)$-permutations.  Hence, we have:

\begin{align*}
\frac{\v(X)}{\mu} - 1 &\leq \frac{\sum_{i \sim j} (\e(I_iI_j) - \e(I_i) \e(I_j))}{\mu}\\
& \leq n^3\left(\frac{\max_{i \sim j}\p(I_iI_j =1)- \binom{n^2+1}{\lambda-1}^2p^{2\lambda-2} (1-p)^{2n^2 +4 -2\lambda}}{\binom{n^2+1}{\lambda-1}p^{\lambda-1}(1-p)^{n^2+2-\lambda}(1+o(1))}\right)
\end{align*}
Plugging into inequality \eqref{tv}, we see 
\begin{eqnarray}
\tv(\cl(X), \P(\mu)) &\leq& {n^3}\left(\frac{\max_{i \sim j}\p(I_iI_j =1)- \binom{n^2+1}{\lambda-1}^2p^{2\lambda-2} (1-p)^{2n^2 +4 -2\lambda}}{\binom{n^2+1}{\lambda-1}p^{\lambda-1}(1-p)^{n^2+2-\lambda}(1+o(1))}\right)\nonumber\\{}&&+2\frac{\sum_{i=1}^{n!} \p^2(I_i =1)}{\mu} \label{tv2}
\end{eqnarray}
Consider the second term.  By Lemma 2.2,
\begin{align*}
2\frac{\sum_{i=1}^{n!} \p^2(I_i =1)}{\mu} & =
2 \p(I_1 =1)\\
&=\binom{n^2+1}{\lambda-1}p^{\lambda-1}(1-p)^{n^2+2-\lambda}(1+o(1))\\
&\le\frac{n^{2\lambda-2}}{(\lambda-1)!}p^{\lambda-1}e^{-n^2p}(1+ o(1)) \\
&=o(1),
\end{align*}
and we now seek to show that the first term in (\ref{tv2}) is $o(1)$ as well. Fix $i,j \in [n!]$. Since by Lemma \ref{four}, $\pi_{i}$ and $\pi_{j}$ are co-coverable by at most $4$ members of $S_{n+1}$, we let $a_{i,j}\le4$ denote the number of permutations which co-cover $\pi_i$ and $\pi_j$ and denote by $A_{i, j}$the number of these $a_{i,j}$ permutations which are selected. 
Then by Lemmas \ref{nsqr} and \ref{tails} we have that:
\begin{align*}
\p(I_i I_j = 1) =&\sum_{t=0}^{a_{i,j}}\p(A_{i,j}=t)\p(I_iI_j=1\vert A_{i,j}=t)\\
&\le \sum_{t=0}^{a_{i,j}}\p(I_iI_j=1\vert A_{i,j}=t)\\
& \leq 
 \lr\sum_{r = 0}^{\lambda-1-t}  \binom{n^2 +1-t}{r}p^r(1-p)^{n^2+1-t-r}\rr^2 \\
&\le  \lr\sum_{r = 0}^{\lambda-1}  \binom{n^2 +1-t}{r}p^r(1-p)^{n^2+1-t-r}\rr^2 \\
& =\rho^2(1+o(1)),
\end{align*}
where we denote the probability of $\lambda-1$ successes in $n^2+1$ Bernoulli trials by $\rho$. Plugging into (\ref{tv2}), we have:

\begin{eqnarray}
\tv(\cl(X), \P(\mu)) & \leq& n^3 \left(\frac{(1+o(1))\rho^2 - \rho^2}{\rho}\right) +o(1)\nonumber\\
& =&o(1)n^3\rho\nonumber\\
&\to&0
\end{eqnarray}  
since
\[\rho=O\lr n^{2\l-2}p^{\l-1}e^{-n^2p}\rr,\]which tends to zero if $p=O(\log n/n)$.


Thus $\tv(\cl(X), \P(\mu)) = o(1)$, and in particular there exists $\epsilon_n = o(1)$ such that:
\[
e^{-\mu} - \epsilon_n \leq P(X =0) \leq e^{-\mu} + \epsilon_n.
\]
We know as noted above that $e^{-\mu} \rightarrow 0$ when $r \rightarrow -\infty$, so $P(X = 0) \rightarrow 0$, as desired.
\hfill\end{proof}

Theorem~\ref{TPPerm} establishes a $\lambda$-packings threshold for permutations.

\begin{thm}\label{TPPerm}

 Let $\cc\subseteq S_{n+1}$ be a random subset of $S_{n+1}$ in which each element of $S_{n+1}$ is selected for membership in $\cc$ with probability $p$. Let $X=X_\lambda$ denote the number of elements of $S_n$ which are  not covered  by at most $\lambda$ members  of $\cc$.  We have:
 
\[
p\ll\frac{1}{n^2}\frac{1}{n!^{1/(\l+1)}} \Rightarrow \p(X = 0 )\to 1,
\]
and
\[ 
p\gg\frac{1}{n^{2\l/(\l+1)}}\frac{1}{n!^{1/\l+1}}\Rightarrow\p(X=0)\to0.\]

\end{thm}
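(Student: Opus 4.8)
The plan is to follow the template of Theorems~\ref{Tballs} and~\ref{PDesign}: establish the first (whp) part by a first–moment/Markov bound, and the second (wlp) part via Talagrand's inequality, exploiting that ``$\cc$ is a $\lambda$-packing'' is a decreasing property. Write $X=\sum_{j=1}^{n!}I_j$, where $I_j$ is the indicator that $\pi_j\in S_n$ is covered by at least $\lambda+1$ members of $\cc$. By Lemma~\ref{nsqr} each $\pi_j$ is covered by exactly $n^2+1$ permutations of $S_{n+1}$, so $\p(I_j=1)=\sum_{r=\lambda+1}^{n^2+1}\binom{n^2+1}{r}p^r(1-p)^{n^2+1-r}$ and $\e(X)=n!\,\p(I_1=1)$. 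When $p\ll n^{-2}(n!)^{-1/(\lambda+1)}$ we have $(n^2+1)p\to 0$, so Lemma~\ref{tails} gives $\e(X)=n!\binom{n^2+1}{\lambda+1}p^{\lambda+1}(1-p)^{n^2-\lambda}(1+o(1))=\Theta\!\big(n!\,n^{2(\lambda+1)}p^{\lambda+1}\big)$, which is $o(1)$ on this range of $p$; Markov's inequality then yields $\p(X=0)\to1$.

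For the second part I would invoke Talagrand's inequality as in the proof of Theorem~\ref{Tballs}, but now paying attention to the Lipschitz constant. Resampling the coin deciding membership of a single $\rho\in S_{n+1}$ changes $X$ by at most $n+1$, since a length-$(n+1)$ word has only $\binom{n+1}{n}=n+1$ order-isomorphic subpatterns of length $n$; hence $X$ is $(n+1)$-Lipschitz. As before, $\{X\ge s\}$ is certified by exhibiting, for each of $s$ over-covered patterns, $\lambda+1$ of the selected permutations that cover it, so $X$ is $s(\lambda+1)$-certifiable. The $(n+1)$-Lipschitz form of Talagrand's inequality, together with the mean/median comparison (Fact~10.1 of \cite{mr}), then gives $\p(X=0)\le 2\exp\!\big(-\mathrm{Med}(X)/(4(n+1)^2(\lambda+1))\big)$, so $\p(X=0)\to 0$ as soon as $\e(X)/n^{2}\to\infty$. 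This is the point where the argument genuinely departs from Theorem~\ref{PDesign}: there the Lipschitz constant $\binom{k}{t}$ was bounded, so $\e(X)\to\infty$ sufficed, whereas here the growing Lipschitz constant $n+1$ costs a factor of $n^{2}$, and this loss is precisely why the two thresholds in the statement differ by a factor of $n^{2/(\lambda+1)}$ instead of coinciding.

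It remains to locate $p$ for which $\e(X)\gg n^{2}$. From the estimate $\e(X)=\Theta\big(n!\,n^{2(\lambda+1)}p^{\lambda+1}\big)$, valid whenever $n^{2}p\to0$, the condition $\e(X)\gg n^{2}$ is equivalent to $p\gg (n!)^{-1/(\lambda+1)}n^{-2\lambda/(\lambda+1)}$. Since $n^{2}\ll n!$ we have $(n!)^{-1/(\lambda+1)}n^{-2\lambda/(\lambda+1)}\ll n^{-2}$, so the band $(n!)^{-1/(\lambda+1)}n^{-2\lambda/(\lambda+1)}\ll p\ll n^{-2}$ is nonempty, and on it $\e(X)\gg n^2$, whence $\p(X=0)\to0$ by the previous paragraph. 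For a general $p$ with $p\gg (n!)^{-1/(\lambda+1)}n^{-2\lambda/(\lambda+1)}$ one chooses a comparison probability $p'$ inside this band with $p'\ll p$ (e.g.\ $p'=\sqrt{(n!)^{-1/(\lambda+1)}n^{-2\lambda/(\lambda+1)}\cdot\min(p,n^{-2})}$); since $\{X=0\}$ is decreasing in the coordinates of $\cc$, the standard coupling gives $\p_p(X=0)\le\p_{p'}(X=0)\to0$.

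The main obstacle — and the only essentially new ingredient beyond Theorems~\ref{Tballs} and~\ref{PDesign} — is the unbounded Lipschitz constant: one must (i) pin down that it equals $n+1$ rather than something larger, so the concentration step loses only a factor $n^{2}$, and (ii) check that the mean/median comparison still yields $\mathrm{Med}(X)=(1-o(1))\e(X)$ once $\e(X)\gg n^{2}$ (indeed, if $\e(X)=n^{2}\omega$ with $\omega\to\infty$ the comparison bound is $O(n\sqrt{\e(X)})=O(n^{2}\sqrt{\omega})=o(\e(X))$, and also $\gg n^{2}$). The remaining work is routine: the binomial-tail asymptotics supplied by Lemma~\ref{tails} and the monotonicity bookkeeping described above.
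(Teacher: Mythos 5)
Your proposal is correct and follows essentially the same route as the paper's own proof: Markov's inequality with Lemma~\ref{tails} for the first half, and Talagrand's inequality (with the $(n+1)$-Lipschitz constant, $s(\lambda+1)$-certifiability, the median–mean comparison from Fact~10.1 of \cite{mr}, and the resulting requirement $\e(X)\gg n^2$) for the second half, with the same explanation for the $n^{2/(\lambda+1)}$ gap between the two thresholds. If anything, you are slightly more careful than the paper in flagging that the lower bound $\e(X)=\Theta(n!\,n^{2(\lambda+1)}p^{\lambda+1})$ requires $n^2p\to0$ and then closing the gap by monotonicity, a step the paper leaves implicit.
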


\begin{proof}
As before, let $S_n = \{\pi_i\}_{i=1}^{n!}$, and let $X = \sum_{i=1}^{n!} I_i$, where $I_i$ is the indicator function for the event that $\pi_i$ is covered by at least $\lambda+1$ members of $S_{n+1}$.  We establish the first half of the theorem by Markov's inequality and Lemma 4.2:

\begin{eqnarray*}\p(X\ge 1)\le\e(X)&=&n!\sum_{j=\l+1}^{n^2+1}{{n^2+1}\choose{j}}p^j(1-p)^{n^2+1-j}\\
&\le&n!{{n^2+1}\choose{\l+1}}p^{\l+1}(1-p)^{n^2-\l}(1+o(1))\\
&\le&n!\frac{n^{2\l+2}}{(\l+1)!}p^{\l+1}(1+o(1))\\
&\to&0
\end{eqnarray*}
whenever $p \ll\frac{1}{n^2}\frac{1}{n!^{1/(\l+1)}}$. To establish the second part of the theorem, we seek to employ Talagrand's inequality. $X$ is $(n+1)$-Lipschitz, as reflipping the coin to determine membership of any $(n+1)$-permutation in $\cc$ can affect the value of $X$ by at most $(n+1)$, as each member of $S_{n+1}$ covers at most $n+1$ members of $S_n$. Moreover, $X$ is $s(\lambda+1)$-certifiable as the event $\{X \ge s\}$ can be certified by the outcomes of $s(\lambda+1)$ trials. 
We apply Talagrand's Inequality as in Theorem~\ref{Tballs} 
to see: 

\[\p(X=0)\le 2\exp\{-{\rm Med}(X)/(4(n+1)^2(\l+1))\}\to0\]
provided that ${\rm Med}(X)\gg n^2$. Since the median and mean of $X$ differ by at most $40(n+1)\sqrt{(\lambda+1)\e(X)}\le120n\sqrt{\l\e(X)}$ as per Fact 10.1 in \cite{mr}, we have that \[\left\vert\frac{{\rm Med}(X)}{\e(X)}-1\right\vert\le\frac{120n\sqrt{\l}}{\sqrt{\e(X)}}\to0\]
if $\e(X)\gg n^2$.  Thus ${\rm Med}(X)/\e(X)\to1$ if $\e(X)\gg n^2$ and hence $\p(X=0)\to 0$ if $\e(X)\gg n^2$.  But 
\begin{eqnarray*}\e(X)&=&n!\sum_{j=\l+1}^{n^2+1}{{n^2+1}\choose{j}}p^j(1-p)^{n^2+1-j}\\
&\ge&n!{{n^2+1}\choose{\l+1}}p^{\l+1}(1-p)^{n^2-\l}\\
&\ge&n!\frac{n^{2\l+2}}{(\l+1)!}p^{\l+1}(1+o(1))\\
& \gg& n^2
\end{eqnarray*}
if $p\gg\frac{1}{n^{2\l/(\l+1)}}\frac{1}{n!^{1/\l+1}}$, as desired. We remark  that the slight gap in the upper and lower threshold in Theorem~\ref{TPPerm}  is an artifact of Talagrand's inequality.  That the true single threshold is at $\frac{1}{n^2}\frac{1}{n!^{1/(\l+1)}}$ might be provable via a pedestrian technique such as the second moment method.
 \end{proof}

\section{Union-Free Families}

Let $\mathcal{P}([n])$ denote the power set of $[n]$. We say that a family $\cc \subseteq \mathcal{P}([n])$ is {\em weakly union-free} if there are no 4 distinct elements $\{A,B,C,D\} \subseteq \cc$ such that $A \cup B = C \cup D$ (see \cite{ff}). Let $C(n)$ is the maximum size of such a family. Frankl and F{\"u}redi \cite{ff} have used probabilistic methods and information theory to introduce the following bounds:
\[2^{(n-\log 3)/3} -2 \leq C(n)\leq 2^{(3n+2)/4} \sim 2^{1/2}\cdot 1.68^n\]
Since then the upper bound has been improved by Coppersmith and Shearer \cite{c} to
\[C(n)\leq 2^{[0.5 +o(1)]n}.\]

In this section, we present a packing threshold version of this question in Theorem~\ref{WPT} below,
in which we are ``packing'' a family of sets until whp or wlp each {\em union} is ``covered'' at most once. 

\begin{thm}\label{WPT}
Let $\cc\subseteq \mathcal{P}([n])$ be a random subset of $\mathcal{P}([n])$ in which each element of $\mathcal{P}([n])$ is selected for membership in $\cc$ with probability $p $. Let $X$ denote the number of distinct quadruples   $\{A,B,C,D\}\subseteq \cc$ such that $A \cup B = C \cup D$. Notice that $X = 0$ if and only if $\cc$ is weakly union-free.  We have:
 
\[
p \ll \left(\frac{1}{10}\right)^{n/4} \Rightarrow \p(X = 0) \rightarrow 1,
\]
and
\[ 
p \gg \left(\frac{1}{10}\right)^{n/4} \Rightarrow \p(X = 0) \to 0.\]
\end{thm}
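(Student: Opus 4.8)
The plan is to split the statement into its two halves and attack them with the two tools already used in the paper: a first--moment (Markov) estimate for the ``$\p(X=0)\to 1$'' direction, and Janson's inequality --- in its extended form --- for the ``$\p(X=0)\to 0$'' direction. Essentially all of the work is a careful enumeration of the relevant configurations, organized around the \emph{common union set}.

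For the wlp direction I would first fix terminology. Call a \emph{configuration} an unordered pair $\{\{A,B\},\{C,D\}\}$ of distinct unordered pairs of distinct subsets of $[n]$, with all four sets distinct and $A\cup B=C\cup D$; let $\mathcal Q$ be the set of all of them, so that $X=0$ exactly when no member of $\mathcal Q$ lies inside $\cc$, and $\e(X)=|\mathcal Q|\,p^{4}$. To count $|\mathcal Q|$ I would group by $U=A\cup B$: for $|U|=u$ there are $3^{u}$ ordered pairs $(A,B)$ with union $U$ (each element of $U$ is in $A\setminus B$, $B\setminus A$, or $A\cap B$), hence $9^{u}$ ordered quadruples $(A,B,C,D)$ with $A\cup B=C\cup D=U$, and $\sum_{u}\binom{n}{u}9^{u}=10^{n}$; quadruples with a repeated set contribute only $O(6^{n})$, and each configuration comes from $8$ ordered quadruples, so $|\mathcal Q|=\tfrac18 10^{n}(1+o(1))$. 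Then Markov's inequality gives $\p(X\ge1)\le\e(X)=\Theta\!\big((10^{n/4}p)^{4}\big)$, which $\to0$ precisely when $p\ll(1/10)^{n/4}$.

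For the whp direction, set $\mu=\e(X)=\Theta((10^{n/4}p)^{4})$, so $p\gg(1/10)^{n/4}$ already forces $\mu\to\infty$; for $\sigma\in\mathcal Q$ let $B_\sigma$ be the increasing event that all four sets of $\sigma$ lie in $\cc$, and write $\sigma\sim\tau$ when $\sigma\ne\tau$ but they share at least one set, $\Delta=\sum_{\sigma\sim\tau}p^{|\sigma\cup\tau|}$. By Janson's inequality and its extension (Theorems~8.1.1 and 8.1.2 of \cite{as}), $\p(X=0)\le\max\{e^{-\mu+\Delta/2},\,e^{-\mu^{2}/2\Delta}\}$, so it suffices to show $\Delta=o(\mu^{2})=o(10^{2n}p^{8})$. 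I would write $\Delta=\sum_{j=4}^{7}D_{j}p^{j}$, where $D_{j}$ counts ordered pairs $(\sigma,\tau)$ with $\sigma\sim\tau$ and $|\sigma\cup\tau|=j$; since $p^{8-j}\gg 10^{-(8-j)n/4}$, the whole thing reduces to the four estimates $D_{4}=O(10^{n})$, $D_{5}=O(16^{n})$, $D_{6}=O(28^{n})$, $D_{7}=O(52^{n})$, which are decisive because $16<10^{5/4}$, $28<10^{3/2}$, $52<10^{7/4}$ (and $D_{4}=O(10^{n})$ needs only $\mu\to\infty$). Each of these is proved by counting, for a fixed shared sub-family $\mathcal S$, the number of configurations \emph{containing} $\mathcal S$ and summing over $\mathcal S$ grouped by its union: e.g. $D_{7}\le\sum_{S}N_{S}^{2}$ with $N_{S}=\#\{\sigma\ni S\}=\Theta(4^{n}(3/2)^{|S|})$, so $\sum_{S}N_{S}^{2}=\Theta\!\big(16^{n}(13/4)^{n}\big)=\Theta(52^{n})$; for $D_{5}$ one gets $\#\{\sigma\supseteq\{P,Q,R\}\}=O([P\subseteq Q\cup R]2^{|P|}+\cdots)$ and $\sum_{P,Q,R}[P\subseteq Q\cup R]4^{|P|}=\sum_{Q,R}5^{|Q\cup R|}=\sum_{U}15^{|U|}=16^{n}$; $D_{6}$ is handled similarly with base $\sum_{U}27^{|U|}=28^{n}$; and $D_{4}\le 6|\mathcal Q|$ because two configurations on the same four sets differ only in the pairing.

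The step I expect to be the main obstacle is exactly the bound on $\Delta$. The naive route --- fix $\sigma$, then bound the number of $\tau$'s that overlap it --- is too lossy: it gives only $D_{5}=O(20^{n})$ and $D_{6}=O(40^{n})$, and since $20>10^{5/4}$ and $40>10^{3/2}$ this does not close. What rescues the argument is that every configuration carries one union equation, so one must count a shared sub-family by how many configurations \emph{contain} it, with the summation organized around the common union set rather than around $\sigma$; this is what pulls the exponential bases down to $16,28,52$, each of which lies just below $10^{5/4},10^{3/2},10^{7/4}$. One should also double-check the degenerate-quadruple estimate and the ``eight orderings per configuration'' bookkeeping in the first-moment step, but those are routine. (If one prefers to avoid the extended Janson inequality, one can instead work in the range $(1/10)^{n/4}\ll p\ll(10/52)^{n/3}$, where ordinary Janson applied to one indicator per bad $4$-set already suffices, and then extend to larger $p$ by the obvious monotonicity of ``$\cc$ is weakly union-free'' in $p$.)
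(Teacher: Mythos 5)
Your proof follows essentially the same route as the paper: Markov's inequality with $|\mathcal{B}|=\frac{1}{8}10^n(1+o(1))$ for the easy direction, and Janson's inequality driven by the identical shared-union overlap estimates $O(16^n)$, $O(28^n)$, $O(52^n)$ for $|\mathcal{D}(5)|$, $|\mathcal{D}(6)|$, $|\mathcal{D}(7)|$ for the hard direction. The only variations are that you track pairings rather than $4$-element sets (adding a harmless $D_4=O(10^n)$ term) and deploy the extended Janson inequality to cover all $p\gg 10^{-n/4}$ at once via $\Delta=o(\mu^2)$ --- correct in substance, though the combined bound should be written $\max\{e^{-\mu/2},e^{-\mu^2/(2\Delta)}\}$ rather than with $e^{-\mu+\Delta/2}$ --- whereas the paper restricts to $10^{-n/4}\ll p\ll 52^{-n/7}$ where $\Delta\to0$ and then appeals to monotonicity, an alternative you also note.
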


\begin{proof}
Let $\mathcal{B}: = \{\{A,B,C,D\} \text{ distinct }: A \cup B = C \cup D \}$. Clearly, $X = 0$ (e.g., $\cc$ is  weakly union-free) if and only if no member of $\mathcal{B}$ is a subset of $\cc$. We first count the size of $\mathcal{B}$. Let $U$ be some $k$-set ($1 \leq k \leq n$).  Given  $f:U \rightarrow \{0,1,2\}$, we say $f$ {\em uniquely determines} the ordered pair of sets $(R, S)$  if the following holds:

\[
\text{ for } x \in U  \begin{cases} & x \in R \setminus S \text{ if } f(x) = 0 \\ & x \in S \setminus R \text{ if } f(x) = 1\\& x \in R \cap S \text{ if } f(x) = 2 \end{cases}  
\]

Notice that each map $f:U \rightarrow \{0,1,2\}$  uniquely determines some pair  $(R,S)$. We say $f$ {\em determines} the unordered pair of sets $\{R,S\}$ if $f$ uniquely determines either of $(R,S)$ or $(S,R)$.  For $4 \leq k \leq n$ there are $\binom{n}{k}$ ways to pick $U$, which we will  write as a union in two different ways. There are $\frac{3^k -3}{2}$ non-constant determining maps which determine distinct sets, 
and so there are $\binom{\frac{3^k -3}{2}}{2}$ ways to determine  $A$, $B$, $C$, and $D$ so that $A \cup B = C \cup D$. Hence we have:

\begin{align*}
|\mathcal{B}| &= \sum_{k=3}^{n} \binom{n}{k}\binom{\frac{3^k -3}{2}}{2}\\
& =  \frac{1}{8} \sum_{k=3}^{n} \binom{n}{k} 9^k (1+ o(1))\\
& = \frac{1}{8} 10^n (1+o(1)).\numberthis \label{Badas}
\end{align*}

We now prove the easy first half of the theorem using Markov's inequality. For any $\{A,B,C,D\}\in \mathcal{B}$ define $I_{A,B,C,D}$ to be the indicator random variable which is $1$ if $\{A,B,C,D\} \subseteq \cc$ and $0$ otherwise so that:

\[
X = \sum_{\{A,B,C,D\} \in \mathcal{B}} I_{A,B,C,D}
\]

By Markov's inequality, we have:

\begin{align*}
\p(X \geq 1) &\leq \e(X)\\
& = \sum_{\{A,B,C,D\} \in \mathcal{B}} \p(I_{A,B,C,D}=1)\\
&= \frac{1}{8} p^4 10^n (1+o(1)) 
\end{align*}
Hence, if $p \ll \left(\frac{1}{10}\right)^{n/4}$, we have $\p(X = 0) \rightarrow 1$, as desired. 

 To see the second half, we turn to Janson's inequality as in the proof of Theorem~\ref{Sidon}.  To this end, we define a relation $\sim$ on $\mathcal{B}$ such that for $R, S \in \mathcal{B}$, we have $R \sim S$ if and only if $R \neq S$, and $R \cap S \neq \emptyset$. Janson's inequality yields:
 
 \[
 \p(X = 0) \leq \left( \Pi_{\{A, B, C, D\}\in \mathcal{B}} \p(I_{A,B,C,D}=0)\right)\exp(\Delta)\numberthis \label{JanWuf1}
 \]
 
 where, $\Delta$ is given by:
 
 \[
 \Delta:= \sum_{\{A,B,C,D\}\sim\{E,F,G,H\}}\p(I_{A,B,C,D} I_{E,F,G,G} =1)
 \]
 
 For each $\ell$, we partition $\Delta$ into classes, given by:
 
 \[
 \mathcal{D}(\ell) := \{(R,S)\in \mathcal{B}^2: |R\cup S|=\ell\}
 \]

 Note that taking $5 \leq \ell \leq 7$ partitions all of $\Delta$. We have the following claims about the size of each of the $\mathcal{D}(\ell)$. 
 
 \begin{claim}\label{del5}
 \begin{align*}
 |\mathcal{D}(5)| &\leq 2\sum_{i=0}^n \binom{n}{i}\left(2^i\right)^2\sum_{j=0}^{n-i}\binom{n-i}{j}3^{i+j}\\
  &= O(16^n).
 \end{align*}
 \end{claim}

\begin{claim}\label{del6}
 \begin{align*}
 |\mathcal{D}(6)|& \leq 2\sum_{i=0}^n \binom{n}{i} 2^i \sum_{j=0}^{n-i} \left(3^{i+j}\right)^2\\
 & =O(28^n). 
 \end{align*}

\end{claim}

\begin{claim}\label{del7}

\begin{align*}
|\mathcal{D}(7)|& \leq \sum_{i=0}^n \binom{n}{i}\left(2^i\right)^2 \sum_{j=0}^{n-i} \binom{n-i}{j} 3^{i+j} \sum_{k=0}^{n-i} \binom{n-i}{k} 3^{i+k}\\
& = O(52^n). 
\end{align*}
\end{claim}

\begin{proof}[Proof of Claim~\ref{del5}]

By definition,   $(\{A,B,C,D\} ,\{E,F,G,H\}) \in \mathcal{D}(5)$ means that $|\{A,B,C,D\} \cup\{E,F,G,H\}| = 5$, (i.e., there are only 5 distinct sets present). Hence, up to relabeling, we can call these sets $\{A,B,C,D\}$ and $\{A,B,C,E\}$. Further, each of $\{A,B,C,D\}\in\mathcal{B}$ and $\{E,F,G,H\}\in\mathcal{B}$, and so they each serve as obstacles to the weakly union-free condition. We assume without loss of generality that $A \cup B = C \cup D$ always. Up to relabeling, we can partition $\mathcal{D}(5)$ into two families:

\[
\mathcal{D}_1(5) := \{(\{A,B,C,D\},\{A,B,C,E\}): A \cup B = C \cup D; A \cup B = C \cup E\}
\]  

or

\[
\mathcal{D}_2(5) := \{(\{A,B,C,D\},\{A,B,C,E\}):A \cup B = C \cup D; A \cup C = B \cup E\}
\]

We will provide an upper bound on each $\mathcal{D}_1(5)$ and $\mathcal{D}_2(5)$. To count the number of members of $\mathcal{D}_1(5)$,  suppose first that  $|C| = i$. There are $\binom{n}{i}$ ways to pick $C$. We  pick two subsets of $C$ (which will serve as $C \cap D$ and $C \cap E$) in at most $\left(2^i\right)^2$ ways. Suppose that $C \cup D$ has size $i + j$. We pick the remaining elements of $D$ in $\binom{n-i}{j}$ ways. We have now completely determined the sets $C$ and $D$, and thus $C\cup D$. The first equation reveals that we have also determined $A \cup B$. There are at most $3^{i+j}$ ways to pick a determining map to pick sets $A$ and $B$. Finally, since we know $A \cup B$, the second equation reveals that we know $C \cup E$. We have also determined $C$ and $C \cap E$, and so we have determined $E$.  Summing over $i$ and $j$ we have:

\begin{align*}
|\mathcal{D}_1(5)| &\leq \sum_{i=0}^n \binom{n}{i} \left(2^i\right)^2 \sum_{j=0}^{n-i} \binom{n-i}{j} 3^{i+j}\\
& = \sum_{i=0}^n \binom{n}{i} 4^i 3^i \sum_{j=0}^{n-i} \binom{n-i}{j} 3^{j}\\
& = \sum_{i=0}^n \binom{n}{i} 12^i4^{n-i}\\
& = O(16^n),
\end{align*}
as desired. To count the number of members of $\mathcal{D}_2(5)$,  suppose first that  $|B| = i$. There are $\binom{n}{i}$ ways to pick $B$. We  pick two subsets of $B$ (which will serve as $B \cap A$ and $B\cap E$) in at most $\left(2^i\right)^2$ ways.  Suppose that $B \cup A$ has size $i + j$. We pick the remaining elements of $A$ in $\binom{n-i}{j}$ ways. We have now completely determined the sets $B$ and $A$, and thus $C\cup D$. There are at most $3^{i+j}$ ways to pick a determining map to pick sets $C$ and $D$. Since we have determined $A$ as well as $C$, we have determined $A \cup C$, as well as $B \cup E$. We also have determined $B$ and $B \cap E$, and so $E$ is determined. While the procedure was different, summing over $i$ and $j$ we have 

\begin{align*}
|\mathcal{D}_2(5)| &\leq \sum_{i=0}^n \binom{n}{i} \left(2^i\right)^2 \sum_{j=0}^{n-i} \binom{n-i}{j} 3^{i+j}\\
& = O(16^n).
\end{align*}

Hence $\mathcal{D}(5) \leq |\mathcal{D}_1(5)|+ |\mathcal{D}_2(5)| = O(16^n)$, as desired.

\end{proof}

The computations for Claim~\ref{del6} and Claim~\ref{del7} are derived similarly.  

By Claims~\ref{del5},~\ref{del6}, and~\ref{del7}, there exist a constant $C$ so that:
\begin{eqnarray}
\Delta &\leq |\mathcal{D}(5)|p^5 + |\mathcal{D}(6)|p^6 + |\mathcal{D}(7)|p^7\nonumber\\
& \leq C (16^n p^5 + 28^n p^6 + 52^n p^7). \label{tozero1}
\end{eqnarray}
Plugging (\ref{tozero1}) into \eqref{JanWuf1}, along with the asymptotics obtained for $|\mathcal{B}|$ in \eqref{Badas}, we have:

\begin{align*}
\p(X = 0) &\leq \left( \Pi_{\{A, B, C, D\}\in \mathcal{B}} \p(I_{A,B,C,D}=0)\right)\exp(\Delta)\\
& \leq (1-p^4)^{\frac{1}{8} 10^n (1+o(1)} \exp \lc C (16^n p^5 + 28^n p^6 + 52^n p^7)\rc\\
& \leq \exp\lc  -\frac{10^n}{8}p^4(1+o(1) + C (16^n p^5 + 28^n p^6 + 52^n p^7)\rc. \numberthis \label{JanFin}
\end{align*}
We remark that 

\[
\left(\frac{1}{10}\right)^{n/4} \ll \left(\frac{1}{52}\right)^{n/7} \ll \left(\frac{1}{28}\right)^{n/6}\ll \left(\frac{1}{16}\right)^{n/5}
\]
and so chosing $\left(\frac{1}{10}\right)^{n/4} \ll p \ll  \left(\frac{1}{52}\right)^{n/7}$ sends the expression in \eqref{JanFin} (and hence $\p(X =0)$) to $0$. As before, we are done by monotonicity.

\end{proof}

\section{Future Directions}  There are many directions for further work.  We list below some of these.
\begin{itemize}
\item New structures for which the ``threshold progressions" idea can be furthered and completed include well-studied areas such as graph connectivity (see the results in \cite{bo}) or cover times for graphs (\cite{pe});
\item There seems to be adequate justification for studying results for coverage of objects in ${\cal A}$ between $s$ and $t$ times by objects in ${\cal C}$; 
\item Section 3: Generalizing Theorem 3.3 to $h\ge 2$ would be of interest, and combating the overlaps between components of sums of two integers would be the primary technical challenge;
\item Section 3: This open problem is related to an original question of Sidon; see, e.g., \cite{o}.  It has been suggested by Kevin O'Bryant.  Sidon's original question was ``How thick can a set $A\subseteq{\mathbb Z}^+$ be if 
\[\sigma(n)=\vert\{(a,b): a+b\in A; a+b=n\}\vert\]
and
\[\delta(n)=\vert\{(a,b): a-b\in A; a-b=n\}\vert\]
satisfy, for each $n$, $\sigma(n)+\delta(n)\le g$."  Note that in this ordered set format, Sidon sets are those for which $\sigma(n)\le 2$ for each $n$.  It is easy to verify that $\sigma(n)\le 2\ {\rm iff}\ \delta(n)\le 1$.  But if $\sigma(n)\le 4$ then it is still possible for $\delta(n)$ to be unbounded.  Sidon's original question has not been the subject of a large-scale investigation.  In our context, however, we might ask for thresholds for the property $\sigma(n)+\delta(n)\le g$.

\item Section 4:  There is a large gap between the thresholds in Theorems 4.1 and 4.6.  Are we asking the right question?
\item Section 5: This section is most in need of development.  What about unions of three or more sets?  Disjoint unions?  Analogous results in the ``at least $g$-sets" genre, to mirror the extremal results in \cite {su}?

\end{itemize}

\section{Acknowledgements}  The research of all four authors was supported by NSF Grant DMS-1263009.

\end{document}